\newcommand{\old}[1]{}
\theoremstyle{plain}
\newtheorem{thm}{Theorem}[section]
\newtheorem{lem}[thm]{Lemma}
\newtheorem{conj}{Conjecture}
\newtheorem{cor}[thm]{Corollary}
\newtheorem{prop}[thm]{Proposition}
\theoremstyle{definition}
\newtheorem{defn}[thm]{Definition}
\newtheorem{ex}[thm]{Example}
\newtheorem{rk}[thm]{Remark}
\newtheorem{qn}[thm]{Question}
\numberwithin{equation}{section}
\title{Prime power variations of higher $Lie_n$ modules}
\author{Sheila Sundaram}
\address{Pierrepont School, One Sylvan Road North, Westport, CT 06880}
\email{shsund@comcast.net}
\date{\today}
\subjclass[2010]{05E10, 20C30}
\begin{document}
\begin{abstract}
We define, for each subset $S$ of the set $\mathcal{P}$  of primes, an $S_n$-module $Lie_n^S$ 
with interesting properties.  $Lie_n^\emptyset$ is the well-known representation $Lie_n$ of $S_n$ afforded by the free Lie algebra, while  $Lie_n^\mathcal{P}$ is the module $C\!onj_n$ of the conjugacy action of $S_n$ on $n$-cycles.  
 For arbitrary $S$ the module $Lie_n^{S}$ 
  interpolates between the representations $Lie_n$  and  $C\!onj_n.$  We consider the  symmetric and exterior powers of  $Lie_n^S.$ These are  the analogues of the higher Lie modules of Thrall. We show that the Frobenius characteristic of these higher $Lie_n^S$ modules can be elegantly expressed as a multiplicity-free sum of power sums. In particular this establishes the Schur positivity of new classes of  sums of power sums. 

 More generally, for each  nonempty subset $T$ of positive integers 
 we define a sequence of symmetric functions $f_n^T$ of homogeneous degree $n.$ We show that  the series $\sum_{\lambda, \lambda_i\in T} p_\lambda$ can be expressed as symmetrised powers of the functions $f_n^T$, analogous to the higher Lie modules first defined by Thrall.
 This in turn allows us to unify previous results on the Schur positivity of multiplicity-free sums of power sums, as well as investigate new ones.  We also uncover some curious plethystic relationships between $f_n^T$, the conjugacy action and the Lie representation.

\emph{Keywords:}   Thrall, higher Lie modules, Schur positivity, symmetric power, exterior power, plethysm.
\end{abstract}
\maketitle

\section{Introduction}

For each irreducible character $\chi^\lambda$ of the symmetric group of $S_n,$  indexed by a partition $\lambda$ of $n,$ 
and any subset $T$ of the conjugacy classes, one can form the sum $\sum_{\mu\in T} \chi^\lambda(\mu),$ and ask when this sum is nonnegative.   
In the language of symmetric functions, one asks for what subsets $T$ of partitions of $n$ the sum of power sums $\sum_{\mu\in T} p_\mu $ is the Frobenius characteristic of a true representation of $S_n,$ i.e. a \textit{Schur positive} symmetric function,  one with nonnegative (and in this case, necessarily integer) coefficients in the basis of Schur functions.  
A  method for generating such  classes of subsets $T$ was presented in \cite{Su1}.  This paper  generalises and extends these results with classes indexed by subsets of primes (Theorems 3.6 and 3.9).  We show that such sums of power sums are in fact realised by  analogues of the higher $Lie$ modules defined by Thrall \cite{T}, where the representation $Lie$ is replaced by an appropriate variant. In  Section~\ref{SecPrimes} we define certain $S_n$-modules $Lie_n^S$ indexed by subsets $S$ of primes, and induced  from the cyclic subgroup  generated by an $n$-cycle,  and show that their symmetric powers 
can be expressed as the product $\prod_{n\in P( S)} (1-p_n)^{-1},$  for an appropriate subset $P(S)$ determined by $S.$   This in effect gives a compact and elegant formula for the \textit{higher} $Lie_n^S$-module as a sum of power sums, thereby establishing Schur positivity of the latter. 
When $S$ consists of the single prime 2, the module $Lie_n^{(2)}$ was shown to have  properties that are remarkably analogous to virtually every known property of the representation $Lie_n$ of $S_n$ on the multilinear component of the free Lie algebra    {\cite{SuLieSup2}}. It is the discovery and construction of $Lie_n^{(2)}$  that led to the generalisations  of the present paper.  The interesting choices of $S$ other than $Lie^{(2)}$ are recorded  in   Theorems  \ref{SymAltSymLS}-\ref{Spositivity1} and  \ref{Spositivity2}-\ref{OnePrime}.  These results  were announced in \cite{SuFPSAC2018}. 

 Another approach to the Schur positivity problem for power sums is developed in Section~\ref{SecfnT}.  We give a general formula which expresses the product $\prod_{n\in T} (1-p_n)^{-1}$ as a symmetrised module, the analogue of a higher Lie module,  over a sequence of  possibly virtual representations $f_n^T,$ with the  property that their characters vanish unless the conjugacy class has all cycles of equal length.  The goal then is to determine for what choices of the set $T$ the $f_n^T$ are Frobenius characteristics of true 
$S_n$-modules, thereby establishing the Schur positivity of the 
product $\prod_{n\in T} (1-p_n)^{-1}.$    The module $Lie_n$ plays a prominent role in the construction.
In the course of these calculations many interesting plethystic identities emerge, as well as many  new conjectures on Schur positivity.  
The paper \cite{Su1} develops machinery which greatly facilitates and unifies the plethystic computations involved in finding symmetric and exterior powers, or equivalently, the higher Lie analogues, of a certain class of $S_n$-modules. These tools  are summarised in Section~\ref{SecPrelim}.
  We conclude in Section~\ref{SecPleth} with a summary of plethystic inverse identities that have frequently come up in different homological contexts, and some open problems.

\section{Preliminaries}\label{SecPrelim}
Recall \cite{R} that the $S_n$-module $Lie_n$ is the action of $S_n$ on the multilinear component of the free Lie algebra, and coincides with the  induced representation $\exp(\frac{2i\pi}{n})\uparrow_{C_n}^{S_n},$ where $C_n$ is the cyclic group generated by an $n$-cycle in $S_n$.  

Another module that will be of interest is the $S_n$-module $C\!onj_n$ afforded by the conjugacy action of $S_n$ on the class of $n$-cycles.  Clearly we have $C\!onj_n\simeq {\mathbf 1}\uparrow_{C_n}^{S_n}.$

We follow \cite{M} and \cite{St4EC2} for notation regarding symmetric functions.  In particular, $h_n,$ $e_n$ and $p_n$ denote respectively the complete homogeneous, elementary and power sum symmetric functions.  If $\mathrm{ch}$ is the Frobenius characteristic map from the representation ring of the symmetric group $S_n$ to the ring of symmetric functions with real coefficients, then 
$h_n=\mathrm{ch}(1_{S_n})$ is the characteristic of the trivial representation, and $e_n=\mathrm{ch}({\rm sgn}_{S_n})$ is the characteristic of the sign representation of $S_n.$   If $\mu$ is a partition of $n$ then 
define $p_\mu =\prod_i p_{\mu_i};$ $h_\mu$ and $e_\mu$ are defined multiplicatively in analogous fashion.  The Schur function $s_\mu$  indexed by the partition $\mu$ is the Frobenius characteristic of the $S_n$-irreducible indexed by $\mu.$  
  Finally, $\omega$ is the involution on the ring of symmetric functions which takes $h_n$ to $e_n,$ corresponding to tensoring with the sign representation.
  
  If $q$ and $r$ are characteristics of representations of $S_m$ and $S_n$ respectively, they yield a representation of the wreath product $S_m[S_n]$ in a natural way, with the property that when this representation is induced up to $S_{mn},$ its Frobenius characteristic is the plethysm $q[r].$ For more background about this operation, see \cite{M}.  We will make extensive use of the properties of this operation, in particular the fact that plethysm with a symmetric function $r$ is an endomorphism on the ring of symmetric functions \cite[(8.3)]{M}.  See also \cite[Chapter 7, Appendix 2, A2.6]{St4EC2}.

Define \begin{align}\label{defHE} &H(t):=\sum_{i\geq 0}t^i  h_i=\exp \sum_{i\ge 1}\frac{t^ip_i}{i}, \quad E(t) := \sum_{i\geq 0} t^i  e_i=\exp \sum_{i\ge 1}(-1)^{i-1}\frac{t^ip_i}{i}; \\
&H:=\sum_{i\geq 0}  h_i, \quad E:= \sum_{i\geq 0}  e_i; \quad H^{\pm}:=\sum_{r\geq 0} (-1)^r h_r, \quad E^{\pm}:=\sum_{r\geq 0} (-1)^r e_r. 
\end{align}

Now let $\{q_i\}_{i\geq 1}$ be a sequence of symmetric functions, each $q_i$ homogeneous of degree $i.$    
Let $Q:=\sum_{i\geq 1}q_i,$  $Q(t):=\sum_{n\geq 1} t^n q_n.$
For each partition $\lambda$ of $n\geq 1$  with $m_i(\lambda)=m_i$ parts equal to $i\geq 1,$ let $|\lambda|=n = \sum_{i\geq 1} i m_i$ be the size of $\lambda,$ and 
$\ell(\lambda)=\sum_{i\geq 1} m_i(\lambda)=\sum_{i\geq 1} m_i$ be the length (total number of parts) of $\lambda.$ 


Define 
\begin{equation}\label{HigherQ}
 H_\lambda[Q] := \prod_{i:m_i(\lambda)\geq 1} h_{m_i}[q_i],\qquad  \qquad E_\lambda[Q] :=\prod_{i:m_i(\lambda)\geq 1} e_{m_i}[q_i].
 \end{equation}

For the empty partition (of zero) we define $H_\emptyset [Q]=1=
E_\emptyset[Q]=H^{\pm}_\emptyset [Q]=
E^{\pm}_\emptyset[Q].$

 Consider the generating functions 
$H[Q](t)$ and $E[Q](t).$ 
With the convention that $Par$, the set of all partitions of nonnegative integers,
 includes the unique empty partition of zero,  by the preceding observations and standard properties of plethysm \cite{M} we have 
 
 \begin{align}\label{defhrofQ} &h_r[Q]|_{{\rm deg\ }n}=\sum_{\stackrel {\lambda\vdash n}{\ell(\lambda)=r}} H_\lambda[Q], 
 \qquad \text{and } \quad 
 e_r[Q]|_{{\rm deg\ }n}=\sum_{\stackrel{\lambda\vdash n}{\ell(\lambda)=r}}E_\lambda[Q];\\
&H[Q](t)=\sum_{\lambda\in Par} t^{|\lambda|} H_\lambda[Q], \qquad \text{and } \quad E[Q](t)=\sum_{\lambda\in Par} t^{|\lambda|} E_\lambda[Q].\end{align}

We will also let $D\!Par$ denote the subset of $Par$ consisting of the empty partition and all partitions with distinct parts.

Also write $Q^{alt}(t)$ for the alternating sum $\sum_{n\geq 1} (-1)^{i-1} t^i q_i = t q_1-t^2 q_2+t^3 q_3-\ldots.$  

Let $\psi(n)$ be any real-valued function defined on the positive integers. 
Define symmetric functions $f_n$ by 
 \begin{equation}\label{definef_n}f_n := \dfrac{1}{n} \sum_{d|n} \psi(d) p_d^{\frac{n}{d}}, \quad
\text{so that} \quad  \omega(f_n) =  \dfrac{1}{n} \sum_{d|n} \psi(d) (-1)^{n-\frac{n}{d}} p_d^{\frac{n}{d}}.\end{equation}
Note that,  when $\psi(1)$ is a positive integer,  this makes $f_n$ the Frobenius characteristic of a possibly virtual $S_n$-module whose dimension is $(n-1)!\psi(1).$

For each symmetric function $f_n$ of the form~\eqref{definef_n}, define the associated polynomial $\tilde f_n(t)$ in one variable, $t,$ by
\begin{equation}\label{definepolyf_n}
\tilde f_n(t) :=\dfrac{1}{n} \sum_{d|n} \psi(d) t^{\frac{n}{d}}.
\end{equation}

By abuse of notation we will also write $Lie_n$ (resp. $C\!onj_n$) to mean the Frobenius characteristic of the representation $Lie_n$ (resp. $C\!onj_n$).  Let $\mu(d)$ denote the number-theoretic M\"obius function, and $\phi(d)$ the Euler totient function.  The following facts are well known (see \cite{R}). 
\begin{equation}\label{LieConj}Lie_n={\rm \, ch\,} \exp(\tfrac{2i\pi}{n})\!\uparrow_{C_n}^{S_n} = \frac{1}{n}\sum_{d|n} \mu(d) p_d^{\frac{n}{d}};\ 
C\!onj_n={\rm \, ch\,} \mathbf{1}\!\uparrow_{C_n}^{S_n}=\frac{1}{n}\sum_{d|n} \phi(d) p_d^{\frac{n}{d}}
\end{equation}
Finally we define the series
 \begin{equation} \quad Lie:=\sum_{i\geq 1} Lie_i; \quad C\!onj:=\sum_{i\geq 1}C\!onj_i.
\end{equation} 
Note that the equations ~\eqref{HigherQ} give, for $Q=Lie,$   the higher Lie modules $H_\lambda[Lie]$ (as defined by Thrall in \cite{T})  and $E_\lambda[Lie].$

  The important tools used to establish the results of this paper are the theorems of \cite{Su1}, which give uniform formulas for symmetric and exterior powers of modules induced from centralisers.  
Recall the following results regarding the sequence of symmetric functions $f_n$ defined in equation (\ref{definef_n}):
\begin{prop}\label{Su1Prop3.1}\cite[Proposition 3.1]{Su1}  Define $F(t):=\sum_{i\geq 1} t^i f_i,$   and define 
$(\omega F)^{alt}(t):=\sum_{i\geq 1} (-1)^{i-1} t^i\omega(f_i).  $  Then 
\begin{align} &F(t)= \log \prod_{d\geq 1} (1-t^d p_d)^{-\frac{\psi(d)}{d}}\\ 
&(\omega F)^{alt}(t)= \log \prod_{d\geq 1} (1+t^d p_d)^{\frac{\psi(d)}{d}}\end{align}
\end{prop}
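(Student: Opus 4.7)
The plan is to expand the logarithms of the two products on the right-hand side as formal power series in $t$ (with coefficients in the ring of symmetric functions), and show that reindexing by $n = dk$ recovers exactly the coefficient of $t^n$ in $F(t)$ and $(\omega F)^{alt}(t)$ respectively. No deep machinery is needed: the identities are at heart a manipulation of the classical series $-\log(1-x) = \sum_{k\geq 1} x^k/k$ and $\log(1+x) = \sum_{k\geq 1}(-1)^{k-1} x^k/k$, combined with the rule $\omega(p_d) = (-1)^{d-1} p_d$.

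For the first identity, I would begin by writing
\begin{equation*}
\log\prod_{d\geq 1}(1-t^d p_d)^{-\psi(d)/d} = \sum_{d\geq 1}\frac{\psi(d)}{d}\sum_{k\geq 1}\frac{(t^d p_d)^k}{k} = \sum_{d\geq 1}\sum_{k\geq 1}\frac{\psi(d)}{dk}\, t^{dk}\, p_d^{\, k}.
\end{equation*}
Grouping terms by $n := dk$, the exponent on $t$ is $n$, the denominator becomes $dk = n$, and $k = n/d$, so the coefficient of $t^n$ is $\tfrac{1}{n}\sum_{d\mid n}\psi(d)\, p_d^{\, n/d} = f_n$. This gives $F(t)$ on the nose.

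For the second identity, the same reindexing on $\log \prod_{d\geq 1}(1+t^d p_d)^{\psi(d)/d}$ produces the coefficient
\begin{equation*}
\frac{1}{n}\sum_{d\mid n}\psi(d)\,(-1)^{n/d-1}\, p_d^{\, n/d}
\end{equation*}
on $t^n$. I would then compute $(-1)^{n-1}\omega(f_n)$ directly from the definition in \eqref{definef_n}, noting $\omega(p_d^{\, n/d}) = (-1)^{(d-1)(n/d)} p_d^{\, n/d} = (-1)^{n - n/d} p_d^{\, n/d}$. Multiplying by $(-1)^{n-1}$ contributes total sign $(-1)^{2n-1-n/d} = (-1)^{n/d-1}$, matching the coefficient obtained from the product. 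Summing over $n$ gives $(\omega F)^{alt}(t)$.

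The only place that requires care is the sign bookkeeping in the second identity; once one correctly tracks that $\omega$ applied to $p_d^{\, n/d}$ yields the exponent $n - n/d$ of $-1$, and that the alternating factor $(-1)^{n-1}$ in the definition of $(\omega F)^{alt}(t)$ combines with it to leave $(-1)^{n/d-1}$, the two sides agree termwise. Since each step is a formal identity of generating series, no convergence or additional combinatorial argument is needed.
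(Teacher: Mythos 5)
Your proof is correct: the term-by-term expansion of the logarithms, the reindexing $n=dk$, and the sign bookkeeping $\omega(p_d^{\,n/d})=(-1)^{n-n/d}p_d^{\,n/d}$ combined with the alternating factor $(-1)^{n-1}$ all check out. The paper itself cites this as \cite[Proposition 3.1]{Su1} without reproducing a proof, and your direct generating-function computation is precisely the standard argument for this identity, so nothing further is needed.
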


\begin{thm}\label{metathm} \cite[Theorem 3.2]{Su1} Let $F=\sum_{n\geq 1}  f_n$ where the symmetric function $f_n$ is of the form (\ref{definef_n}), $H(v)=\sum_{n\geq 0} v^n h_n$ and 
$E(v)=\sum_{n\geq 0}  v^n e_n.$  
We have the following plethystic generating functions:
\begin{equation}\text{(Symmetric powers)  }\label{metaSym}H(v)[F] = \sum_{\lambda\in Par} v^{\ell(\lambda)}  H_\lambda[F]=\prod_{m\geq 1} (1-p_m)^ {-\tilde f_m(v)}\end{equation}
\begin{equation}\text{(Exterior powers)  }\label{metaExt}E(v)[F]=\sum_{\lambda\in Par} v^{\ell(\lambda)} E_\lambda[F] =\prod_{m\geq 1} (1- p_m)^{\tilde f_m(-v)}\end{equation}
\begin{center}$\sum_{\lambda\in Par}  (-1)^{|\lambda|-\ell(\lambda)}v^{\ell(\lambda)} \omega(E_\lambda[F])$ \text{ (Alternating exterior powers)  }\end{center}
\begin{equation}\label{metaAltExt}=\sum_{\lambda \in Par} v^{\ell(\lambda)}H_\lambda[\omega(F)^{alt}]
=H(v)[\omega(F)^{alt}]
= \prod_{m\geq 1} (1+ p_m)^{\tilde f_m(v)}\end{equation}
\begin{center}$\sum_{\lambda\in Par} (-1)^{|\lambda|-\ell(\lambda)}v^{\ell(\lambda)} \omega(H_\lambda[F])$ \text{ (Alternating symmetric powers)  }\end{center}
\begin{equation}\label{metaAltSym}=\sum_{\lambda \in Par} v^{\ell(\lambda)}E_\lambda[\omega(F)^{alt}]
=E(v)[\omega(F)^{alt}]
= \prod_{m\geq 1} (1+ p_m)^{-\tilde f_m(-v)}.\end{equation}
\end{thm}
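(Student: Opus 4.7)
The plan is to reduce all four generating-function identities to a single computation of $\sum_{k\ge 1}\frac{v^k}{k}p_k[F]$, up to sign changes, and then to recognise the result as a logarithm of one of the infinite products on the right-hand side. The bridge between plethysm and these products is the identity $H(v)=\exp\sum_{k\ge 1}\frac{v^k p_k}{k}$ (and its analogue for $E(v)$), together with the fact that plethysm with $p_k$ sends $p_d\mapsto p_{kd}$, so that $H(v)[F]=\exp\sum_{k\ge 1}\frac{v^k p_k[F]}{k}$ and likewise for $E(v)[F]$.

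I would first carry out the computation for (\ref{metaSym}). Using $f_n=\frac1n\sum_{d\mid n}\psi(d)p_d^{n/d}$, we obtain $p_k[F]=\sum_{d,e\ge 1}\frac{\psi(d)}{de}p_{kd}^e$, and the reindexing $m=kd$ collects the double sum so that
\begin{equation*}\sum_{k\ge 1}\frac{v^k}{k}p_k[F]=\sum_{m\ge 1}\Bigl(\tfrac1m\sum_{d\mid m}v^{m/d}\psi(d)\Bigr)\sum_{e\ge 1}\frac{p_m^e}{e}=-\sum_{m\ge 1}\tilde f_m(v)\log(1-p_m),\end{equation*}
which exponentiates to $\prod_m(1-p_m)^{-\tilde f_m(v)}$. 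Equation (\ref{metaExt}) follows verbatim using $E(v)=\exp\sum_k(-1)^{k-1}v^kp_k/k$: the extra factor $(-1)^{k-1}=(-1)^{m/d-1}$ converts $v^{m/d}$ into $-(-v)^{m/d}$, producing $-\tilde f_m(-v)$ and hence $\prod_m(1-p_m)^{\tilde f_m(-v)}$. For the product formulas on the right of (\ref{metaAltExt}) and (\ref{metaAltSym}), I would substitute $F\mapsto\omega(F)^{alt}$; by Proposition~\ref{Su1Prop3.1}, $\omega(F)^{alt}=\sum_{d,j\ge 1}\frac{(-1)^{j-1}\psi(d)}{dj}p_d^j$, and the identical reindexing now yields the inner sum $\sum_{j\ge 1}\frac{(-1)^{j-1}p_m^j}{j}=\log(1+p_m)$, giving the claimed products $\prod_m(1+p_m)^{\pm\tilde f_m(\pm v)}$.

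The remaining task is to verify the $\omega$-side equalities in (\ref{metaAltExt}) and (\ref{metaAltSym}), namely
\begin{equation*}(-1)^{|\lambda|-\ell(\lambda)}\omega(E_\lambda[F])=H_\lambda[\omega(F)^{alt}]\quad\text{and}\quad(-1)^{|\lambda|-\ell(\lambda)}\omega(H_\lambda[F])=E_\lambda[\omega(F)^{alt}]\end{equation*}
for every partition $\lambda$. This is a factor-by-factor check, using the standard plethysm rules $\omega(q[g])=(\omega q)[g]$ when $g$ is homogeneous of odd degree and $\omega(q[g])=q[\omega g]$ when $g$ is of even degree, together with $h_n[-g]=(-1)^n e_n[g]$ and $e_n[-g]=(-1)^n h_n[g]$. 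The sign $(-1)^{|\lambda|-\ell(\lambda)}=\prod_i(-1)^{m_i(i-1)}$ equals $1$ on odd-indexed factors and $(-1)^{m_i}$ on even-indexed factors, and for even $i$ the two sides match immediately. For odd $i$ the match reduces to the identity $\omega f_i=f_i$, which follows directly from (\ref{definef_n}): every divisor $d$ of an odd $i$ is odd, so $i-i/d$ is even and each sign $(-1)^{i-i/d}$ in the formula for $\omega f_i$ is $+1$. The main obstacle is precisely this sign bookkeeping in the alternating formulas; the core exponential calculation is a direct substitution.
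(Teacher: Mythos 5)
Your proof is correct and follows essentially the route of the source \cite{Su1} on which the paper relies for this theorem: the exponential formulas \eqref{defHE} for $H(v)$ and $E(v)$, the substitution $p_k\mapsto p_k[F]$ with the reindexing $m=kd$ producing $\tilde f_m(\pm v)$, Proposition \ref{Su1Prop3.1} for the $(1+p_m)$ products, and a factor-by-factor $\omega$/sign check for the alternating identities. One small caution: the general rule for an odd-degree inner argument is $\omega(q[g])=(\omega q)[\omega g]$, not $(\omega q)[g]$, but this is harmless in your argument since you correctly observe that $\omega f_i=f_i$ when $i$ is odd.
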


\begin{prop}\label{metaf} \cite[Lemma 3.3]{Su1} The numbers $\tilde f_n(1)$ and $\tilde f_n(-1)$ determine each other according to the equations 
$\tilde f_{2m+1}(-1)=-\tilde f_{2m+1}(1)$ for all $m\geq 0,$ and 
$\tilde f_{2m}(-1)=\tilde f_m(1) -\tilde f_{2m}(1)$ for all $m\geq 1.$  In fact, 
the symmetric functions 
$f_n = \dfrac{1}{n} \sum_{d|n} \psi(d) p_d^{\frac{n}{d}}$
are  determined by the numbers 
 $\tilde f_n(1)=\dfrac{1}{n} \sum_{d|n}\psi(d) ,$
or by the numbers $\tilde f_n(-1)=\dfrac{1}{n} \sum_{d|n}\psi(d) (-1)^{\frac{n}{d}}.$
\end{prop}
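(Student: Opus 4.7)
The plan is to establish the two parity relations by direct manipulation of the divisor sums defining $\tilde f_n(\pm 1)$, and then to use M\"obius inversion (for the sequence $\tilde f_n(1)$) together with the resulting recursive inversion (for $\tilde f_n(-1)$) to recover the function $\psi$, which in turn determines each $f_n$.

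For odd $n=2m+1$, every divisor $d$ of $n$ is odd and so is the quotient $n/d$, making $(-1)^{n/d}=-1$ for every term in the sum; hence $\tilde f_{2m+1}(-1)=-\frac{1}{2m+1}\sum_{d\mid 2m+1}\psi(d)=-\tilde f_{2m+1}(1)$. For even $n=2m$, I would compute
\begin{equation*}
\tilde f_{2m}(1)+\tilde f_{2m}(-1)=\frac{1}{2m}\sum_{d\mid 2m}\psi(d)\bigl(1+(-1)^{2m/d}\bigr)=\frac{1}{m}\sum_{\substack{d\mid 2m\\ 2m/d\text{ even}}}\psi(d).
\end{equation*}
The conditions $d\mid 2m$ and $2m/d$ even together are equivalent to $d\mid m$, so the right-hand side collapses to $\frac{1}{m}\sum_{d\mid m}\psi(d)=\tilde f_m(1)$, yielding $\tilde f_{2m}(-1)=\tilde f_m(1)-\tilde f_{2m}(1)$.

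For the determination claim, the relation $n\,\tilde f_n(1)=\sum_{d\mid n}\psi(d)$ is a textbook M\"obius inversion giving $\psi(n)=\sum_{d\mid n}\mu(n/d)\,d\,\tilde f_d(1)$; since $f_n=\frac{1}{n}\sum_{d\mid n}\psi(d)\,p_d^{n/d}$, knowing $\psi$ determines each $f_n$. For the parallel statement with $\tilde f_n(-1)$, the two parity relations invert to express $\tilde f_n(1)$ in terms of $\tilde f_j(-1)$ with $j\le n$: in odd degree, $\tilde f_{2m+1}(1)=-\tilde f_{2m+1}(-1)$ immediately, while in even degree, $\tilde f_{2m}(1)=\tilde f_m(1)-\tilde f_{2m}(-1)$ with $\tilde f_m(1)$ already known by induction on $m$. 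The only point requiring care is the divisor bookkeeping in the even case; beyond that the argument is routine.
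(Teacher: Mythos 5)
Your proof is correct and is the natural argument; the paper itself states this proposition as a citation of \cite[Lemma 3.3]{Su1} without reproducing a proof, and the computation there is the same divisor-sum parity split plus M\"obius inversion that you give. The key steps — $(-1)^{n/d}=-1$ for all divisors when $n$ is odd, the equivalence of ``$d\mid 2m$ and $2m/d$ even'' with ``$d\mid m$'', and the strong induction recovering $\tilde f_n(1)$ from the $\tilde f_j(-1)$ — are all handled correctly.
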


\begin{lem}\label{pm}({\cite{SuLieSup2}}) Let $F=\sum_{n\geq 1} f_n,$  $G=1+\sum_{n\geq 1} g_n$ and $K=1+\sum_{n\geq 1} k_n$ be arbitrary formal series of symmetric functions, as usual with $f_n, g_n, k_n$ being of homogeneous degree $n.$
\begin{enumerate}
\item 
$H[F]=G\iff E^{\pm}[F]=\dfrac{1}{G} \iff \sum_{r\geq 1} (-1)^{r-1} e_r[F] =\dfrac{G-1}{G}.$
\item $E[F]=K\iff H^{\pm}[F]=\dfrac{1}{K} \iff \sum_{r\geq 1} (-1)^{r-1} h_r[F] =\dfrac{K-1}{K}.$
\end{enumerate}
\end{lem}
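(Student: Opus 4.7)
The plan is to deduce both parts of the lemma from the classical Newton-type identity $H \cdot E^{\pm} = 1$ (equivalently $\sum_{k=0}^{n}(-1)^k h_{n-k} e_k = 0$ for $n\geq 1$) together with its $\omega$-dual $E \cdot H^{\pm} = 1$. Because plethysm in the outer argument is a ring homomorphism (see \cite[(8.3)]{M}), applying $-[F]$ to these identities gives
\begin{equation*}
H[F]\cdot E^{\pm}[F] = 1, \qquad E[F]\cdot H^{\pm}[F] = 1,
\end{equation*}
which already reduces each direction of the first equivalence in (1) and in (2) to a formal division.

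For part (1), first I would note that $H[F] = G$ is a well-defined unit in the ring of formal symmetric function series because $G$ has constant term $1$, so the identity $H[F]\cdot E^{\pm}[F]=1$ yields $E^{\pm}[F] = 1/G$ directly, and conversely $H[F] = 1/E^{\pm}[F] = G$. For the third equivalence, I would simply split off the degree-zero term: since $e_0[F]=1$,
\begin{equation*}
E^{\pm}[F] = 1 + \sum_{r\geq 1}(-1)^r e_r[F] = 1 - \sum_{r\geq 1}(-1)^{r-1} e_r[F],
\end{equation*}
so $E^{\pm}[F] = 1/G$ is equivalent to $\sum_{r\geq 1}(-1)^{r-1}e_r[F] = 1 - 1/G = (G-1)/G$. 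Part (2) then follows by the identical argument applied to $E[F]\cdot H^{\pm}[F]=1$ with the roles of $h_r$ and $e_r$ exchanged (or alternatively by applying $\omega$ to part (1) and relabeling).

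There is no real obstacle; the only point that deserves verification is that plethysm by $F$ commutes with the infinite sums defining $H$, $E$, $H^{\pm}$, $E^{\pm}$, which is standard because $F = \sum_{n\geq 1} f_n$ has no degree-zero component so each fixed-degree coefficient on either side involves only finitely many plethystic summands. Once this is noted, the entire lemma is a formal manipulation of the single identity $H\cdot E^{\pm}=1$ and its dual.
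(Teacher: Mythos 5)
Your argument is correct and is the standard one: the identity $H\cdot E^{\pm}=1$ (equivalently $\sum_{k=0}^n(-1)^kh_{n-k}e_k=0$ for $n\ge 1$) together with the fact that $q\mapsto q[F]$ is a ring homomorphism, extended to these infinite sums degree by degree since $F$ has no constant term, immediately gives both parts. The paper itself only cites this lemma from \cite{SuLieSup2} without reproducing a proof, but the proof there proceeds in essentially the same way, so there is nothing to add.
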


\begin{prop}\label{metaequiv}(\cite{SuLieSup2}) The identities of Theorem \ref{metathm} are all equivalent to 
\begin{align}E^{\pm}(v)[F] 
&=\prod_{m\geq 1} (1- p_m)^ {\tilde f_m(v)}, \text{ and}\\
%
H^{\pm}(v)[F]&=\prod_{m\geq 1} (1- p_m)^{-\tilde f_m(-v)}
\end{align}
\end{prop}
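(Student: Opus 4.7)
The plan is to show each identity in Prop~\ref{metaequiv} is equivalent to one of (\ref{metaSym}) and (\ref{metaExt}) via Lemma~\ref{pm}, and then to derive the alternating identities (\ref{metaAltExt}) and (\ref{metaAltSym}) by applying a single plethystic sign substitution.

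First I would use the classical relation $H(v) \cdot E(-v) = 1$ in the ring of symmetric functions. Under the natural convention $E^{\pm}(v) = \sum_{r\geq 0}(-v)^r e_r = E(-v)$, this reads $H(v) \cdot E^{\pm}(v) = 1$. Since plethysm by a fixed series $F$ is a ring endomorphism, we obtain $H(v)[F] \cdot E^{\pm}(v)[F] = 1$, which is exactly the content of Lemma~\ref{pm}(1). Taking reciprocals of $\prod_m(1-p_m)^{-\tilde f_m(v)}$ then identifies (\ref{metaSym}) with the first identity of Prop~\ref{metaequiv}. The same argument, using $E(v) \cdot H(-v) = 1$ and Lemma~\ref{pm}(2), identifies (\ref{metaExt}) with the second identity.

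The key step for the alternating identities is the plethystic formula $\omega(F)^{alt} = -F[-p_1]$. I would derive this by computing $f_n[-p_1]$ directly: since $p_d[-p_1] = -p_d$ and plethysm is multiplicative in its outer argument, $(p_d^{n/d})[-p_1] = (-1)^{n/d} p_d^{n/d}$, so $f_n[-p_1] = \tfrac{1}{n}\sum_{d|n}\psi(d)(-1)^{n/d} p_d^{n/d}$. Comparing with $\omega(f_n) = \tfrac{1}{n}\sum_{d|n}\psi(d)(-1)^{n-n/d} p_d^{n/d}$ from (\ref{definef_n}) yields $f_n[-p_1] = (-1)^n \omega(f_n)$, hence $-F[-p_1] = \sum_n(-1)^{n-1}\omega(f_n) = \omega(F)^{alt}$.

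Armed with this, I would apply the plethystic endomorphism $g \mapsto g[-p_1]$ (equivalently, $p_m \mapsto -p_m$) to both sides of the first identity in Prop~\ref{metaequiv}. The right side becomes $\prod_m(1+p_m)^{\tilde f_m(v)}$. For the left, associativity of plethysm together with the standard rule $h_r[-g] = (-1)^r e_r[g]$ lets me rewrite $(E^{\pm}(v)[F])[-p_1] = E^{\pm}(v)[F[-p_1]] = H(v)[-F[-p_1]] = H(v)[\omega(F)^{alt}]$, recovering (\ref{metaAltExt}). A parallel calculation using $e_r[-g] = (-1)^r h_r[g]$ applied to the second identity of Prop~\ref{metaequiv} yields (\ref{metaAltSym}). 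The main obstacle I anticipate is careful sign bookkeeping: distinguishing the involution $\omega$ (which acts by $p_m \mapsto (-1)^{m-1}p_m$) from plethystic negation (which acts by $p_m \mapsto -p_m$), and tracking how the substitution $v \mapsto -v$ couples with each. Once the master identity $\omega(F)^{alt} = -F[-p_1]$ is isolated, however, the rest reduces to routine plethystic manipulation.
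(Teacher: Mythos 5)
The paper gives no proof of this proposition—it is cited from \cite{SuLieSup2}—so there is no in-text argument to compare against; judged on its own, your proof is essentially the standard one and is correct. The two Lemma~\ref{pm} steps are right (with the natural convention $E^{\pm}(v)=E(-v)$, $H^{\pm}(v)=H(-v)$), and your master identity $\omega(F)^{alt}=-F[-p_1]$ checks out against \eqref{definef_n}; combined with $h_r[-g]=(-1)^re_r[g]$ and the fact that $p_m\mapsto -p_m$ is an involutive ring endomorphism commuting with plethysm on the left, your derivations of \eqref{metaAltExt} and \eqref{metaAltSym} are sound. The one thing to tighten: as written you produce two separate equivalence classes, namely $\{\eqref{metaSym},\ E^{\pm}(v)[F]=\prod(1-p_m)^{\tilde f_m(v)},\ \eqref{metaAltExt}\}$ and $\{\eqref{metaExt},\ H^{\pm}(v)[F]=\prod(1-p_m)^{-\tilde f_m(-v)},\ \eqref{metaAltSym}\}$, without a bridge between them, so if ``all equivalent'' is read as mutual equivalence of all six statements you have not quite finished. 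The missing link is cheap and uses nothing new: substituting $v\mapsto -v$ in \eqref{metaAltExt} gives $H(-v)[\omega(F)^{alt}]=\prod_m(1+p_m)^{\tilde f_m(-v)}$, and one more application of $H(-v)\cdot E(v)=1$ (i.e.\ Lemma~\ref{pm}(1) for the series $\omega(F)^{alt}$) yields \eqref{metaAltSym}, closing the loop. You should also say explicitly that every step is reversible (Lemma~\ref{pm} is an ``iff'' and the sign substitution is an involution), so the chain really is a chain of equivalences rather than one-way implications.
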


\section{A class of symmetric functions indexed by subsets of primes}\label{SecPrimes}

In this section we will prove some of the key Schur positivity results on sums of power sums determined by sets of primes.

 We begin by stating a theorem of Foulkes  on the character values of representations induced from the cyclic subgroup $C_n$ of $S_n$ generated by the long cycle of length $n,$  which 
asserts Part (1) of the following (see also \cite[Ex. 7.88]{St4EC2}). We refer the reader to \cite{St4EC2} for the definition of the major index statistic on tableaux. 
\begin{thm}\label{Foulkes}   Let $\ell_n^{(r)}$ denote the Frobenius characteristic of the induced representation $\exp\left(\frac{2i\pi}{n}\cdot r\right)\big\uparrow_{C_n}^{S_n},$  $1\leq r  \leq n.$  Let $\psi_r(d)$ denote the expression 
$\phi(d) \dfrac{\mu(\frac{d}{(d, r)})} {\phi(\frac{d}{(d, r)})}.$ Then 
\begin{enumerate}
\item (Foulkes) \cite{F}
$\ell_n^{(r)}=\dfrac{1}{n} \sum_{d|n} \psi_r(d) p_d^\frac{n}{d}.$
\item (\cite[(7.191)]{KW},  \cite{St4EC2}) The multiplicity of the Schur function $s_\lambda$ in the Schur function expansion of $\ell_n^{(r)}$ is the number of standard Young tableaux of shape $\lambda$ with major index congruent to $r$ modulo $n.$
\end{enumerate}
\end{thm}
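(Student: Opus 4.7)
My plan is to handle the two parts in sequence, with (1) reducing to a computation of Ramanujan sums and (2) following from (1) via Frobenius reciprocity together with the Kraskiewicz--Weyman identity on character values at powers of an $n$-cycle.

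For part (1), the starting point is the general identity
\[\mathrm{ch}(\chi\uparrow_H^{S_n}) = \frac{1}{|H|}\sum_{h\in H}\chi(h)\,p_{\mathrm{cyc}(h)},\]
valid for any subgroup $H\le S_n$ and any class function $\chi$ on $H$; this follows by unraveling the induced character formula inside $\mathrm{ch}(V)=\frac{1}{n!}\sum_\sigma\chi_V(\sigma)p_{\mathrm{cyc}(\sigma)}$. Specializing to $H=C_n=\langle c\rangle$ and $\chi_r(c^k)=e^{2\pi irk/n}$, the element $c^k$ has cycle type $(d^{n/d})$ with $d=n/\gcd(n,k)$, so I would group the $n$ terms by divisors $d\mid n$: there are precisely $\phi(d)$ values of $k$ (namely $k=(n/d)j$ with $\gcd(j,d)=1$) contributing $p_d^{n/d}$. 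The resulting inner sum is the Ramanujan sum
\[c_d(r)=\sum_{\substack{1\le j\le d\\\gcd(j,d)=1}} e^{2\pi i jr/d},\]
and H\"older's classical identity $c_d(r)=\phi(d)\mu(d/(d,r))/\phi(d/(d,r))=\psi_r(d)$ delivers the stated formula.

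For part (2), Frobenius reciprocity gives
\[\langle\ell_n^{(r)},s_\lambda\rangle = \langle\chi^\lambda|_{C_n},\chi_r\rangle_{C_n} = \frac{1}{n}\sum_{k=0}^{n-1}\chi^\lambda(c^k)\,\zeta^{-rk},\qquad \zeta=e^{2\pi i/n}.\]
The key nontrivial input is the Kraskiewicz--Weyman identity
\[\chi^\lambda(c^k)=\sum_{T\in SYT(\lambda)}\zeta^{k\,\mathrm{maj}(T)},\]
which expresses the irreducible character value at any power of an $n$-cycle as a sum of roots of unity indexed by major indices of standard tableaux. Substituting this expansion into the Frobenius reciprocity sum and exchanging the order of summation, the inner sum over $k$ becomes the standard orthogonality relation $\frac{1}{n}\sum_{k}\zeta^{k(\mathrm{maj}(T)-r)} = [\mathrm{maj}(T)\equiv r\pmod n]$, which picks out exactly the tableaux counted on the right-hand side of~(2).

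The main obstacle is the Kraskiewicz--Weyman identity itself: once it is in hand, (2) is just Fourier inversion on $\mathbb{Z}/n\mathbb{Z}$, while (1) is essentially bookkeeping given H\"older's formula. A self-contained derivation of Kraskiewicz--Weyman — via the fundamental quasisymmetric expansion $s_\lambda=\sum_{T\in SYT(\lambda)} F_{D(T),n}$ paired against the power sum $p_d^{n/d}$ attached to $c^k$, or equivalently through Springer's theory of regular elements in Weyl groups — would be the longest and most delicate component of the proof, but lies outside the scope of what is needed for the subsequent results in this paper.
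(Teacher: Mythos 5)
Your derivation is correct, but note that the paper itself offers no proof of this theorem: it is stated as a quotation of classical results, with Part (1) attributed to Foulkes and Part (2) to Kra\'skiewicz--Weyman and Stanley. Your Part (1) is the standard argument (Frobenius characteristic of an induction from $C_n$, grouping $c^k$ by the divisor $d=n/\gcd(n,k)$, then H\"older's evaluation of the Ramanujan sum), and it is complete. For Part (2), be aware that the Kra\'skiewicz--Weyman identity $\chi^\lambda(c^k)=\sum_{T}\zeta^{k\,\mathrm{maj}(T)}$ and the statement of Part (2) are Fourier duals of one another on $\mathbb{Z}/n\mathbb{Z}$, so your argument is a translation rather than an independent proof; since that identity is precisely the content the paper cites to \cite{KW}, this is acceptable, and you are right that a self-contained proof of it (e.g.\ via the quasisymmetric expansion of $s_\lambda$) is the only genuinely nontrivial ingredient.
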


The quantity $\psi_r(d)$ in Foulkes' theorem is a \textit{Ramanujan sum}; it equals the sum of the $r$th powers of all the primitive $d$th roots of unity.

We are now ready to generalise the definitions ~\eqref{LieConj}  of $Lie_n$ and $C\!onj_n.$

\begin{defn}\label{SetofPrimes} Let $S=\{q_1, \ldots, q_k,\ldots\}$ be a set of distinct primes.  Every positive integer $n$ factors uniquely into $n=Q_n\ell_n$ where $Q_n= \prod_{q\in S} q^{a_q(n)} $ for nonnegative integers $a_q(n),$ and $(\ell_n, q)=1$ for all $q\in S $ such that $a_q(n)\geq 1.$ We associate to the set $S$ a representation $Lie_n^S$, defined via its Frobenius characteristic, as follows:
\begin{equation*}\label{SetofPrimes_a}Lie_n^S= \mathrm{ch}\exp \left(\tfrac{2\pi i}{n}\cdot Q_n\right){\big\uparrow}_{C_n}^{S_n}
\end{equation*} 
If $\bar{S}$ denotes the set of primes \textit{not} in $S,$ then clearly we have
\begin{equation*}\label{SetofPrimes_b}
Lie_n^{\bar{S}}= \mathrm{ch }\exp \left(\tfrac{2\pi i}{n}\cdot \ell_n\right)\big\uparrow_{C_n}^{S_n}.\end{equation*}
We may allow $S$ to be the empty set in this definition by interpreting $Q_n$ as the empty product; we then have $Q_n=1$ for all $n\geq 1,$ and hence 
\begin{equation*} Lie_n^\emptyset=Lie_n=\ell_n^{(1)},\quad {\rm  while }\quad
Lie_n^{\bar{\emptyset}}=C\!onj_n=\ell_n^{(n)};
\end{equation*} thus  $Lie_n^{\bar{\emptyset}}$ is the Frobenius characteristic of $S_n$ acting on the class of $n$-cycles by conjugation. Similarly, at the other extreme, if $S$ is the set of all primes $\mathcal{P}$, then $Q_n=n, \ell_n=1$ for all $n,$ and thus 
\begin{equation*} 
Lie_n^{\mathcal{P}}=C\!onj_n=\ell_n^{(n)} \quad {\rm  while }\quad Lie_n^{\bar {\mathcal{P}} }=Lie_n=\ell_n^{(1)}.
\end{equation*}
Let $P(S)$ denote the set of positive integers whose  prime divisors constitute a subset of the set of primes $S;$  note that $1\in P(S).$ Similarly let $P(\bar{S})$ be the set of positive integers whose set of prime divisors is disjoint from  $S;$ note that $P(\bar{S})$ is precisely the set of integers that are relatively prime to every prime in $S.$   Also $P(S)\cap P(\bar{S})=\{1\}.$  Equivalently, 
\begin{equation*}P(S)=\{n\geq 1: q \text{ is a prime factor of }n \Longrightarrow q\in S \},\end{equation*}
\begin{equation*}P(\bar{S})=\{n\geq 1: q \text{ is a prime factor of }n \Longrightarrow q\notin S\}.\end{equation*}
\end{defn}

When $S$ consists of a single prime $\{q\}$, we can view the modules $Lie_n^{(q)}$ as interpolating between the modules $Lie_n$ and $C\!onj_n=\text{ch } 1\uparrow_{C_n}^{S_n},$ since:
\begin{equation}
 Lie_n^{(q)}=Lie_n \text{ when $n$ is relatively prime to the prime $q,$}  
\end{equation}
and, at the other extreme:
\begin{equation}
Lie_n^{(q)}=C\!onj_n
 \text{ when $n$ is a power of the prime $q.$ }
\end{equation}

More generally, if $S$ is a nonempty set of primes, then 
\begin{equation} Lie_n^S=
\begin{cases} Lie_n, & n\in P(\bar{S})\\
                  C\!onj_n, &n\in P(S).
\end{cases}
\end{equation}
 
Our goal is to describe the symmetric and exterior powers of these 
modules, that is, the analogues of the higher Lie modules $H_\lambda[Lie]$  and $E_\lambda[Lie].$  Theorem \ref{metathm} of Section~\ref{SecPrelim} allows us to do this in an elegant manner, without intricate plethystic calculations. In order to apply the theorem, we must first determine the values $\widetilde{Lie}_n^S(\pm 1)$ of the polynomial $\widetilde{Lie}_n^S(t)$ associated to the symmetric function $Lie_n^S$ (see~\eqref{definepolyf_n}), for each subset $S$ of primes.  For this we need a computation involving the Ramanujan sum appearing in Theorem~\ref{Foulkes}.

\begin{lem}\label{Ramanujan} Let $r$ be a divisor of $n$ such that $(r, \frac{n}{r})=1.$ Consider the representation with Frobenius characteristic 
$\ell_n^{(r)} =\mathrm{ch}\exp (\tfrac{2\pi i}{n}\cdot r)\big\uparrow_{C_n}^{S_n}.$
Then for the associated polynomial $\tilde \ell_n^{(r)}(t),$ we have $\tilde \ell_n^{(r)}(1)=$
$\begin{cases} 1, & n=r,\\
                    0, &\text{ otherwise.}
\end{cases}$
\end{lem}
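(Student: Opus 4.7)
The plan is to reduce $\tilde\ell_n^{(r)}(1)$ to a sum of Ramanujan sums, and then evaluate that sum by a standard fraction-in-lowest-terms bijection. By Foulkes' theorem (Theorem~\ref{Foulkes}(1)) and the definition~\eqref{definepolyf_n} of the associated polynomial, we immediately have
\[
\tilde\ell_n^{(r)}(1) = \frac{1}{n}\sum_{d\mid n}\psi_r(d),
\]
so everything reduces to computing $\sum_{d\mid n}\psi_r(d)$. As noted right after Theorem~\ref{Foulkes}, $\psi_r(d)$ is the Ramanujan sum
\[
c_d(r) \;=\; \sum_{\substack{1\le a\le d\\ (a,d)=1}} e^{2\pi i\, a r/d}.
\]

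The key step is the identity $\sum_{d\mid n} c_d(r) = n\cdot[n\mid r]$. To prove it, I would expand the Ramanujan sums and group the exponentials by the fractions they represent. Every $k\in\{1,\dots,n\}$ yields a unique fraction $k/n = a/d$ in lowest terms with $d = n/\gcd(k,n)$, $a = k/\gcd(k,n)$, and $d\mid n$; conversely, every pair $(d,a)$ with $d\mid n$ and $(a,d)=1$, $1\le a\le d$, arises this way. Since $e^{2\pi i a r/d} = e^{2\pi i k r/n}$, this bijection gives
\[
\sum_{d\mid n} c_d(r) \;=\; \sum_{k=1}^{n} e^{2\pi i k r/n},
\]
and the right-hand side is a complete geometric sum of $n$-th roots of unity evaluated at the $r$-th powers, hence equal to $n$ when $n\mid r$ and $0$ otherwise.

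To finish, observe that the hypothesis $r\mid n$ together with $(r,n/r)=1$ forces: $n\mid r$ if and only if $n=r$. (From $r\mid n$ and $n\mid r$ one concludes $n=r$, and the coprimality hypothesis is automatic in that case since $n/r=1$.) Substituting back gives
\[
\tilde\ell_n^{(r)}(1) \;=\; \frac{1}{n}\sum_{d\mid n} c_d(r) \;=\; \frac{1}{n}\cdot n\cdot[n=r] \;=\; [n=r],
\]
which is the claim. The only potentially fiddly point is verifying the bijection in the Ramanujan identity cleanly, but once that is done the result drops out; the coprimality condition $(r,n/r)=1$ is used solely to identify when $n\mid r$ occurs among the divisibility constraints.
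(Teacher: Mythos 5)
Your proof is correct, but it takes a genuinely different route from the paper's. The paper works directly with H\"older's closed form $\psi_r(d)=\phi(d)\mu(d/(d,r))/\phi(d/(d,r))$: it writes $n=rs$ with $(r,s)=1$, uses the coprimality hypothesis to factor each divisor $d\mid n$ uniquely as $d=r_1s_1$ with $r_1\mid r$, $s_1\mid s$, and then exploits multiplicativity of $\phi$ and $\mu$ to split the divisor sum into $\bigl(\sum_{r_1\mid r}\phi(r_1)\bigr)\bigl(\sum_{s_1\mid s}\mu(s_1)\bigr)=r\cdot[s=1]$. You instead invoke the exponential-sum interpretation $\psi_r(d)=c_d(r)$ (which the paper asserts right after Theorem~\ref{Foulkes}, so you are entitled to it) and prove $\sum_{d\mid n}c_d(r)=n\cdot[n\mid r]$ via the lowest-terms bijection $(d,a)\leftrightarrow k=an/d$; this bijection is standard and your verification sketch is sound. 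Your approach buys more: it establishes $\tilde\ell_n^{(r)}(1)=[n\mid r]$ for \emph{arbitrary} $r$, with no use of the hypothesis $(r,n/r)=1$ (indeed $r\mid n$ and $n\mid r$ already force $n=r$ without any coprimality), and this stronger statement is exactly the content of \cite[Lemma 5.5]{Su1}, quoted later as Theorem~\ref{Su1Thm5.6}. The paper's argument is more self-contained at the level of the stated formula for $\psi_r$ (it never needs the roots-of-unity interpretation), but it is genuinely tied to the coprimality hypothesis, whereas yours shows that hypothesis is superfluous for this lemma.
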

\begin{proof} For ease of notation we write $n=rs$ where $r,s$ are relatively prime positive integers.  Then every divisor $d$ of $n$ factors uniquely into $d=r_1 s_1$ where $r_1$ is a divisor of $r$, 
$s_1$ is a divisor of $s,$ and $r_1,$ $s_1$ are thus relatively prime. 
In particular $(d,r)=r_1,$ and $\frac{d}{(d,r)}=s_1.$  Hence we have 
\begin{align*} \tilde\ell_n^{(r)}(1)&=\frac{1}{n}\sum_{d|n}\phi(d) \frac {\mu(\frac{d}{(d,r)})}{\phi(\frac{d}{(d, r)})}
=\frac{1}{n}\sum_{{r_1|r},{s_1|s}} \phi(r_1 s_1) 
\frac{\mu(\frac{d}{r_1})}{\phi(\frac{d}{r_1})}\\
&=\frac{1}{n}\sum_{{r_1|r},{s_1|s}} \phi(r_1 s_1) 
\frac{\mu(s_1)}{\phi(s_1)}
=\frac{1}{n}\sum_{{r_1|r},{s_1|s}} \phi(r_1) 
\mu(s_1)\\
&=\frac{1}{n}\left(\sum_{r_1|r} \phi(r_1)\right )\left(\sum_{s_1|s}\mu(s_1)\right)=\frac{r}{n}\sum_{s_1|s}\mu(s_1).
\end{align*}
The last sum is nonzero if and only if $s=\frac{n}{r}=1,$ i.e. $n=r,$ and the result follows.
\end{proof}

\begin{cor}\label{LSvalues} Let $S$ be a fixed  set of primes, and let $Lie_n^S$ 
be (the Frobenius characteristic of) the representation of Definition \ref{SetofPrimes}.  Then 
\begin{enumerate} \item $\widetilde{Lie}_n^S(1)$ is nonzero if and only if $n=1$ or the distinct prime factors of $n$ are contained in the set $S,$ i.e. if and only if $n\in P(S),$ in which case it equals 1. 
\item In particular $\widetilde{Lie}_n^\emptyset(1)=\widetilde{Lie}_n(1)=\widetilde{Lie}_n^{\bar{\mathcal{P}}}(1)$ is nonzero unless $n=1,$ in which case it equals 1.  Similarly $\widetilde{Lie}_n^{\bar{\emptyset}}(1)=\widetilde{C\!onj}_n(1)$ equals 1 for all $n\geq 1.$
\end{enumerate}
\end{cor}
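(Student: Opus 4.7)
The plan is to derive both parts as immediate consequences of Lemma \ref{Ramanujan}. The first step is to observe that Definition \ref{SetofPrimes} directly identifies $Lie_n^S$ with the induced characteristic $\ell_n^{(Q_n)}$, where $Q_n$ is the $S$-part of $n$ in the unique factorisation $n = Q_n \ell_n$. Before invoking Lemma \ref{Ramanujan}, I would check that its hypothesis $(r, n/r) = 1$ is met with $r = Q_n$: indeed $Q_n$ is a product of primes from $S$, while $\ell_n = n/Q_n$ is by construction coprime to every prime of $S$, so $(Q_n, \ell_n) = 1$.

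With the hypothesis in place, Lemma \ref{Ramanujan} applied to $r = Q_n$ yields $\widetilde{Lie}_n^S(1) = \tilde\ell_n^{(Q_n)}(1)$, which equals $1$ when $n = Q_n$ and $0$ otherwise. The second step is to translate this equality into the membership condition of part (1): $n = Q_n$ is equivalent to $\ell_n = 1$, which says every prime factor of $n$ lies in $S$, i.e., $n \in P(S)$. The degenerate case $n = 1$ is automatically included via $Q_1 = 1$, consistent with the convention that the empty product is $1$.

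For part (2) I would simply specialise. When $S = \emptyset$ (equivalently $\bar{\mathcal{P}}$), the product $Q_n$ is empty and equals $1$, so $n = Q_n$ forces $n = 1$; hence $\widetilde{Lie}_n(1)$ vanishes for $n > 1$ and equals $1$ at $n = 1$. When $S = \bar\emptyset = \mathcal{P}$, every prime factor of $n$ lies in $S$, so $Q_n = n$ for every $n$, giving $\widetilde{C\!onj}_n(1) = 1$ uniformly. There is essentially no obstacle: the corollary is a direct bookkeeping translation of Lemma \ref{Ramanujan} into the notation adapted to subsets of primes, the only point deserving care being the verification of the coprimality condition, which is immediate from the definition of the $S$-part.
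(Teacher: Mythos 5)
Your proposal is correct and follows the paper's own argument exactly: apply Lemma \ref{Ramanujan} with $r=Q_n$, note $n=Q_n$ is equivalent to $n\in P(S)$, and specialise to $S=\emptyset$ and $S=\mathcal{P}$ for part (2). Your explicit check of the coprimality hypothesis $(Q_n,\ell_n)=1$ is a welcome detail the paper leaves implicit, and you correctly read the statement of part (2) as ``is zero unless $n=1$.''
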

\begin{proof} Write $n=Q_n\ell_n$ where $Q_n$ is the product of all maximal prime powers $q_i^{a_i}$ that are divisors of $n,$ and $\ell_n$ is relatively prime to every prime $q_i$ in $S.$   In Lemma \ref{Ramanujan}, take $r=Q_n$ for Part (1). 
Lemma \ref{Ramanujan} says $\widetilde{Lie}_n^S(1)$ 
 if and only if $Q_n=n$ 
 Note that this applies also when $S$ is empty 
 since then $Q_n=1.$ 
 The claims follow immediately from the observations succeeding Definition \ref{SetofPrimes}.
\end{proof}

The next step is to apply  Theorem \ref{metathm} to the sequence of symmetric functions $f_n=Lie_n^S.$  
 In order to do this, we must verify that each sequence is determined by a single function $\psi$ whose value depends only on the set $S.$   (See equation~(\ref{definef_n}).) 
Using Theorem \ref{Foulkes}, we see that with $Q_n, \ell_n$ as in Definition \ref{SetofPrimes}, one has 
\begin{center}$Lie_n^S=\frac{1}{n}\sum_{d|n} \psi_n(d) p_d^{\frac{n}{d}}$\quad
with \quad
$\psi_n(d)=\phi(d) \dfrac{\mu(d/(d,Q_n))}{\phi(d/(d,Q_n))}.$\end{center}
We claim that this expression is in fact independent of $n.$ 
Since $d$ divides $n,$ we can factor $d$ uniquely as $d=Q_d\ell_d$ where $\ell_d$ is relatively prime to $Q_d$ as well as to all the primes in the set $S.$ 
In particular $(d, Q_n)=Q_d.$ 
Using the multiplicative property of $\phi,$ we obtain
$$\psi_n(d)=\phi(Q_d)\phi(\ell_d) \dfrac{\mu(d/Q_d)}{\phi(d/(Q_d))}
=\phi(Q_d)\phi(\ell_d) \dfrac{\mu(\ell_d)}{\phi(\ell_d)}
=\phi(Q_d)\mu(\ell_d),$$
thereby showing that $\psi_n(d)=\psi(d)$ depends only on $d$ and the set of primes $S.$ 
An entirely analogous calculation shows that 
\begin{center}$Lie_n^{\bar S}=\frac{1}{n}\sum_{d|n} {\bar \psi}(d) p_d^{\frac{n}{d}}$\quad
with\quad  $\bar\psi(d)=\phi(\ell_d)\mu(Q_d).$\end{center} 
In summary, given a set of primes $S$ and a positive integer $d,$ if $d$ is factored uniquely as $d=Q_d\ell_d$ where $\ell_d$ is relatively prime to the primes in $S$ and also to $Q_d,$ (so that $Q_d$ is a product of prime powers of elements of $S$), then 
\begin{equation}\label{FoulkesPrimes1} \text{For all }n \geq 1,\quad
Lie_n^S=\frac{1}{n}\sum_{d|n} \psi(d) p_d^{\frac{n}{d}} \quad 
\text{ with } \psi(d)=\phi(Q_d)\mu(\ell_d), \text{ and}
\end{equation}
\begin{equation}\label{FoulkesPrimes2} \text{For all }n \geq 1,\quad
Lie_n^{\bar S}=\frac{1}{n}\sum_{d|n} \bar\psi(d) p_d^{\frac{n}{d}} \quad
\text{ with } \bar\psi(d)=\phi(\ell_d)\mu(Q_d).
\end{equation}

With the calculation of $\widetilde{Lie}_n^S(1)$ from Corollary~\ref{LSvalues}, we are ready to invoke the power of Theorem~\ref{metathm}.
>From equations \eqref{metaSym} and \eqref{metaAltExt}, we have the following formula for the higher $L^S$-modules:
\begin{thm}\label{SymAltSymLS} Let $L^S=\sum_{n\geq 1} Lie_n^S. $   
 Then one has the generating functions
   \begin{equation}\label{SymLS}
 H[L^S](t) = \prod_{n\in P(S)}  (1-t^n p_n)^{-1}
=\sum_{\lambda\in Par:\lambda_i\in P(S)} t^{|\lambda|} p_\lambda;
\end{equation}
\begin{equation}\label{AltSymLS} H[\omega(L^S)^{alt}](t)=\prod_{n\in P(S)} (1+t^n p_n)
=\sum_{\lambda\in D\!Par:\lambda_i\in P(S)} t^{|\lambda|} p_\lambda;
\end{equation} \end{thm}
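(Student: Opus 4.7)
The plan is to apply the machinery of Section~\ref{SecPrelim} to the sequence $f_n=Lie_n^S$. Two preliminary inputs are already in place: equation~\eqref{FoulkesPrimes1} shows that $Lie_n^S$ fits the shape~\eqref{definef_n} with $\psi(d)=\phi(Q_d)\mu(\ell_d)$, a function depending only on $d$ and $S$; and Corollary~\ref{LSvalues} says $\widetilde{Lie}_n^S(1)=1$ when $n\in P(S)$ and vanishes otherwise. The one small notational adjustment needed before invoking Theorem~\ref{metathm} is that $t$ on the right of~\eqref{SymLS}--\eqref{AltSymLS} tracks \emph{degree}, whereas $v$ inside Theorem~\ref{metathm} tracks \emph{length}. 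The cleanest way around this is to work directly with the $t$-series $L^S(t)=\sum_{n\geq 1} t^n Lie_n^S$ via Proposition~\ref{Su1Prop3.1}.

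For~\eqref{SymLS}, I would start from $L^S(t)=\log\prod_{d\geq 1}(1-t^d p_d)^{-\psi(d)/d}$ and combine it with $H=\exp\sum_{k\geq 1}p_k/k$. Pushing $p_k$ through the formal logarithm and product, using $p_k[t^d p_d]=t^{dk}p_{dk}$, produces
$$H[L^S(t)]=\exp\sum_{k,d\geq 1}\frac{-\psi(d)}{dk}\log(1-t^{dk}p_{dk})=\prod_{k,d\geq 1}(1-t^{dk}p_{dk})^{-\psi(d)/(dk)}.$$
Re-indexing by $n=dk$ collapses the exponent of $(1-t^n p_n)$ to $-\tfrac{1}{n}\sum_{d\mid n}\psi(d)=-\widetilde{Lie}_n^S(1)$, which by Corollary~\ref{LSvalues} equals $-1$ for $n\in P(S)$ and $0$ otherwise. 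This yields the first equality of~\eqref{SymLS}; expanding each surviving factor as a geometric series and collecting by partitions with all parts in $P(S)$ supplies the second.

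The identity~\eqref{AltSymLS} follows by the parallel argument applied to the second formula of Proposition~\ref{Su1Prop3.1}, namely $(\omega L^S)^{alt}(t)=\log\prod_d(1+t^d p_d)^{\psi(d)/d}$: the same plethystic manipulation gives $H[(\omega L^S)^{alt}(t)]=\prod_n(1+t^n p_n)^{\widetilde{Lie}_n^S(1)}=\prod_{n\in P(S)}(1+t^n p_n)$, and expanding each linear factor produces the sum over partitions with distinct parts in $P(S)$. The one genuinely delicate step in the whole argument is the bookkeeping for the re-indexing $(d,k)\mapsto(n,d)$ and the verification that plethysm by $p_k$ commutes with the formal logarithm and infinite product; once that is in place, Corollary~\ref{LSvalues} does the remaining combinatorial work and no further plethystic calculations are required.
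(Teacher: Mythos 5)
Your proposal is correct and follows essentially the same route as the paper: the paper deduces both identities in one line by specializing Theorem~\ref{metathm} (equations~\eqref{metaSym} and~\eqref{metaAltExt}) at $v=1$, using exactly the two inputs you identify, namely that $Lie_n^S$ has the form~\eqref{definef_n} with the single function $\psi(d)=\phi(Q_d)\mu(\ell_d)$ of~\eqref{FoulkesPrimes1} and that $\widetilde{Lie}_n^S(1)$ is the indicator of $P(S)$ from Corollary~\ref{LSvalues}. Your only deviation is that you re-derive the needed specialization of Theorem~\ref{metathm} from Proposition~\ref{Su1Prop3.1} rather than citing it, which is a correct (and careful) unwinding of the same machinery.
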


  Our first Schur positivity result is now easily deduced. Recalling the definitions of $P(S), P(\bar{S})$ from Definition \ref{SetofPrimes}, we obtain:
\begin{thm}\label{Spositivity1} Fix a set $S$ of primes.  The following sums of power sums are Schur positive:
\begin{enumerate}
\item $\sum_{\lambda\vdash n: \lambda_i\in P(S)} p_\lambda;$
\item $\sum_{\lambda\vdash n: \lambda_i\in P(\bar{S})} p_\lambda;$
\item $\sum_{\stackrel{\lambda\vdash n: \lambda_i\in P({S})}{n-\ell(\lambda) \text{ even}}} p_\lambda=\sum_{\stackrel{\lambda\vdash n: \lambda_i\in P({S})}{\lambda 
 \text{ has an even number of even parts}}} p_\lambda.$
\end{enumerate}
\end{thm}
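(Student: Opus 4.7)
All three Schur positivity statements will be read off directly from Theorem~\ref{SymAltSymLS} by coefficient extraction, together with one application of the involution $\omega$ for part~(3); no new heavy computation is needed.

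For part~(1), I compare the two expressions for the generating function $H[L^S](t)$: the combinatorial identity~\eqref{SymLS} and the definitional expansion $H[L^S](t)=\sum_{\lambda\in Par}t^{|\lambda|}H_\lambda[L^S]$ from Section~\ref{SecPrelim}. Equating coefficients of $t^n$ on both sides gives
\[\sum_{\lambda\vdash n:\, \lambda_i\in P(S)} p_\lambda \;=\; \sum_{\lambda\vdash n} H_\lambda[L^S].\]
By Definition~\ref{SetofPrimes}, each $Lie_i^S$ is a genuine $S_i$-module (being induced from a one-dimensional representation of the cyclic subgroup $C_i$), so each factor $h_{m_i(\lambda)}[Lie_i^S]$ in $H_\lambda[L^S]=\prod_i h_{m_i(\lambda)}[Lie_i^S]$ is the Frobenius characteristic of a true symmetric power and hence Schur positive; the product $H_\lambda[L^S]$ and the sum over $\lambda\vdash n$ inherit Schur positivity. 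Part~(2) follows by the same argument with $S$ replaced by its complement $\bar S$, since $Lie_n^{\bar S}$ is also a genuine induced $S_n$-module and $P(\bar S)$ plays the role of $P(S)$.

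For part~(3), set $A_n:=\sum_{\lambda\vdash n:\,\lambda_i\in P(S)}p_\lambda$. Using $\omega(p_\mu)=(-1)^{|\mu|-\ell(\mu)}p_\mu$ I obtain
\[\omega(A_n)=\sum_{\lambda\vdash n:\,\lambda_i\in P(S)}(-1)^{n-\ell(\lambda)}p_\lambda,\]
so the desired sum is exactly $\tfrac{1}{2}(A_n+\omega(A_n))$. Part~(1) gives that $A_n$ is Schur positive, and $\omega(A_n)$ is Schur positive because $\omega(s_\mu)=s_{\mu'}$ merely permutes the Schur basis; hence their average has nonnegative Schur coefficients. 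The only subtlety is that dividing by $2$ should still yield integer coefficients in the Schur basis: this is automatic because $\tfrac{1}{2}(A_n+\omega(A_n))$ has been exhibited as a $\mathbb{Z}$-linear combination of power sums, and $[s_\mu]p_\nu=\chi^\mu_\nu\in\mathbb{Z}$. I anticipate no serious obstacle: the substantive content is packaged inside Theorem~\ref{SymAltSymLS}, leaving only coefficient extraction and a standard $\omega$-parity trick for the restricted sum in~(3).
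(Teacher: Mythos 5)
Your proposal is correct and follows essentially the same route as the paper: parts (1) and (2) are read off from equation~\eqref{SymLS} using the fact that each $Lie_i^S$ (resp.\ $Lie_i^{\bar S}$) is a genuine induced module, so the symmetric powers $H_\lambda[L^S]$ are Schur positive; part (3) is the identical $\omega$-averaging argument, with integrality of the Schur coefficients supplied by the power-sum expansion, exactly as in the paper. The only cosmetic difference is that the paper also remarks that part (3) reduces to part (1) when $2\notin S$, a case distinction your uniform argument does not need.
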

\begin{proof} Parts (1) and (2) are immediate since $Lie_n^S$ and $Lie_n^{\bar{S}}$ are Schur positive by definition, so the left side of (\ref{SymLS}) in each case is a symmetric power of  a true $S_n$-module.
For Part (3), observe first that  $n-\ell(\lambda)$ is congruent to the number of even parts of $\lambda$, and hence Part (3) coincides with Part (1) unless $2\in S.$ In that case, applying the involution $\omega$ to 
equation (\ref{SymLS}) of Theorem \ref{SymAltSymLS}, (with $t=1$), we have 
\begin{equation*} 
\sum_{\lambda\vdash n: \lambda_i\in P({S}), n-\ell(\lambda) \text{ even}} p_\lambda
=\frac{1}{2} (H[L^S]+\omega(H[L^S]))
\end{equation*}
But the Schur expansion on the left-hand side has integer coefficients, and the Schur expansion on the right-hand side is certainly positive.  
Part (3) follows.\end{proof}

In the special case when  $S$ is the set of all odd primes,  equation (\ref{SymLS})  and Part (2) of Theorem \ref{Spositivity1}   describe the sum of power sums $p_\lambda$ for all parts of $\lambda$ odd, as the symmetrised module of a representation, whereas  \cite[Theorem 4.9]{Su1} (see also Proposition \ref{ExtAltExtLS} below) gives a description as  the exterior power of the conjugacy action.

When $S=\emptyset, $   equations (\ref{SymLS}) and (\ref{AltSymLS}) applied to $S$ and $\bar{S}$ reduce to  known formulas, listed below,  of Thrall \cite{T}, Cadogan \cite{C}, and Solomon \cite{So}, respectively.  
See also \cite[Ex. 7.71, Ex. 7.88, Ex. 7.89]{St4EC2}
\begin{thm} \label{ThrallPBWCadoganSolomon}
\begin{equation*}\label{ThrallPBW}  (Thrall) \qquad H[\sum_{n\geq 1} Lie_n](t)=(1-tp_1)^{-1}
\end{equation*}
 \begin{equation*}\label{Cadogan} (Cadogan)\qquad  H[\sum_{n\geq 1} (-1)^{n-1} \omega(Lie_n)](t)
=1+tp_1. \end{equation*} 
\begin{equation*}\label{Solomon} (Solomon )\qquad H[\sum_{n\geq 1}  \mathrm{ch}
(1\big\uparrow_{C_n}^{S_n})](t)=\prod_{n\geq 1} (1-t^np_n)^{-1}
\end{equation*}
\end{thm}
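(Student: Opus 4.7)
The plan is to derive all three classical identities as immediate specialisations of Theorem \ref{SymAltSymLS}, whose generating-function formulas \eqref{SymLS} and \eqref{AltSymLS} are already in hand. The only input beyond those formulas is the identification of extremal cases from Definition \ref{SetofPrimes}: when $S=\emptyset$ one has $Lie_n^\emptyset = Lie_n$ and $P(\emptyset) = \{1\}$ (since no integer $n \geq 2$ has an empty set of prime divisors); when $S = \mathcal{P}$ one has $Lie_n^\mathcal{P} = C\!onj_n = \mathrm{ch}(1\!\uparrow_{C_n}^{S_n})$ and $P(\mathcal{P})$ exhausts the positive integers.

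Thrall's identity is \eqref{SymLS} at $S=\emptyset$: the series $L^\emptyset$ is $\sum_{n\geq 1}Lie_n$, and the right-hand product collapses to the single factor $(1-tp_1)^{-1}$ indexed by $P(\emptyset)=\{1\}$. Solomon's identity is \eqref{SymLS} at $S=\mathcal{P}$: now $L^\mathcal{P} = \sum_{n\geq 1}\mathrm{ch}(1\!\uparrow_{C_n}^{S_n})$ and the product runs over every positive integer, giving $\prod_{n\geq 1}(1-t^np_n)^{-1}$.

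For Cadogan's identity, apply \eqref{AltSymLS} at $S=\emptyset$. Unpacking the definition of $Q^{alt}(t)$ with $F=Lie$, the argument $\omega(L^\emptyset)^{alt}(t)$ becomes $\sum_{n\geq 1}(-1)^{n-1}t^n\omega(Lie_n)$, which matches the stated left-hand side once one recognises that the outer $t$ tracks homogeneous degree in the convention of Theorem \ref{SymAltSymLS}; the right-hand product $\prod_{n\in P(\emptyset)}(1+t^np_n)$ collapses to $1+tp_1$. The only step that requires any care here is notational: one must confirm that substituting $p_d \mapsto t^dp_d$ inside a homogeneous symmetric function of degree $n$ scales it by $t^n$, so that applying $H$ to $\sum_n t^n Lie_n^S$ (respectively $\sum_n(-1)^{n-1}t^n\omega(Lie_n)$) agrees with applying $H$ to $L^S$ (respectively $\omega(L^\emptyset)^{alt}$) followed by the substitution $p_n\mapsto t^np_n$ on the product side of \eqref{SymLS} (respectively \eqref{AltSymLS}). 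This bookkeeping is the main ``obstacle''; no genuine computation is required.
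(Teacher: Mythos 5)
Your proposal is correct and follows exactly the paper's route: the paper derives all three identities by specialising equations \eqref{SymLS} and \eqref{AltSymLS} of Theorem \ref{SymAltSymLS} at $S=\emptyset$ (for Thrall and Cadogan, using $P(\emptyset)=\{1\}$) and at $\bar{\emptyset}=\mathcal{P}$ (for Solomon, where $Lie_n^{\bar\emptyset}=C\!onj_n$ and the product runs over all $n\geq 1$). Your extra remark on the $t$-bookkeeping (that $p_d\mapsto t^dp_d$ scales a degree-$n$ homogeneous function by $t^n$) is correct and consistent with the conventions of \eqref{defHE} and Theorem \ref{metathm}.
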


Next we compute the values of the polynomial $\widetilde{Lie}_n^S(t)$ for $t=-1.$  Thanks to Proposition~\ref{metaf}, we can avoid another cumbersome Ramanujan sum computation.
\begin{lem}\label{LSnegvalues}  For the set of distinct primes 
$S:$
\begin{enumerate}
\item If $2\notin S,$ then $\widetilde{Lie}_n^S(-1)=
\begin{cases} -1, & n\in P(S)\\
                       1, &n\text{ even and }\frac{n}{2}\in P(S)\\
                       0, &\text{ otherwise.}
\end{cases}$
\item If $2\in S,$ then $\widetilde{Lie}_n^S(-1)=
\begin{cases} -1, &n \text{    odd and } n\in P(S)\\
                       0, &\text{ otherwise.}
\end{cases}$
\item  $\widetilde{Lie}_n^\emptyset(-1)=\widetilde{Lie}_n^{\bar{\mathcal{P}}}(-1)
=\begin{cases} -1, & n=1\\
                       1, &n=2\\
                       0, &\text{ otherwise.}
\end{cases}$
\item    $\widetilde{Lie}_n^{\bar{\emptyset}}(-1)=\widetilde{Lie}_n^{\mathcal{P}}(-1)=
\begin{cases} -1, & n\text{    odd }\\
                     0, &\text{ otherwise.}
\end{cases}$
\end{enumerate}
\end{lem}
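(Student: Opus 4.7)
The plan is to deduce the values $\widetilde{Lie}_n^S(-1)$ directly from the previously computed values $\widetilde{Lie}_n^S(1)$, using the general parity identity of Proposition~\ref{metaf}, rather than re-doing a Ramanujan-sum calculation as in Lemma~\ref{Ramanujan}. Recall from Corollary~\ref{LSvalues} that $\widetilde{Lie}_n^S(1) = 1$ if $n \in P(S)$ and $0$ otherwise, and that Proposition~\ref{metaf} asserts
\begin{equation*}
\widetilde{Lie}_{2m+1}^S(-1) = -\widetilde{Lie}_{2m+1}^S(1), \qquad \widetilde{Lie}_{2m}^S(-1) = \widetilde{Lie}_{m}^S(1) - \widetilde{Lie}_{2m}^S(1).
\end{equation*}
Applying this is the entire content of the proof; the remaining work is bookkeeping according to whether $2 \in S$ and the parity of $n$.

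For Part (1), assume $2 \notin S$. If $n$ is odd, $\widetilde{Lie}_n^S(-1) = -\widetilde{Lie}_n^S(1)$ equals $-1$ exactly when $n \in P(S)$ (which, since $n$ is odd, is consistent with $2 \notin S$), and $0$ otherwise. If $n = 2m$ is even, then because $2 \notin S$ we have $2m \notin P(S)$, so $\widetilde{Lie}_{2m}^S(1) = 0$; therefore $\widetilde{Lie}_{2m}^S(-1) = \widetilde{Lie}_m^S(1)$, which equals $1$ when $\frac{n}{2} = m \in P(S)$ and $0$ otherwise. For Part (2), assume $2 \in S$. The odd case is handled identically. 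For $n = 2m$ even, the observation is that $2 \in S$ forces the equivalence $m \in P(S) \iff 2m \in P(S)$, so the two terms $\widetilde{Lie}_m^S(1)$ and $\widetilde{Lie}_{2m}^S(1)$ are either both $1$ or both $0$, and their difference vanishes.

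Parts (3) and (4) are then special cases. For $S = \emptyset$ we have $P(\emptyset) = \{1\}$, so Part~(1) with $2 \notin \emptyset$ yields $-1$ at $n = 1$, $+1$ at $n = 2$ (since $\frac{n}{2} = 1 \in P(\emptyset)$), and $0$ elsewhere. For $S = \mathcal{P}$ we have $2 \in \mathcal{P}$ and $P(\mathcal{P}) = \mathbb{Z}_{>0}$, so Part~(2) gives $-1$ for all odd $n$ and $0$ for all even $n$. The main (mild) obstacle is simply organising the case split cleanly so that the two equivalent statements in each of Parts (3)–(4) (namely $S = \emptyset$ versus $\bar{\mathcal{P}}$, and $\bar{\emptyset}$ versus $\mathcal{P}$) are seen to be manifestly the same; this follows at once from Definition~\ref{SetofPrimes}, under which $P(\bar{\emptyset}) = P(\mathcal{P}) = \mathbb{Z}_{>0}$ and $P(\bar{\mathcal{P}}) = P(\emptyset) = \{1\}$.
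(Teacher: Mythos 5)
Your proposal is correct and follows essentially the same route as the paper: both deduce the values at $t=-1$ from Corollary~\ref{LSvalues} via the parity identities of Proposition~\ref{metaf}, with the same case split on the parity of $n$ and on whether $2\in S$ (including the key observation that $2\in S$ makes $m\in P(S)$ equivalent to $2m\in P(S)$). Parts (3) and (4) are handled as specialisations in both arguments, so there is nothing to add.
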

\begin{proof} It suffices to address Parts (1) and (2).  
If $n$ is odd, by Proposition \ref{metaf} and Corollary \ref{LSvalues}, we have 

$\widetilde{Lie}_n^S(-1)=-\widetilde{Lie}_n^S(1)=\begin{cases} -1, &n\in P(S)\\ 0, &otherwise. \end{cases}$

If $n$ is even, by Proposition \ref{metaf}, we have
 $\widetilde{Lie}_n^S(-1)=\widetilde{Lie}_{\frac{n}{2}}^S(1)-\widetilde{Lie}_n^S(1).$  Invoking Part (1) of Corollary \ref{LSvalues}, we see that, when $2\in S,$  $\widetilde{Lie}_n^S(1)=1$ if and only if $\widetilde{Lie}_{\frac{n}{2}}^S(1)=1.$  Hence when $2\in S$ and $n$ is even, $\widetilde{Lie}_n^S(-1)=0.$

Now assume $n$ is even and $2\notin S.$ Then $n\notin P(S)$ so $\widetilde{Lie}_n^S(1)=0.$ In this case $\widetilde{Lie}_{\frac{n}{2}}^S(1)\neq 0$ if and only if $\frac{n}{2}\in P(S),$ in which case $\widetilde{Lie}_n^S(-1)=\widetilde{Lie}_{\frac{n}{2}}^S(1)=1.$ 

This establishes Parts (1) and (2), and hence the remaining parts.    
\end{proof}
As an immediate consequence, invoking equations (\ref{metaExt}) and (\ref{metaAltSym}) of Theorem \ref{metathm}, we obtain the exterior power analogue of Theorem \ref{SymAltSymLS}.  
\begin{prop} \label{ExtAltExtLS}Let $S$ be a set of  primes, and let $Lie_n^S$ 
be the symmetric function defined in Theorem \ref{SymAltSymLS}.  Then 
 \begin{equation}\label{ExtLS} E[Lie^S](t)=
\begin{cases} \prod_{ n\in P(S)} (1-t^n p_n)^{-1} \prod_{n\, even, \frac{n}{2}\in P(S)} (1-t^np_n), & 2\notin S\\
\prod_{n\, odd, \, n\in P(S) } (1-t^np_n)^{-1}=\prod_{n\in P(S\backslash\{2\}) } (1-t^np_n)^{-1}, & 2\in S.
\end{cases}
\end{equation}
(Note $n$ is necessarily odd in the first product of the case $2\notin S.$)
\begin{equation}\label{omegaExtLS} \omega(E[Lie^S])(t)=
 \prod_{ n\in P(S)} (1-t^n p_n)^{-1}\prod_{n\, even, \frac{n}{2}\in P(S)} (1+t^np_n),  \text{ provided } 2\notin S
\end{equation}
\begin{equation}\label{AltExtLS} E[\omega(Lie^S)^{alt}](t)=
\begin{cases} \prod_{ n\in P(S)} (1+t^n p_n) \prod_{n\, even, \frac{n}{2}\in P(S)} (1+t^np_n)^{-1}, & 2\notin S\\
\prod_{n\, odd, \, n\in P(S) } (1+t^np_n), & 2\in S.
\end{cases}
\end{equation}
\end{prop}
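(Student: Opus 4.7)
The plan is to substitute $F = Lie^S = \sum_{n \geq 1} Lie_n^S$ into the symmetric function identities \eqref{metaExt} and \eqref{metaAltSym} of Theorem \ref{metathm} and simply read off the exponents using the values $\widetilde{Lie}_m^S(-1)$ already tabulated in Lemma \ref{LSnegvalues}. Exactly as in the proof of Theorem \ref{SymAltSymLS}, evaluating at $v=1$ and then introducing the degree-tracking substitution $p_m \mapsto t^m p_m$ (legitimate because each $Lie_m^S$ is homogeneous of degree $m$) converts \eqref{metaExt} into
\begin{equation*}
E[Lie^S](t) \;=\; \prod_{m \geq 1} (1 - t^m p_m)^{\widetilde{Lie}_m^S(-1)},
\end{equation*}
and \eqref{metaAltSym} into
\begin{equation*}
E[\omega(Lie^S)^{alt}](t) \;=\; \prod_{m \geq 1} (1 + t^m p_m)^{-\widetilde{Lie}_m^S(-1)}.
\end{equation*}

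From here I would split into the two cases $2 \notin S$ and $2 \in S$ according to Lemma \ref{LSnegvalues}. In the first case the exponent $\widetilde{Lie}_m^S(-1)$ equals $-1$ on $m \in P(S)$ (all of which are odd), $+1$ on even $m$ with $m/2 \in P(S)$, and $0$ otherwise; in the second case it equals $-1$ on the odd elements of $P(S)$, which coincide with $P(S \setminus \{2\})$, and $0$ otherwise. Inserting these exponents into the two displays above yields \eqref{ExtLS} and \eqref{AltExtLS} directly.

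For \eqref{omegaExtLS} I would apply the involution $\omega$ to the already-established identity \eqref{ExtLS} in the case $2 \notin S$, using $\omega(p_m) = (-1)^{m-1} p_m$. The parity observation that every $m \in P(S)$ is odd (so each factor $(1 - t^m p_m)^{-1}$ is fixed by $\omega$), combined with the fact that every $n = 2k$ with $k \in P(S)$ is even (so $\omega(1 - t^n p_n) = 1 + t^n p_n$), produces exactly the claimed product.

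The main obstacle, such as it is, is only the bookkeeping of which exponent from Lemma \ref{LSnegvalues} applies to which $m$ in each of the two cases; once those tables are written down correctly, all three identities follow by inspection, with no further plethystic manipulation required.
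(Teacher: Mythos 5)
Your proposal is correct and matches the paper's own proof: both obtain \eqref{ExtLS} and \eqref{AltExtLS} by specialising equations \eqref{metaExt} and \eqref{metaAltSym} of Theorem \ref{metathm} with the exponents $\widetilde{Lie}_m^S(-1)$ from Lemma \ref{LSnegvalues}, and both derive \eqref{omegaExtLS} by applying $\omega$ to \eqref{ExtLS}. Your case-by-case bookkeeping of the exponents is exactly what the paper leaves implicit.
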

\begin{proof} The second equation is clearly a consequence of applying the involution $\omega$ to the first, which is obtained directly from equation (\ref{metaExt}) of Theorem \ref{metathm}.  Similarly the third equation  follows directly from (\ref{metaAltSym}).  
\end{proof}

As in Theorem \ref{SymAltSymLS}, when $S=\emptyset,$ the first and third equations above allow us to  recover the formulas of \cite[Corollary 5.2]{Su1} and \cite[Theorem 4.2]{Su1}, which we restate using the equivalent formulations of the second and the third equations, in order to emphasise the fact that the right-hand side may be written as a nonnegative linear combination of power sums.
Recall that we write 
$C\!onj_n=Lie_n^{\mathcal{P}}=\mathrm{ch} (1\big\uparrow_{C_n}^{S_n}).$ 
\begin{prop}\label{ExtLieConj} \cite[Theorem 4.2 and Corollary 5.2]{Su1} We have the generating functions:
\begin{enumerate}[itemsep=8pt] 
\item $\omega(E[Lie](t))=(1+t^2 p_2)(1-tp_1)^{-1}$
\item $E[\sum_{n\geq 1} C\!onj_n](t)=
\prod_{n\geq 1,\, n\, odd} (1-t^np_n)^{-1}$
\item 
$\sum_{\lambda\in Par} (-1)^{|\lambda|-\ell(\lambda)} H_\lambda[Lie] (t)=\omega(E[\omega(Lie)^{alt}])(t)=(1+tp_1)(1-t^2p_2)^{-1}$
\item 
$E[\sum_{n\geq 1} (-1)^{n-1} \omega(C\!onj_n)](t)=
\prod_{n\geq 1,\, n\, odd} (1+t^np_n)$
\end{enumerate}
\end{prop}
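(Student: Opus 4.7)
\emph{Proof proposal.} The four identities are exactly the specializations of Proposition \ref{ExtAltExtLS} to the two extremal choices $S=\emptyset$ (giving $Lie^S=Lie$) and $S=\mathcal{P}$ (giving $Lie^S=C\!onj$), together with an application of the involution $\omega$ to translate between variants. My plan is to read off the products from Proposition \ref{ExtAltExtLS} in each extremal case, noting that $P(\emptyset)=\{1\}$ and $P(\mathcal{P})=\mathbb{Z}^+$, and that $\omega$ acts on power sums by $\omega(p_{2k})=-p_{2k}$, $\omega(p_{2k+1})=p_{2k+1}$.

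For Parts (1) and (3), I specialize to $S=\emptyset$, so $2\notin S$. Equation (\ref{ExtLS}) collapses to $E[Lie](t)=(1-tp_1)^{-1}(1-t^2p_2)$, since only $n=1$ lies in $P(\emptyset)$ and only $n=2$ is even with $n/2\in P(\emptyset)$; applying $\omega$ flips the sign of the $p_2$ factor and yields Part (1). Similarly, equation (\ref{AltExtLS}) collapses to $E[\omega(Lie)^{alt}](t)=(1+tp_1)(1+t^2p_2)^{-1}$, and applying $\omega$ produces the second equality of Part (3). The first equality of Part (3) is obtained by applying $\omega$ to identity (\ref{metaAltSym}) of Theorem \ref{metathm} with $F=Lie$: this converts the left-hand side $\sum_\lambda (-1)^{|\lambda|-\ell(\lambda)} \omega(H_\lambda[Lie])$ into $\sum_\lambda (-1)^{|\lambda|-\ell(\lambda)} H_\lambda[Lie]$, and the right-hand side $E[\omega(Lie)^{alt}](t)$ into $\omega(E[\omega(Lie)^{alt}])(t)$.

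For Parts (2) and (4), I specialize to $S=\mathcal{P}$, so $2\in S$ and $P(\mathcal{P}\setminus\{2\})$ is the set of odd positive integers. The $2\in S$ branch of equation (\ref{ExtLS}) immediately gives Part (2) as $\prod_{n \text{ odd}}(1-t^np_n)^{-1}$, and the $2\in S$ branch of equation (\ref{AltExtLS}) immediately gives Part (4) as $\prod_{n \text{ odd}}(1+t^np_n)$. The only ``obstacle'' in the whole argument is keeping sign conventions straight when $\omega$ is applied to a product containing a single factor of the form $(1\pm t^2p_2)$; this is bookkeeping rather than genuine computation, since all the real work has already been done in proving Proposition \ref{ExtAltExtLS} and Lemma \ref{LSnegvalues}.
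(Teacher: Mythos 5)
Your proposal is correct and matches the paper's own derivation: the paper likewise obtains all four identities by specializing Proposition \ref{ExtAltExtLS} to $S=\emptyset$ (for the $Lie$ statements) and to its complement, the set of all primes (for the $C\!onj$ statements), using the $\omega$-twisted forms \eqref{omegaExtLS} and \eqref{AltExtLS} where needed. The sign bookkeeping you describe ($\omega(p_1)=p_1$, $\omega(p_2)=-p_2$ applied multiplicatively to the two-factor products) is exactly right.
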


For any subset $T_n$ of partitions of $n,$ denote by $P_{T_n}$ the sum of power-sum symmetric functions $\sum_{\lambda\in T_n} p_\lambda.$
Note the passage from symmetric to exterior powers, for the same representation, in Part (1) below.
\begin{thm}\label{Spositivity2} For a fixed set of primes $S,$ the  sums $P_{T_n}$ are Schur positive 
for the following choices of $T_n$:
\begin{enumerate}
\item If $2\in S,$ $T_n=\{\lambda\vdash n: \lambda_i\, odd,\, \lambda_i\in P(S)\};$ in this case the representation coincides with 
$$H[L^{S\backslash\{2\}}]=E[L^S]=\sum_{\lambda:\lambda_i \in P(S\backslash\{2\})} p_\lambda.$$
\item 
If $2\notin S,$ $T_n$ consists of all partitions $\lambda$ of $n$ such that the parts are (necessarily odd and) in $P(S),$ or the parts are twice an odd number in $P(S)$, the even parts occurring at most once.
\end{enumerate}
\end{thm}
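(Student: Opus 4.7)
The plan is to derive both parts directly from Proposition \ref{ExtAltExtLS}, which already supplies the relevant generating functions; the only real work is to identify the exterior power (or its $\omega$-image) as the indicated sum of power sums, and then invoke the fact that $E[L^S]$ is by construction the Frobenius characteristic of a true graded $S_n$-representation (the exterior algebra on the Schur-positive module $L^S$).

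For Part (1), assuming $2\in S$, I start from the second case of equation \eqref{ExtLS} in Proposition \ref{ExtAltExtLS}, which gives
\[
E[L^S](t)=\prod_{n\in P(S\setminus\{2\})}(1-t^n p_n)^{-1}.
\]
Expanding the geometric series factorwise yields $\sum_{\lambda:\,\lambda_i\in P(S\setminus\{2\})} t^{|\lambda|}\,p_\lambda$. The key observation is that $P(S\setminus\{2\})$ is exactly the set of odd elements of $P(S)$, which matches the description of $T_n$. Comparing with Theorem \ref{SymAltSymLS} applied to $S\setminus\{2\}$ (which has the same product on the right) identifies this with $H[L^{S\setminus\{2\}}](t)$, giving the threefold equality. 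Schur positivity is then automatic because $E[L^S]$ is the Frobenius characteristic of an exterior algebra on a true representation.

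For Part (2), assuming $2\notin S$, I use equation \eqref{omegaExtLS}:
\[
\omega\bigl(E[L^S]\bigr)(t)=\prod_{n\in P(S)}(1-t^n p_n)^{-1}\cdot\prod_{\substack{n \text{ even}\\ n/2\in P(S)}}(1+t^n p_n).
\]
Both factors have nonnegative expansions in the $p_\lambda$ basis. The first factor is $\sum_{\lambda:\,\lambda_i\in P(S)} t^{|\lambda|}p_\lambda$, and since $2\notin S$ every such $\lambda_i$ is odd. The second factor, being a product of $(1+t^n p_n)$ with $n$ ranging over twice-odd integers $n=2k$ with $k\in P(S)$, expands as $\sum_{\mu}t^{|\mu|}p_\mu$ over partitions with distinct even parts of the form $2k$, $k\in P(S)$. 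Multiplying the two series gives precisely $\sum_{\lambda\in T_n} t^{|\lambda|}p_\lambda$ in homogeneous degree $n$.

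To conclude Schur positivity, I note that $E[L^S]$ is Schur positive (as in Part (1), it is the characteristic of an exterior algebra on the true module $L^S$), and therefore $\omega(E[L^S])$ is Schur positive too, since $\omega$ permutes the Schur basis via $\omega(s_\lambda)=s_{\lambda'}$. The only subtlety to double-check is the bookkeeping in Part (2), namely that the two expansions combine without cancellation to give exactly the advertised index set $T_n$; but since one factor involves only odd parts in $P(S)$ and the other only distinct even parts of the form $2k$ with $k\in P(S)$, the contributions to any fixed $p_\lambda$ come from a unique splitting $\lambda=\lambda^{\text{odd}}\sqcup\lambda^{\text{even}}$, so no cancellation can occur and the coefficient of each $p_\lambda$ with $\lambda\in T_n$ is exactly one.
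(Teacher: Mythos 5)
Your argument is correct and follows the paper's own route: both parts are read off from Proposition \ref{ExtAltExtLS}, Part (1) from the second case of \eqref{ExtLS} (identified with $H[L^{S\setminus\{2\}}]$ via Theorem \ref{SymAltSymLS}), and Part (2) from \eqref{omegaExtLS} together with the Schur positivity of exterior powers of the true module $L^S$ and the fact that $\omega$ preserves Schur positivity. The bookkeeping you add for Part (2) (odd parts in $P(S)$ with arbitrary multiplicity, distinct even parts $2k$ with $k\in P(S)$, no cancellation) is exactly the content the paper leaves implicit.
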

\begin{proof}  Part (1) has also been observed in Theorem \ref{Spositivity1}, (2), taking the set of primes to be $S\backslash\{2\}$.  Here it follows from equation (\ref{ExtLS}) of Proposition \ref{ExtAltExtLS}.  In particular we have two ways to realise the representation 
$$\sum_{\lambda:\lambda_i \in P(S\backslash\{2\})} p_\lambda,$$
namely as the symmetric power $H[L^{S\backslash\{2\}}]$ (Theorem \ref{SymAltSymLS}) and also as the exterior power $E[L^S],$ the latter from Proposition \ref{ExtAltExtLS}.

Part (2) follows by extracting the degree $n$ term from the third equation in Proposition \ref{ExtAltExtLS}, noting that we now have exterior powers of Schur positive functions.
\end{proof}

In particular when the set $S$ consists of a single prime $q,$ we have:

\begin{thm}\label{OnePrime} Let $q$ be a fixed prime.  The following  multiplicity-free sums of power sums are the Frobenius characteristics of true representions of $S_n,$ of dimension $n!$
\begin{enumerate}
\item
$\sum_{{\lambda\vdash n}
\atop{ \text{all parts of } \lambda\text{\ are  powers of\ } q} }
p_\lambda \quad \text{(If $q$ is odd,  this representation is self-conjugate.)}$
\item 
$\sum_{\substack{\lambda\vdash n\\(\lambda_i, q)=1\text{ for all }i} }p_\lambda;$ 
\item
$\sum_{\substack{\lambda\vdash n,\lambda_i \text{ odd }\\(\lambda_i, q)=1\text{ for all }i}} p_\lambda, \qquad q \text{ odd}.$

\end{enumerate}
\end{thm}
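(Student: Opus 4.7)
All three statements will be derived as direct specialisations of Theorem \ref{Spositivity1} (equivalently, of equation \eqref{SymLS} in Theorem \ref{SymAltSymLS}). The key observation is that each sum of power sums in question equals the degree $n$ component of $H[L^S](t)$ for a suitably chosen set of primes $S$ (or for its complement $\bar S$); since $L^S$ is a true $S_n$-module by definition, every symmetric power is a true $S_n$-module too, and this gives Schur positivity. The dimension claim will follow in each case from the fact that the coefficient of $p_{1^n}$ in the multiplicity-free sum is $1$, so the dimension equals $z_{1^n} = n!$.

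For part (1), I would take $S = \{q\}$, so that $P(S) = \{1, q, q^2, \ldots\}$ is precisely the set of nonnegative powers of $q$. Equation \eqref{SymLS} then gives
\[ H[L^S](t)|_{\deg n} = \sum_{\lambda \vdash n,\ \lambda_i \in P(S)} p_\lambda, \]
exhibiting the right-hand side as the Frobenius characteristic of the $n$th symmetric power of the true $S_n$-module $L^S$. Since $1 \in P(S)$, the partition $1^n$ appears in the sum and the dimension is $n!$. When $q$ is odd, every $\lambda_i$ is odd, so $|\lambda| - \ell(\lambda)$ is even and $\omega(p_\lambda) = (-1)^{|\lambda|-\ell(\lambda)} p_\lambda = p_\lambda$; hence the representation is self-conjugate.

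For part (2) I would again take $S = \{q\}$, so that $P(\bar S)$ is the set of positive integers coprime to $q$. Applying \eqref{SymLS} to $L^{\bar S}$ (this is exactly Theorem \ref{Spositivity1}(2) in the case $S = \{q\}$) identifies $\sum_{\lambda \vdash n,\ \lambda_i \in P(\bar S)} p_\lambda$ with the degree $n$ component of $H[L^{\bar S}]$, a symmetric power of the true module $L^{\bar S}$; the dimension is again $n!$ because $1 \in P(\bar S)$. For part (3), with $q$ odd, I would take $S = \{2, q\}$. Then $P(\bar S)$ consists of the positive integers whose prime divisors avoid both $2$ and $q$, i.e., the odd integers coprime to $q$. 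Theorem \ref{Spositivity1}(2) (equivalently \eqref{SymLS} applied to $L^{\bar S}$) then realises $\sum_{\lambda \vdash n,\ \lambda_i \in P(\bar S)} p_\lambda$ as a symmetric power of $L^{\bar S}$, and the coefficient of $p_{1^n}$ is $1$, giving dimension $n!$.

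No step presents any serious obstacle, since all the heavy lifting has already been done in Theorems \ref{metathm}, \ref{SymAltSymLS}, and \ref{Spositivity1}. The only points requiring care are matching the combinatorial description of the index set in each of (1), (2), (3) with the correct $P(S)$ or $P(\bar S)$, and hence choosing $S$ correctly, together with the elementary parity observation underlying the self-conjugacy assertion in (1).
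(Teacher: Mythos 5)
Your proposal is correct and matches the paper's (implicit) argument: the theorem is stated there precisely as the specialisation of Theorem \ref{Spositivity1} (equivalently of \eqref{SymLS}) to a single prime, with part (3) obtained from the complementary set containing both $2$ and $q$ exactly as you do, and your dimension count and parity argument for self-conjugacy are the intended ones.
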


With $\ell_n^{(k)}$ denoting the characteristic ${\rm ch}\, (\exp(\frac{2\pi i k}{n})\uparrow_{C_n}^{S_n})$ of the Foulkes character, we have the following contrasting result:
\begin{thm}\label{Su1Thm5.6}\cite[Lemma 5.5, Theorem 5.6]{Su1} If $k$ is any fixed positive integer, then $\ell_n^{(k)}(1)$ is nonzero if and only if $n$ divides $k,$ in which case it equals 1.  Hence one has the Schur positive sum 
\begin{equation*}\sum_{\lambda\vdash n: \lambda_i|k} p_\lambda
=H[\sum_{m\geq 1} \ell_m^{(k)} ]\vert_{{\rm\, deg\,} n}.
\end{equation*}
\end{thm}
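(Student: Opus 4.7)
The plan is to first establish the stated value of $\tilde\ell_n^{(k)}(1)$, and then read off both the generating-function identity and its Schur positivity from the machinery of Theorem \ref{metathm}. By Foulkes's formula (Theorem \ref{Foulkes}) the associated polynomial is
$$\tilde\ell_n^{(k)}(t) \;=\; \frac{1}{n}\sum_{d\mid n}\psi_k(d)\, t^{n/d}, \qquad \psi_k(d) \;=\; \phi(d)\,\frac{\mu(d/(d,k))}{\phi(d/(d,k))},$$
so the value at $t=1$ is the average $\frac{1}{n}\sum_{d\mid n}\psi_k(d)$. As the paper observes immediately after Theorem \ref{Foulkes}, $\psi_k(d)$ is the classical Ramanujan sum $c_d(k)$, so the problem reduces to evaluating $\sum_{d\mid n}c_d(k)$.

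The key step is the identity
$$\sum_{d\mid n} c_d(k) \;=\; \begin{cases} n, & n\mid k,\\ 0, & \text{otherwise}, \end{cases}$$
which I would prove in the style of Lemma \ref{Ramanujan}. Substituting the M\"obius formula $c_d(k)=\sum_{e\mid (d,k)} e\,\mu(d/e)$ and interchanging the order of summation, the outer index $e$ runs over divisors of $\gcd(n,k)$ while the inner sum collapses via $\sum_{m\mid n/e}\mu(m)=[n/e=1]$, picking out only $e=n$, which is a legal value precisely when $n\mid k$. An equivalent and perhaps more transparent route is to recognize $\sum_{d\mid n}c_d(k)$ as the complete character sum $\sum_{j=1}^{n} e^{2\pi i jk/n}$, with contributions grouped by $d=n/\gcd(j,n)$. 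Dividing through by $n$ yields $\tilde\ell_n^{(k)}(1)=[n\mid k]$, which is the first assertion of the theorem.

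For the second assertion I apply equation \eqref{metaSym} of Theorem \ref{metathm} to the sequence $f_m=\ell_m^{(k)}$, specialize the tracking variable $v$ to $1$, and plug in the computation just performed:
$$H\!\left[\sum_{m\geq 1}\ell_m^{(k)}\right] \;=\; \prod_{m\geq 1}(1-p_m)^{-\tilde\ell_m^{(k)}(1)} \;=\; \prod_{m\mid k}(1-p_m)^{-1} \;=\; \sum_{\lambda:\,\lambda_i\mid k} p_\lambda,$$
and the homogeneous degree $n$ component is exactly $\sum_{\lambda\vdash n,\ \lambda_i\mid k} p_\lambda$. Schur positivity is then immediate: each $\ell_m^{(k)}$ is by definition the Frobenius characteristic of an honest induced representation of $S_m$, so the left-hand side is a symmetric power (hence the characteristic) of a true representation of $S_n$.

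The only nontrivial piece of the argument is the Ramanujan-sum evaluation in the second paragraph; this is the expected main obstacle, though its proof is modeled closely on Lemma \ref{Ramanujan} and is essentially a bookkeeping exercise once the M\"obius-inversion formula for $c_d(k)$ is invoked. All the remaining steps are direct appeals to the plethystic framework established in Section \ref{SecPrelim}.
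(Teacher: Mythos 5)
Your proof is correct and follows the same route the paper (and the cited source \cite{Su1}) uses for all results of this type: evaluate the Ramanujan sum to get $\tilde\ell_n^{(k)}(1)=[\,n\mid k\,]$, exactly as in Lemma \ref{Ramanujan}, and then feed this into equation \eqref{metaSym} of Theorem \ref{metathm} at $v=1$. The key identity $\sum_{d\mid n}c_d(k)=n[\,n\mid k\,]$ is verified correctly by either of your two arguments, and the Schur positivity conclusion is justified since each $\ell_m^{(k)}$ is an honest induced character.
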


The  important special case of the single prime 2 was mentioned in the Introduction, and is discussed in detail in   {\cite{SuLieSup2}}. 
Recall from Theorem~\ref{ThrallPBW} the formula of Thrall  decomposing  the regular representation into a sum of higher Lie modules, and the formula of Cadogan giving the plethystic inverse of the homogeneous symmetric functions $H$. Similarly, the higher \textit{exterior} modules for  $Lie_n^{(2)}$  give a decomposition of the regular representation, and describe the plethystic inverse of the elementary symmetric functions $E$.  Here we record these facts:

\begin{thm}\label{PlInv-E}({\cite{SuLieSup2}}) Let $Lie^{(2)}_n$ be the Frobenius characteristic of the induced representation $\exp(\frac{2i\pi}{n}\cdot 2^k)\large\uparrow_{C_n}^{S_n},$  where $k$ is the largest power of 2 which divides $n.$ Then we have the following generating functions:
\begin{enumerate}[itemsep=8pt]
\item \label{ExtLieSup2Reg}\qquad\qquad\qquad\qquad\qquad
$ E\left[\sum_{n\geq 1}  Lie^{(2)}_n\right ](t) = (1-tp_1)^{-1}.$

\text{Equivalently,}
\qquad\quad
$H^{\pm}\left[\sum_{n\geq 1} Lie^{(2)}_n\right](t)=1-tp_1.$

\item\label{PlInv-ELieSup2}$E\left[\sum_{n\geq 1} (-1)^{n-1}\omega(Lie^{(2)}_n)\right](t)=1+tp_1.$

\end{enumerate}
\end{thm}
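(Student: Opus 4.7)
The plan is to derive Theorem \ref{PlInv-E} as a direct specialization of Proposition \ref{ExtAltExtLS} applied to the singleton prime set $S = \{2\}$. The key observation is that for this $S$ we have $P(S) = \{1, 2, 4, 8, \ldots\}$ (the powers of $2$), while $P(S \setminus \{2\}) = P(\emptyset) = \{1\}$, and the set of odd elements of $P(\{2\})$ is again $\{1\}$. Consequently, each of the infinite products appearing on the right-hand sides of Proposition \ref{ExtAltExtLS} collapses to a single binomial factor, and this is exactly what the theorem asserts.

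For Part (1), I would invoke equation (\ref{ExtLS}) in the branch $2 \in S$: with $S = \{2\}$ this specializes to
\[
E\!\left[\sum_{n \geq 1} Lie^{(2)}_n\right]\!(t) \;=\; \prod_{n \in P(\emptyset)}(1 - t^n p_n)^{-1} \;=\; (1 - t p_1)^{-1}.
\]
The equivalent $H^{\pm}$ formulation then follows at once from Lemma \ref{pm}(2), whose equivalence $E[F] = K \iff H^{\pm}[F] = 1/K$ applied to $K = (1 - t p_1)^{-1}$ yields $H^{\pm}\!\left[\sum_n Lie^{(2)}_n\right]\!(t) = 1 - t p_1$.

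For Part (2), I would invoke equation (\ref{AltExtLS}) in the branch $2 \in S$: with $S = \{2\}$ this specializes to
\[
E\!\left[\omega\!\left(\textstyle\sum_{n\geq 1} Lie^{(2)}_n\right)^{\!alt}\,\right]\!(t) \;=\; \prod_{\substack{n \text{ odd} \\ n \in P(\{2\})}}(1 + t^n p_n) \;=\; 1 + t p_1.
\]
Unpacking the definition $\omega(F)^{alt} = \sum_n (-1)^{n-1}\omega(f_n)$ identifies the argument of $E$ with the series $\sum_{n \geq 1}(-1)^{n-1}\omega(Lie^{(2)}_n)$ of the theorem, completing the derivation.

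There is no genuine obstacle in this argument. The substantive work, namely the computations of $\widetilde{Lie}^S_n(\pm 1)$ in Corollary \ref{LSvalues} and Lemma \ref{LSnegvalues} and the invocation of Theorem \ref{metathm}, is already packaged into Proposition \ref{ExtAltExtLS}. What Theorem \ref{PlInv-E} expresses is the striking fact that $S = \{2\}$ is precisely the choice of prime set that forces both exterior-power generating functions for $\sum_n Lie_n^S$ and its alternating $\omega$-twist to collapse to a single binomial, thereby exhibiting $\sum_n Lie^{(2)}_n$ and $\sum_n (-1)^{n-1}\omega(Lie^{(2)}_n)$ as the plethystic inverses of $E$ linear in $tp_1$.
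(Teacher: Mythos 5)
Your derivation is correct and is exactly the route the paper's machinery intends: the theorem is recorded here with a citation to \cite{SuLieSup2}, but within this paper it is precisely the specialization $S=\{2\}$ of Proposition~\ref{ExtAltExtLS} (noting $P(\{2\})=\{2^k\}_{k\ge 0}$, so the odd part of $P(\{2\})$ and $P(\emptyset)$ both reduce to $\{1\}$), together with Lemma~\ref{pm}(2) for the $H^{\pm}$ reformulation. All specializations of the products and the identification of $\omega(Lie^{(2)})^{alt}$ with $\sum_{n\ge 1}(-1)^{n-1}\omega(Lie^{(2)}_n)$ check out.
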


\section{A formula for $\prod_{n\in T} (1-p_n)^{-1}$}\label{SecfnT}

In this section we explore, for a fixed subset $T$ of positive integers,   the sum of power sums resulting from the product 
$\prod_{n\in T} (1-p_n)^{-1}.$
(This product is 1 if $T$ is the empty set.)  We will further generalise the results of the previous section, introducing a new class of symmetric functions indexed by $T.$  These results were announced in \cite{SuFPSAC2019}.

\begin{defn}\label{def6.1} Fix a nonempty subset $T$ of the positive integers.  For each positive integer $d,$ define a function $\psi^T$ by $\psi^T(d):=\sum_{m|d,\, m\in T} m\,\mu\left(\frac{d}{m}\right).$
\end{defn}
\begin{defn}\label{def6.2} For each nonempty subset $T$ of positive integers, 
 define a sequence of (possibly virtual) representations  indexed by the subset $T,$ with Frobenius characteristic 
$$
f_n^T:=\dfrac{1}{n}\sum_{d|n} \psi^T(d) p_d^{\frac{n}{d}},$$
as well as associated polynomials $\tilde f_n^T(t):=\frac{1}{n}\sum_{d|n} \psi^T(d) t^{\frac{n}{d}}$
 as in~\eqref{definepolyf_n}. Define the symmetric function series $F^T:=\sum_{n\geq 1} f_n^T.$  Finally let 
 $p^T:=\sum_{n\in T} p_n.$
\end{defn}

These definitions imply the following facts, whose relevance will become clear in the remarks following Corollary~\ref{cor6.5}.
\begin{lem}\label{lem6.3} $\tilde f_n^T(1)=1$ if and only if $n\in T,$ and $\tilde f_n^T(1)=0$ otherwise.
\end{lem}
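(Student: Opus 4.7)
The plan is to unwind the definitions and perform a short Möbius inversion. By Definition~\ref{def6.2}, evaluating the polynomial at $t=1$ gives
\[
\tilde f_n^T(1) = \frac{1}{n}\sum_{d\mid n}\psi^T(d),
\]
so the task reduces to showing that $\sum_{d\mid n}\psi^T(d)$ equals $n$ when $n\in T$ and vanishes otherwise.

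Next, I would substitute Definition~\ref{def6.1} and interchange the order of summation. Writing every divisor $d$ of $n$ uniquely as $d = m k$ with $m\in T$, $m\mid d$, and $k = d/m$, the condition $d\mid n$ together with $m\mid d$ becomes $m\mid n$ and $k\mid n/m$. This gives
\[
\sum_{d\mid n}\psi^T(d) = \sum_{d\mid n}\sum_{\substack{m\mid d\\ m\in T}} m\,\mu\!\left(\frac{d}{m}\right) = \sum_{\substack{m\mid n\\ m\in T}} m \sum_{k\mid n/m}\mu(k).
\]

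Now I invoke the standard Möbius identity $\sum_{k\mid N}\mu(k) = [N=1]$. The inner sum is therefore $1$ precisely when $n/m = 1$, i.e.\ $m=n$, and $0$ otherwise. Hence the outer sum collapses to $n\cdot[n\in T]$: it contributes $n$ exactly when $n$ itself is a member of $T$, and $0$ in all other cases. Dividing by $n$ yields $\tilde f_n^T(1) = [n\in T]$, as claimed.

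There is no real obstacle here; the only point to watch is that the reindexing $d\mapsto (m,k)$ is a bijection between divisors of $n$ with a marked divisor $m\in T$ and pairs $(m,k)$ with $m\mid n$, $m\in T$, and $k\mid n/m$, which is immediate. The lemma is thus essentially a one-line Möbius inversion after the definitions are expanded.
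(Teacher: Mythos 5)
Your proof is correct and is essentially the paper's argument: the paper observes that $\psi^T(n)=\sum_{d\mid n}\mu(n/d)\,d\,\delta(d\in T)$ and invokes M\"obius inversion to get $\sum_{d\mid n}\psi^T(d)=n\,\delta(n\in T)$, whereas you carry out that inversion explicitly by interchanging the double sum and using $\sum_{k\mid N}\mu(k)=[N=1]$. Same computation, just unpacked.
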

\begin{proof} Let us write $\delta(m\in T)$ for the indicator function of the set $T,$ so that $\delta(m\in T)=1$ if and only if $m\in T,$ and is zero otherwise.

By defnition of $\psi^T$, we have 
$\psi^T(n)=\sum_{d|n}\mu(\tfrac{n}{d})\ d\,\delta(d\in T).$
Hence M\"obius inversion gives 
$n\delta(n\in T)=\sum_{d|n}\psi^T(d)
=n\, \tilde f_n^T(1),$
i.e. $\tilde f_n^T(1)=\delta(n\in T)$ as claimed.
\end{proof}

With this lemma, we can now determine a formula for the \lq\lq higher $F^T$ modules":
\begin{thm}\label{thm6.4} Let $T$ be a nonempty subset of the positive integers.  Then:
\begin{equation}\label{thm6.4eqn1} H[F^T]=\prod_{n\in T} (1-p_n)^{-1}
\end{equation}
\begin{equation}\label{thm6.4eqn2} F^T=p^T[Lie]=\sum_{m\in T} Lie[p_m],  \ \mathrm{or\ equivalently\ } 
f_n^T=\sum_{\stackrel{m\in T}{m|n}} Lie_{\frac{n}{m}}[p_m].
\end{equation}
If $G^T=\sum_{k\geq 0}\sum_{m\in T} Lie[p_{m\cdot 2^k}],$ then 
\begin{equation}\label{thm6.4eqn3}
E[G^T]=\prod_{n\in T} (1-p_n)^{-1}=H[F^T].
\end{equation}
\end{thm}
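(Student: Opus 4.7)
The plan is to prove the three identities in the stated order; (1) will fall out of (2) almost for free, while (3) requires separately identifying $E[Lie]$ in closed form.

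For Part (2), I would substitute the definition of $\psi^T$ directly into the formula for $f_n^T$ and swap the two summations. Writing each divisor $d$ of $n$ with $m\mid d$ as $d=me$ with $e\mid n/m$, one gets
\[
f_n^T \;=\; \frac{1}{n}\sum_{d\mid n}\!\Big(\sum_{\substack{m\in T\\ m\mid d}} m\,\mu(d/m)\Big)\,p_d^{n/d} \;=\; \sum_{\substack{m\in T\\ m\mid n}}\frac{1}{n/m}\sum_{e\mid n/m}\mu(e)\,p_{em}^{(n/m)/e} \;=\; \sum_{\substack{m\in T\\ m\mid n}} Lie_{n/m}[p_m],
\]
the last equality being \eqref{LieConj} composed with the plethystic substitution $p_e\mapsto p_{em}$. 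Summing over $n$ and using linearity of plethysm in the first argument together with $p_m[Lie]=Lie[p_m]$ gives $F^T = p^T[Lie] = \sum_{m\in T} Lie[p_m]$.

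For Part (1), the cleanest route applies Theorem~\ref{metathm}\,\eqref{metaSym} at $v=1$ to $F=F^T$ (which has the form~\eqref{definef_n} with $\psi=\psi^T$): the exponent $\tilde f_m^T(1)$ equals $1$ if $m\in T$ and $0$ otherwise by Lemma~\ref{lem6.3}, giving~\eqref{thm6.4eqn1} at once. Equivalently, one can combine Part~(2) with Thrall's identity $H[Lie]=(1-p_1)^{-1}$ (Theorem~\ref{ThrallPBWCadoganSolomon}) and the multiplicativity $H[A+B]=H[A]H[B]$.

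For Part (3), the key is to compute $E[Lie]$ via Theorem~\ref{metathm}\,\eqref{metaExt}: since $\tilde{Lie}_m(1)=\delta_{m,1}$, Proposition~\ref{metaf} forces $\tilde{Lie}_m(-1)=0$ for $m\geq 3$, with $\tilde{Lie}_1(-1)=-1$ and $\tilde{Lie}_2(-1)=1$. Hence $E[Lie]=(1-p_2)/(1-p_1)$, and plethystic associativity promotes this to $E[Lie[p_m]] = (E[Lie])[p_m] = (1-p_{2m})/(1-p_m)$. The product then telescopes formally,
\[
\frac{1}{1-p_m} \;=\; \prod_{k\geq 0}\frac{1-p_{2^{k+1}m}}{1-p_{2^km}} \;=\; \prod_{k\geq 0} E[Lie[p_{2^km}]],
\]
so that $E[G^T] = \prod_{m\in T}\prod_{k\geq 0} E[Lie[p_{2^km}]] = \prod_{m\in T}(1-p_m)^{-1} = H[F^T]$ by~\eqref{thm6.4eqn1}. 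The main subtlety to address is that the telescoping must be read in the graded completion of the ring of symmetric functions, where in each fixed degree $N$ the partial products stabilize once $2^{K+1}m>N$; with that in place the substantive content of Part~(3) is the clean identity $E[Lie]=(1-p_2)/(1-p_1)$, whose dyadic iteration exactly accounts for the definition of $G^T$.
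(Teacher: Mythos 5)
Your proposal is correct, and it diverges from the paper's proof in instructive ways for parts (2) and (3). For \eqref{thm6.4eqn1} you do exactly what the paper does (Theorem~\ref{metathm} plus Lemma~\ref{lem6.3}). For \eqref{thm6.4eqn2} the paper works on the level of $\exp(F^T)$, using Proposition~\ref{Su1Prop3.1} to write it as $\prod_d(1-p_d)^{-\psi^T(d)/d}$ and then rearranging the infinite product via $d=rm$; your argument performs the same rearrangement directly on the double sum defining $f_n^T$ (interchanging $d$ and $m$, setting $d=me$) and reads off $Lie_{n/m}[p_m]$ from \eqref{LieConj}. The two are the same combinatorial interchange, but yours avoids the exp/log machinery entirely and is checked degree by degree, which is arguably more elementary. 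For \eqref{thm6.4eqn3} the paper simply invokes the general transfer principle of Proposition~\ref{Pleth3} (namely $H[F]=E[G]$ iff $G=\sum_{k\ge 0}F[p_{2^k}]$, itself a consequence of $E=H[p_1-p_2]$), whereas you re-derive the needed special case from scratch: the computation $E[Lie]=(1-p_2)(1-p_1)^{-1}$ via $\widetilde{Lie}_1(-1)=-1$, $\widetilde{Lie}_2(-1)=1$, $\widetilde{Lie}_m(-1)=0$ for $m\ge 3$ (all correctly obtained from Proposition~\ref{metaf}), followed by associativity and the dyadic telescoping $\prod_{k\ge 0}(1-p_{2^{k+1}m})(1-p_{2^km})^{-1}=(1-p_m)^{-1}$. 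Your version makes visible \emph{why} the powers of $2$ appear in $G^T$ and is self-contained, at the cost of redoing work the paper has packaged once and for all in Proposition~\ref{Pleth3}; you are also right to flag that the telescoping must be interpreted in the graded completion, a point the paper leaves implicit.
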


\begin{proof} 
Equation (\ref{thm6.4eqn1}) is immediate from Theorem \ref{metathm} and Lemma \ref{lem6.3}. 
 From Proposition \ref{Su1Prop3.1}, we have 
\begin{align*}\exp (F^T)&= \prod_{d\geq 1} (1-p_d)^{-\frac{1}{d}\psi^T(d)} =\prod_{d\geq 1} (1-p_d)^{-\sum_{m|d,\, m\in T} \frac{m}{d}\mu(\frac{d}{m})} \\
&=\prod_{d\geq 1} \prod_{m|d,\, m\in T} (1-p_d)^{- \frac{m}{d}\mu(\frac{d}{m})}
=\prod_{ m\in T}\prod_{r\geq 1}(1-p_{rm})^{- \frac{1}{r}\mu(r)}, \mathrm{\ putting\ }d=rm\\
&=\prod_{ m\in T}\prod_{r\geq 1}(1-p_{r})^{- \frac{1}{r}\mu(r)}[p_m]
=\prod_{ m\in T} \exp(Lie[p_m] )=\exp\sum_{m\in T} Lie[p_m],
\end{align*}
where we have used the first equation in Proposition \ref{Su1Prop3.1} for $F=Lie.$  Equation (\ref{thm6.4eqn2}) follows.
Equation (\ref{thm6.4eqn3})  is a consequence of Proposition \ref{Pleth3}.
\end{proof}

\begin{cor}\label{cor6.5} If either $F^T$ or $G^T$ is Schur positive, then so is 
$$\prod_{n\in T} (1-p_n)^{-1}=\sum_{\stackrel{\lambda\in Par}{\lambda_i\in T}}p_\lambda.$$
\end{cor}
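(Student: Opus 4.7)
The plan is to deduce this directly from Theorem \ref{thm6.4}, using nothing more than the observation that plethysm of a Schur positive symmetric function by the complete homogeneous (or elementary) symmetric functions yields a Schur positive result.

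First I would verify the combinatorial identity
\begin{equation*}
\prod_{n\in T} (1-p_n)^{-1} = \sum_{\stackrel{\lambda\in Par}{\lambda_i\in T}} p_\lambda
\end{equation*}
by expanding each factor as a geometric series $(1-p_n)^{-1}=\sum_{k\geq 0} p_n^k$ and collecting monomials: the product runs over all functions $n\mapsto m_n$ from $T$ to the nonnegative integers with finite support, and each such function encodes the unique partition $\lambda$ whose parts in $T$ have multiplicities $m_n$. This identity is purely formal and requires no representation theory.

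Next, by equation (\ref{thm6.4eqn1}) of Theorem \ref{thm6.4}, this same sum equals $H[F^T]=\sum_{r\geq 0} h_r[F^T]$. If $F^T$ is Schur positive, i.e.\ each $f_n^T$ is the Frobenius characteristic of a genuine $S_n$-module $V_n$, then $h_r[F^T]$ is the Frobenius characteristic of the $r$-th symmetric power (in the sense of wreath products) of $\bigoplus_n V_n$, and is therefore Schur positive. Summing over $r\geq 0$ preserves Schur positivity, giving the conclusion. Similarly, by equation (\ref{thm6.4eqn3}), the same product equals $E[G^T]=\sum_{r\geq 0} e_r[G^T]$; if $G^T$ is Schur positive, then each $e_r[G^T]$ is the characteristic of an exterior power of a genuine module, and hence Schur positive.

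There is essentially no obstacle: the entire content of the corollary is packaged inside Theorem \ref{thm6.4}, and the only supplementary fact needed is the standard representation-theoretic interpretation of $h_r[\cdot]$ and $e_r[\cdot]$ as symmetric and exterior powers, ensuring that these operations preserve Schur positivity of a formal series. The only mild subtlety worth flagging explicitly is that $F^T$ and $G^T$ are formal sums of homogeneous symmetric functions of increasing degree, so that $h_r[F^T]$ and $e_r[G^T]$ are each well-defined as finite symmetric functions in every fixed total degree $n$.
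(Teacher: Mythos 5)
Your proposal is correct and follows exactly the route the paper intends: the paper states Corollary \ref{cor6.5} without proof, treating it as immediate from Theorem \ref{thm6.4} together with the standard fact that $h_r[\,\cdot\,]$ and $e_r[\,\cdot\,]$ applied to characteristics of genuine modules yield characteristics of genuine modules (symmetric and exterior powers of induced wreath-product representations). Your explicit verification of the geometric-series expansion and the degreewise well-definedness of $H[F^T]$ and $E[G^T]$ simply fills in details the paper leaves implicit.
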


The following remarks will explain the connection between the functions $f_n^T$ and the work of Section 3.  
\begin{enumerate}
\item If $T=\{1\},$ then $\psi^T(d)=\mu(d)$ and $f_n^T$ corresponds to the representation $Lie_n.$
\item If $T$ is the set of all positive integers, then $\psi^T(d)=\phi(d)$ by M\"obius inversion of the well-known identity $m=\sum_{d|m}\phi(d).$  Thus $f_n^T$ is the characteristic of the conjugacy action
$\textbf{ 1}\uparrow_{C_n}^{S_n},$ i.e. $f_n^T=C\!onj_n.$
\item Fix a set $S$ of primes.  Let $T$ be the set of all integers whose prime factors are all in $S.$  Then clearly if $d\in T,$ $\psi^T(d)=\phi(d)$ by the identity used in (2). Otherwise $d=Q_d\ell_d$ with $Q_d\in T$ and $\ell_d$ relatively prime to $Q_d$ and also relatively prime to all integers in $T.$ Hence, since  $\mu$ is multiplicative,
Definition \ref{def6.1} gives 
$$\psi^T(d)=\sum_{m\in T} m \mu(Q_d/m) \mu(\ell_d)
=\mu(\ell_d)\cdot \sum_{m\in T} m \mu(Q_d/m)=\mu(\ell_d)\psi^T(Q_d),$$ 
 Since $Q_d\in T,$  we obtain  $\psi^T(d)=\mu(\ell_d)\phi(Q_d),$ which is precisely the formula given by (\ref{FoulkesPrimes1}). Thus $f_n^T=Lie_n^S.$
\item Fix a set $S$ of primes. Now let $T$ be the set of all integers relatively prime to every element of $S$ (so $T$ is the set of integers none of whose prime factors is in $S$). Exactly as above, we see that Definition \ref{def6.1} reduces to (\ref{FoulkesPrimes2}), and hence $f_n^T$ is the characteristic $Lie_n^{\bar S}.$
\end{enumerate}

>From Theorem \ref{thm6.4} and the preceding observations, we have the following decompositions of the representations $C\!onj_n,$ $Lie_n^{(q)}.$  Only ~\eqref{prop6.6eqn4a}, recorded here for completeness, requires comment; it follows from ~\eqref{prop6.6eqn4} and Proposition~\ref{Pleth1} in the next section.  
\begin{prop}\label{prop6.6}
 \begin{equation}\label{prop6.6eqn1}\sum_{m\geq 1} p_m[Lie]=\sum_{n\geq 1} C\!onj_n;
\end{equation}
\begin{equation}\label{prop6.6eqn2}\sum_{m\geq 1} p_m=\sum_{n\geq 1}C\!onj_n[\sum_{r\geq 1}(-1)^{r-1} e_r]
\end{equation}
The plethystic inverse of $ C\!onj$ is 
\begin{equation}\label{prop6.6eqn3} (\sum_{n\geq 1} C\!onj_n)^{\langle -1\rangle}
= \sum_{r\geq 1}(-1)^{r-1} e_r[\sum_{n\geq 1} \mu(n) p_n].
\end{equation}
Let $q$ be prime, and let $n=\ell q^k $ 
where $(\ell, q)=1.$ Then 
\begin{equation}\label{prop6.6eqn4} Lie_n^{(q)} =\sum_{r=0}^k Lie_{\ell q^{k-r}}[p_{q^r}].\end{equation}
\begin{equation}\label{prop6.6eqn4a} Lie^{(q)} =\sum_{r\ge 0} Lie[p_{q^r}] \text{ and hence } Lie=Lie^{(q)}[p_1-p_q].
\end{equation}
The plethystic inverse of $Lie^{(q)}$ is 
\begin{equation}\label{prop6.6eqn5}\sum_{n\geq 1}(Lie_n^{(q)})^{\langle -1\rangle}
=Lie^{\langle -1\rangle}[p_1-p_q]=(\sum_{r\geq 1} (-1)^{r-1} e_r)[p_1-p_q]. 
\end{equation}

\end{prop}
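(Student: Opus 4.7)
The plan is to derive all six identities from Theorem~\ref{thm6.4}, the plethystic-inverse reversal rule $(G[H])^{\langle -1\rangle} = H^{\langle -1\rangle}[G^{\langle -1\rangle}]$, and three key plethystic inverses. Identities (\ref{prop6.6eqn1}) and (\ref{prop6.6eqn4}) are immediate specializations of (\ref{thm6.4eqn2}): for (\ref{prop6.6eqn1}), take $T = \{1,2,3,\dots\}$, so that Remark~(2) following Corollary~\ref{cor6.5} identifies $f_n^T = C\!onj_n$ and $p^T = \sum_{m\geq 1} p_m$; for (\ref{prop6.6eqn4}), take $T = P(\{q\})$, so that Remark~(3) identifies $f_n^T = Lie_n^{(q)}$ and the divisors of $n = \ell q^k$ in $T$ are precisely $q^0, q^1, \dots, q^k$. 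Summing (\ref{prop6.6eqn4}) over all $n \geq 1$ and swapping the order of summation (set $m = n/q^r$) gives the first half of (\ref{prop6.6eqn4a}): $Lie^{(q)} = \sum_{r\geq 0} Lie[p_{q^r}]$. Using $Lie[p_{q^r}] = p_{q^r}[Lie]$ (both equal the $q^r$th Adams operation on $Lie$) together with outer-additivity of plethysm, this rewrites as $Lie^{(q)} = T[Lie]$, where $T := \sum_{r\geq 0} p_{q^r}$.

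The three needed plethystic inverses are: (a)~$Lie^{\langle -1\rangle} = \sum_{r\geq 1}(-1)^{r-1}e_r$; (b)~$\bigl(\sum_{m\geq 1} p_m\bigr)^{\langle -1\rangle} = \sum_{n\geq 1}\mu(n) p_n$; and (c)~$T^{\langle -1\rangle} = p_1 - p_q$. For (a), apply Lemma~\ref{pm}(2) with $G = (1-p_1)^{-1}$ to Thrall's identity $H[Lie] = (1-p_1)^{-1}$ from Theorem~\ref{ThrallPBWCadoganSolomon}, yielding $\sum_{r\geq 1}(-1)^{r-1}e_r[Lie] = p_1$. Inverse (b) is a classical M\"obius-inversion identity that I would cite as Proposition~Pleth1 of Section~\ref{SecPleth}. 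Inverse (c) is a telescoping calculation: $(p_1 - p_q)[T] = T - p_q[T] = T - (T - p_1) = p_1$. The ``hence'' clause of (\ref{prop6.6eqn4a}) relating $Lie$ to $Lie^{(q)}$ via $p_1 - p_q$ now follows from $Lie^{(q)} = T[Lie]$ combined with inverse (c), packaged via Proposition~Pleth1.

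The remaining identities follow by routine plethystic composition. For (\ref{prop6.6eqn2}), apply plethysm by $Lie^{\langle -1\rangle}$ on the right of (\ref{prop6.6eqn1}) and use associativity together with $Lie[Lie^{\langle -1\rangle}] = p_1$:
\begin{equation*}
C\!onj[Lie^{\langle -1\rangle}] = \left(\sum_{m\geq 1} p_m\right)\left[Lie[Lie^{\langle -1\rangle}]\right] = \left(\sum_{m\geq 1} p_m\right)[p_1] = \sum_{m\geq 1} p_m.
\end{equation*}
For (\ref{prop6.6eqn3}), apply the reversal rule to $C\!onj = (\sum_m p_m)[Lie]$ using (a) and (b):
\begin{equation*}
C\!onj^{\langle -1\rangle} = Lie^{\langle -1\rangle}\left[\left(\sum_{m\geq 1} p_m\right)^{\langle -1\rangle}\right] = \left(\sum_{r\geq 1}(-1)^{r-1}e_r\right)\left[\sum_{n\geq 1}\mu(n)\, p_n\right].
\end{equation*}
For (\ref{prop6.6eqn5}), apply the reversal rule to $Lie^{(q)} = T[Lie]$ using (a) and (c): $(Lie^{(q)})^{\langle -1\rangle} = Lie^{\langle -1\rangle}[T^{\langle -1\rangle}] = Lie^{\langle -1\rangle}[p_1 - p_q]$. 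The main subtlety throughout is distinguishing outer from inner plethystic arguments and applying the reversal rule in the correct order.
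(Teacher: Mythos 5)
Your proposal is correct and follows essentially the same route as the paper, which itself only remarks that all parts follow from Theorem~\ref{thm6.4}, the identifications of $f_n^T$ in the remarks after Corollary~\ref{cor6.5}, and Proposition~\ref{Pleth1}; your write-up is a correctly fleshed-out version of exactly that argument, including the key use of $p_1-p_q$ and $\sum_{k\ge 0}p_{q^k}$ as plethystic inverses. One trivial slip: the identity $\sum_{r\geq 1}(-1)^{r-1}e_r[Lie]=p_1$ comes from part (1) of Lemma~\ref{pm} (the $H[F]=G$ case), not part (2).
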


The construct of Definition \ref{def6.2} allows us to remove the restriction that $q$ be prime, as follows.  Let $k\geq 2$ be any positive integer, and take $T$ to be the set of all nonnegative powers of $k.$ In this case Theorem \ref{thm6.4} gives
\begin{equation}\label{eqn6.9} H[\sum_{n\geq 1} f_n^T]=\prod_{r\geq 0} (1-p_{k^r})^{-1}, \qquad \sum_{n\geq 1} f_n^T=\sum_{r\geq 0} p_{k^r}[Lie].
\end{equation}
By inverting this equation plethystically, we obtain the recurrence 
\begin{equation}\label{eqn6.10} \text{For } k\geq 2, \qquad f_n^T=\begin{cases}  Lie_n+f_{\frac{n}{k}}[p_k], & k|n;\\
                          Lie_n, &\mathrm{otherwise},\\
\end{cases}
\end{equation}

However computations show that for $k=4,$ $f_n^T$ is not Schur positive when $n=4, 16,$ and the degree 16 term in the product 
$\prod_{r\geq 0} (1-p_{4^r})^{-1}$ is not Schur positive.  In both cases it is the sign representation that appears with coefficient $(-1).$

\begin{conj} For any {odd} positive integer $k,$ $f_n^T$ as defined above is Schur positive.  
\end{conj}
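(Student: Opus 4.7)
Plan. The natural first move is strong induction on $n$ using the recurrence (\ref{eqn6.10}): if $k\nmid n$ then $f_n^T=Lie_n$, which is already Schur positive, and if $k\mid n$ then $f_n^T = Lie_n + f_{n/k}^T[p_k]$. Equivalently, one may work with the closed form $f_n^T=\sum_{r\ge 0,\ k^r\mid n} Lie_{n/k^r}[p_{k^r}]$ from Theorem~\ref{thm6.4}. When $k=q$ is an odd prime, remark~(3) following Corollary~\ref{cor6.5} identifies $f_n^T$ with $Lie_n^{(q)}$, which is the Frobenius characteristic of the induced representation $\exp(\tfrac{2\pi i Q_n}{n})\!\uparrow_{C_n}^{S_n}$ and hence automatically Schur positive; so the genuinely new content lies in odd composite~$k$.

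For composite odd $k$ one would like to realize $f_n^T$ as a nonnegative integer combination of the Foulkes characters $\ell_n^{(r)}$, since both $f_n^T$ and each $\ell_n^{(r)}$ are class functions on $S_n$ supported on cycle types $(d^{n/d})$ for $d\mid n$. Expanding $\psi^T(d)=\sum_r c_r\,\psi_r(d)$ in the basis of Ramanujan sums from Theorem~\ref{Foulkes} solves uniquely for rational $c_r$. However a direct check in the first composite case $k=n=9$ gives
\[
f_9^T \;=\; \ell_9^{(1)} - \ell_9^{(3)} + \ell_9^{(9)} \;=\; Lie_9 - \ell_9^{(3)} + C\!onj_9,
\]
so the coefficient $c_3=-1$: $f_n^T$ is not in general a nonnegative sum of Foulkes inductions, and a subtler argument is required.

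The approach I would then pursue is to compute the multiplicity of each $s_\lambda$ in $f_n^T$ via the combinatorial part of Foulkes' theorem (Theorem~\ref{Foulkes}(2)), which expresses the multiplicity of $s_\lambda$ in $\ell_n^{(r)}$ as the number of SYT of shape $\lambda$ with major index $\equiv r\pmod n$. Combined with the rational expansion $f_n^T=\sum_d c_d\,\ell_n^{(d)}$, the conjecture becomes an explicit nonnegativity assertion: a certain $\mathbb{Z}$-linear combination of SYT counts stratified by $\mathrm{maj}\pmod n$ should be a nonnegative integer for every shape $\lambda\vdash n$. The main obstacle, and the heart of any proof, is precisely this signed nonnegativity; the failure at $k=4$ (where the sign representation appears with multiplicity $-1$ in $f_4^{T_4}$) shows that any argument must genuinely use the oddness of~$k$, presumably via a parity argument on tableau descent statistics or a sign-reversing involution on SYT weighted by residue of $\mathrm{maj}$. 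A more ambitious alternative is to exhibit an explicit subgroup $H\supseteq C_n$ of $S_n$ and a virtual $H$-character whose induction realizes $f_n^T$ as a true representation, sidestepping the signed decomposition entirely --- but this appears to require a genuinely new construction beyond Foulkes-style induction from~$C_n$, and I expect it to be the crux of the problem.
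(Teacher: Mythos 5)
This statement is an open conjecture in the paper: the author offers no proof, and the surrounding text only reports computational evidence (together with the explicit failure at even $k=4$). Your proposal, by its own admission, is also not a proof. What you have written is a correct and useful reconnaissance: the reduction of the case $k=q$ an odd prime to the induced representation $Lie_n^{(q)}$ via remark (3) after Corollary~\ref{cor6.5} is right, and your computation $f_9^T = Lie_9 + p_9 = \ell_9^{(1)} - \ell_9^{(3)} + \ell_9^{(9)}$ correctly shows that $f_n^T$ need not be a nonnegative combination of Foulkes characters, so that route is genuinely closed off. But the step that would constitute the proof --- showing that the signed combination of major-index-stratified SYT counts is nonnegative for every shape, or exhibiting an actual module realizing $f_n^T$ --- is exactly the step you defer (``I expect it to be the crux of the problem''). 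No sign-reversing involution is constructed, no parity argument is given, and no candidate module is proposed. The gap is therefore the entire content of the conjecture.

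To be clear about what is and is not known here: the only cases settled by the paper's machinery are $k$ an odd prime (where $f_n^T=Lie_n^{(q)}$ is induced from $C_n$ and hence a true module, by Theorem~\ref{SymAltSymLS} and Definition~\ref{SetofPrimes}). For composite odd $k$ even the smallest nontrivial instances such as $f_9^{\{1,9,81,\dots\}}=Lie_9+p_9$ require a separate positivity verification, and nothing in your write-up (or in the paper) supplies a general mechanism. If you wish to pursue this, the most promising of your suggestions is the last one --- constructing $f_n^T$ as a character induced from a subgroup larger than $C_n$ --- but as it stands the proposal should be presented as a problem analysis, not as a proof.
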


\begin{conj} The product $\prod_{r\geq 0} (1-p_{k^r})^{-1}$ is Schur positive for any {odd} positive integer $k.$
\end{conj}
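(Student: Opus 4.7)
The second conjecture follows from the first via Corollary~\ref{cor6.5}: if $F^T=\sum_{n\ge 1}f_n^T$ is Schur positive then so is the symmetric power $H[F^T]=\prod_{n\in T}(1-p_n)^{-1}$. So the plan is to attack the first conjecture, namely Schur positivity of $f_n^T$ for $T=\{1,k,k^2,\ldots\}$ with odd $k\geq 3$.

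Two cases are already in hand. By the recurrence~\eqref{eqn6.10}, $f_n^T=Lie_n$ whenever $k\nmid n$, which is Schur positive classically. When $k=q$ is an odd prime, $T=P(\{q\})$ in the sense of Definition~\ref{SetofPrimes}, so $f_n^T=Lie_n^{(q)}$ is the Frobenius characteristic of a genuine induced representation from the cyclic subgroup $C_n\le S_n$, and is therefore Schur positive. This settles all odd primes $k$.

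For $k$ odd and composite, with $k\mid n$, the recurrence gives $f_n^T=Lie_n+f_{n/k}^T[p_k]$, and by induction on $n$ we may assume $f_{n/k}^T$ is Schur positive. Because plethysm with $p_k$ does not preserve Schur positivity, the contribution of $Lie_n$ must compensate for any negative Schur components of $f_{n/k}^T[p_k]$. My approach is to pass to the Foulkes decomposition: every symmetric function supported on cycle types of the form $d^{n/d}$ lies in the span of the Foulkes characteristics $\ell_n^{(r)}$, since the Ramanujan sums $\psi_r(d)$ of Theorem~\ref{Foulkes} give a basis of this subspace as $r$ runs over orbit representatives of $(\mathbb{Z}/n)^*$ acting on $\mathbb{Z}/n$. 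Hence $f_n^T=\sum_r a_r\,\ell_n^{(r)}$ for some rational coefficients $a_r$, and by Foulkes' Theorem~\ref{Foulkes}(2) the Schur coefficient of $s_\lambda$ in $f_n^T$ becomes a signed count of standard Young tableaux of shape $\lambda$ by major-index residue modulo $n$.

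The main obstacle is that the $a_r$ need not all be non-negative: already at $n=k=9$ one computes $\psi^T(1)=1,\ \psi^T(3)=-1,\ \psi^T(9)=9$, which gives the expansion $f_9^T=\ell_9^{(1)}-\ell_9^{(3)}+\ell_9^{(9)}$, so Schur positivity of $f_9^T$ reduces to the combinatorial assertion that for every $\lambda\vdash 9$ the number of SYT of shape $\lambda$ with $\mathrm{maj}\equiv 0$ or $1\pmod 9$ is at least the number with $\mathrm{maj}\equiv 3\pmod 9$. I would try to prove inequalities of this kind either by an explicit sign-reversing involution on SYT built from $k$-rim-hook insertions (for odd $k$ the $k$-sign arising in Littlewood's plethystic expansion of $s_\mu[p_k]$ is comparatively well-behaved), or by exploiting cyclic sieving for promotion on SYT to repackage the counts. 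This combinatorial step is where I expect the real difficulty to lie; the preceding reduction is essentially bookkeeping once one has Corollary~\ref{cor6.5}, the recurrence~\eqref{eqn6.10}, and Foulkes' theorem.
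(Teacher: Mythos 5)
The statement you are addressing is presented in the paper as a conjecture, with no proof offered; your proposal does not close that gap either, because it is a reduction to an unproven combinatorial assertion rather than a proof. To give credit where due: the reduction via Corollary~\ref{cor6.5} (Schur positivity of $F^T$ implies Schur positivity of $H[F^T]=\prod_{n\in T}(1-p_n)^{-1}$) is valid, and your two settled cases are genuinely settled --- $f_n^T=Lie_n$ when $k\nmid n$ by the recurrence~\eqref{eqn6.10}, and $f_n^T=Lie_n^{(k)}$ when $k$ is an odd prime (remark (3) following Corollary~\ref{cor6.5} together with Theorem~\ref{OnePrime}), so that case is an honest induced representation. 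Your sample computation $f_9^T=\ell_9^{(1)}-\ell_9^{(3)}+\ell_9^{(9)}=Lie_9+p_9$ also checks out. But for odd composite $k$ everything hinges on the step you defer: showing that a signed combination $\sum_r a_r\,\ell_n^{(r)}$ with some $a_r<0$ is Schur positive, i.e.\ the family of major-index inequalities on standard Young tableaux, or equivalently a sign-reversing involution you do not construct. No mechanism is given for why these inequalities should hold beyond the hope that the $k$-sign in $s_\mu[p_k]$ is manageable for odd $k$; that is exactly where the difficulty of the conjecture lives, and you say so yourself.

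Two further cautions. First, by attacking the stronger conjecture that $f_n^T$ itself is Schur positive, you take on more than the product statement requires: the paper notes explicitly (in the discussion of $T=\{1,k\}$ with $k$ even) that a Schur positive product $\prod_{n\in T}(1-p_n)^{-1}$ need not arise from Schur positive $f_n^T$, so a proof of the product conjecture might have to bypass $f_n^T$ altogether (for instance via the alternative symmetrisation $E[G^T]$ of Theorem~\ref{thm6.4}). Second, the induction ``assume $f_{n/k}^T$ is Schur positive'' is never actually used: since plethysm with $p_k$ destroys Schur positivity, the inductive hypothesis gives you nothing in the Foulkes reduction that follows it. As it stands the proposal is a sensible research plan with correct bookkeeping, but the conjecture remains open.
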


Fix $k\geq 2$ and consider the subset $T=\{1,k\}.$ It was shown in \cite[Theorem 4.23]{Su1} that the symmetric function 
$$W_{n,k}=\sum_{\mu\vdash n, \mu_i=1\, or \, k} p_\mu$$ is Schur positive.  Define $W_{0,k}=1.$ Then
$$\sum_{n\geq 0} W_{n,k}=\prod_{n\in T}(1-p_n)^{-1}=(1-p_1)^{-1} (1-p_k)^{-1}.$$
For $k=1$ we set $W_{n,1}=p_1^n$ for all $n\geq 0,$ so that the  preceding equation reduces, as expected, to
\begin{center} 
$\sum_{n\geq 0} W_{n,1}=\prod_{n\in T}(1-p_n)^{-1}=(1-p_1)^{-1}.$\end{center}

\begin{prop}\label{prop6.7} If $T=\{1,k\}$ and $k\geq 2,$ then 
\begin{equation}\label{eqn6.11}f_n^T
=\begin{cases}  Lie_n+Lie_{\frac{n}{k}}[p_k], & k|n;\\
                          Lie_n, &\mathrm{otherwise},\\
\end{cases}
\end{equation}
and hence $\sum_{n\geq 0} W_{n,k}=H[\sum_{n\geq 0} f_n^T].$

If $k$ is  prime, then 
$f_n^{\{1,k\}}=\mathrm{ch} (\exp\frac{2ki\pi}{n})\big\uparrow_{C_n}^{S_n}=\ell_n^{(k)},$ 
and hence the symmetric function defined by (\ref{eqn6.11}) is  
Schur positive. \end{prop}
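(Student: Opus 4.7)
The plan has two parts: first derive the piecewise formula~\eqref{eqn6.11} directly from the general Theorem~\ref{thm6.4}, and then for the prime case identify $f_n^{\{1,k\}}$ with the Foulkes character $\ell_n^{(k)}$, from which Schur positivity follows automatically.

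For the piecewise formula, I would apply equation~\eqref{thm6.4eqn2} of Theorem~\ref{thm6.4}, which states $f_n^T = \sum_{m\in T,\, m\mid n} Lie_{n/m}[p_m]$. With $T=\{1,k\}$, the allowable indices are $m=1$, contributing $Lie_n[p_1]=Lie_n$, and, when $k\mid n$, $m=k$, contributing $Lie_{n/k}[p_k]$; this gives~\eqref{eqn6.11}. The identity $\sum_{n\geq 0}W_{n,k} = H[\sum_{n\geq 0} f_n^T]$ is then immediate from~\eqref{thm6.4eqn1}.

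For the prime case, I would appeal to Proposition~\ref{metaf}: any symmetric function of the form~\eqref{definef_n} is determined by the numbers $\tilde f_n(1)$. Both $f_n^{\{1,k\}}$ (by Definition~\ref{def6.2}) and $\ell_n^{(k)}$ (by Theorem~\ref{Foulkes}(1)) are manifestly of this form, so it suffices to check that the two associated polynomials agree at $t=1$. Lemma~\ref{lem6.3} gives $\tilde f_n^{\{1,k\}}(1) = \delta(n\in\{1,k\})$. To evaluate $\tilde\ell_n^{(k)}(1)$, I would use the standard inner-product identity $\langle p_\mu,h_n\rangle=1$ for every $\mu\vdash n$, which converts $\tilde\ell_n^{(k)}(1)$ into $\langle\ell_n^{(k)},h_n\rangle$, the multiplicity of the trivial $S_n$-representation in the induced module. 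Frobenius reciprocity then reduces this to $\langle \exp(2\pi ik/n),1\rangle_{C_n}$, which equals $1$ if $n\mid k$ and $0$ otherwise. For $k$ prime, $n\mid k$ is equivalent to $n\in\{1,k\}$, so the two polynomials agree at $t=1$, and Proposition~\ref{metaf} forces $f_n^{\{1,k\}}=\ell_n^{(k)}$.

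With the identification in hand, Schur positivity is immediate, since $\ell_n^{(k)}$ is the Frobenius characteristic of a genuine induced $S_n$-representation; indeed Theorem~\ref{Foulkes}(2) gives an explicit tableau Schur expansion. The one slightly delicate step is the evaluation of $\tilde\ell_n^{(k)}(1)$; a direct alternative would be to compare the Ramanujan sum $\psi_k(d)=\phi(d)\mu(d/\gcd(d,k))/\phi(d/\gcd(d,k))$ with $\psi^T(d)=\mu(d)+k\mu(d/k)\delta(k\mid d)$ divisor-by-divisor, which for $k$ prime collapses cleanly into three cases according to the $k$-adic valuation of $d$ (using $\phi(k)=k-1$ and the vanishing of $\mu$ on nonsquarefree integers).
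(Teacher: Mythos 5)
Your proof is correct, and its first half is exactly the paper's: equation~\eqref{eqn6.11} is read off from~\eqref{thm6.4eqn2} of Theorem~\ref{thm6.4}, and the identity for $\sum_{n\ge 0}W_{n,k}$ from~\eqref{thm6.4eqn1}. For the prime case your route differs slightly in packaging: the paper cites Theorem~\ref{Su1Thm5.6} to write $\sum_{\lambda\vdash n:\lambda_i\in\{1,k\}}p_\lambda=H[\sum_{m\ge1}\ell_m^{(k)}]\vert_{\deg n}$, compares this with $H[F^{\{1,k\}}]$ from~\eqref{thm6.4eqn1}, and cancels using the plethystic invertibility of $H-1$; you never form the symmetric powers, instead observing that both sequences are of the shape~\eqref{definef_n} and invoking Proposition~\ref{metaf} after checking $\tilde\ell_n^{(k)}(1)=\delta(n\mid k)$ via $\langle p_\mu,h_n\rangle=1$ and Frobenius reciprocity. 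The two arguments are equivalent in substance --- the evaluation $\tilde\ell_n^{(k)}(1)$ you compute is precisely the content of the cited Theorem~\ref{Su1Thm5.6} --- but yours is more self-contained, re-deriving that input rather than quoting it, and your divisor-by-divisor comparison of $\psi_k(d)$ with $\psi^{\{1,k\}}(d)$ is a sound third option (it does check out in all three cases according to the $k$-adic valuation of $d$). One small point of care: Proposition~\ref{metaf} determines a sequence $(f_n)$ from the values $\tilde f_n(1)$ only when the whole sequence comes from a single function $\psi$ independent of $n$; this holds here because Foulkes' formula gives $\ell_n^{(k)}$ with $\psi_k(d)$ depending only on $d$ and the fixed $k$, which you implicitly use and is worth stating.
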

\begin{proof} Equation (\ref{eqn6.11}) is immediate from Theorem \ref{thm6.4}.  Now let $k$ be prime.   Recall Theorem \ref{Su1Thm5.6};  that equation  now becomes 
$$\sum_{\lambda\vdash n: \lambda_i=1, k} p_\lambda
=H[\sum_{m\geq 1} \text{ch}\, (\exp(\tfrac{2\pi i k}{n})\uparrow_{C_n}^{S_n})]\vert_{{\rm\, deg\,} n},$$ and thus the left-hand side 
is precisely $p^T$ for $k$ prime and $T=\{1,k\}.$ But $H-1$ is invertible with respect to plethysm, so  $H[F]=H[G]$ if and only if $F=G.$ Hence $f_n^T$ must coincide with $\ell_n^{(k)}$.
\end{proof}

Computations  indicate  that 
\begin{conj}  $f_n^{\{1,k\}}$ is Schur positive for $k=2$  and for all odd $k\geq 3.$  (This is trivially true if $k=1.$)
\end{conj}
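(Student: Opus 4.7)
The statement is a conjecture. By Proposition~\ref{prop6.7}, it already holds when $k$ is prime: both $k=2$ and every odd prime $k$ satisfy $f_n^{\{1,k\}}=\ell_n^{(k)}$, whose Schur positivity follows from the Kraskiewicz--Weyman interpretation in Theorem~\ref{Foulkes}(2). The open content is therefore $k$ odd composite, i.e. $k\in\{9,15,21,25,27,\ldots\}$. Moreover $f_n^{\{1,k\}}=Lie_n$ is trivially Schur positive when $k\nmid n$, so it suffices to treat multiples $n=mk$, where the recurrence \eqref{eqn6.11} gives $f_n^{\{1,k\}}=Lie_n+Lie_{n/k}[p_k]$. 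The obstruction is that the plethysm $Lie_{n/k}[p_k]$ is not itself Schur positive (the inner function $p_k$ has the alternating Schur expansion $p_k=\sum_{i=0}^{k-1}(-1)^i s_{(k-i,1^i)}$), so negative contributions must be absorbed by $Lie_n$.

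First I would write $f_n^{\{1,k\}}$ as an explicit integer combination of Foulkes characters. Applying Theorem~\ref{Su1Thm5.6} to $T'=D(k)$ (the set of all divisors of $k$) yields $f_n^{D(k)}=\ell_n^{(k)}$; by the linearity of $F^T$ in $T$ coming from \eqref{thm6.4eqn2},
\[ f_n^{\{1,k\}} \;=\; \ell_n^{(k)} \;-\; \sum_{m\mid k,\ 1<m<k,\ m\mid n} Lie_{n/m}[p_m]. \]
The leading term is Schur positive, and the goal is to show that the subtracted plethystic sum is dominated by $\ell_n^{(k)}$ in the Schur basis. For the illustrative case $n=k=9$ this yields $f_9^{\{1,9\}}=C\!onj_9-Lie_3[p_3]=Lie_9+C\!onj_9-\ell_9^{(3)}$, exhibiting the signed pattern expected in general.

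Second, I would use Theorem~\ref{Foulkes}(2) to translate this to a statement about standard Young tableaux: the multiplicity of $s_\lambda$ in $\ell_n^{(k)}$ equals $M_\lambda(k):=\#\{T\in\mathrm{SYT}(\lambda):\mathrm{maj}(T)\equiv k\pmod n\}$, and by iterating the displayed identity (applied recursively to each $Lie_{n/m}[p_m]=f_n^{\{1,m\}}-Lie_n$ for the nontrivial proper divisors $m$ of $k$), the subtracted terms reduce to further signed combinations of analogous major-index counts indexed by divisors of $\gcd(n,k)$. The problem then becomes a purely combinatorial signed counting inequality.

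The delicate step is proving this inequality. The parity restriction in the conjecture must enter essentially: Schur positivity genuinely fails at even composites; for instance, direct calculation gives $f_4^{\{1,4\}}=s_4+2s_{2,1,1}-s_{1,1,1,1}$, and the paper records analogous failures in the related family $T=\{1,4,16,\ldots\}$. A clean source of the parity distinction is the coefficient $(-1)^{k-1}$ of $s_{(1^k)}$ in $p_k$: for $k$ odd this is $+1$ and the obvious sign-representation obstruction vanishes, whereas for $k$ even it is $-1$. My best guess is that the combinatorial inequality should follow from a sign-reversing involution built from the Rhoades cyclic sieving triple $(\mathrm{SYT}(\lambda),\langle\mathrm{promotion}\rangle,f^\lambda(q))$, where the order-$n$ cyclic action interacts cleanly with divisors of $k$ precisely when $k$ is odd. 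The main obstacle I anticipate is making such an involution explicit and uniform in odd composite $k$; a more tractable alternative may be an induction on the number of prime factors of $k$, using the prime case as the base and the displayed recursive identity to step up.
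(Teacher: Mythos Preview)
The statement is recorded in the paper as an open conjecture; no proof is given, so there is nothing in the paper to compare your argument against. You correctly identify this at the outset and present not a proof but a sketch of a possible attack, with an honest acknowledgment of where it stalls.

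Your preliminary reductions are all correct and match what the paper establishes: Proposition~\ref{prop6.7} settles prime $k$; equation~\eqref{eqn6.11} gives $f_n^{\{1,k\}}=Lie_n$ when $k\nmid n$; and your identity
\[
f_n^{\{1,k\}}=\ell_n^{(k)}-\sum_{\substack{m\mid k,\ 1<m<k\\ m\mid n}}Lie_{n/m}[p_m]
\]
follows from Corollary~\ref{cor6.12} together with the additivity of $f_n^T$ in $T$ visible in~\eqref{thm6.4eqn2}. Your computation $f_4^{\{1,4\}}=s_4+2s_{2,1,1}-s_{1,1,1,1}$ is also correct and is exactly the kind of failure the paper alludes to in the sentence following the conjecture.

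The speculative portion---recasting the problem as a signed major-index inequality and hoping to resolve it via a promotion/cyclic-sieving involution---is a reasonable heuristic, but as you yourself note, the decisive step is entirely open. The observation that the coefficient of $s_{(1^k)}$ in $p_k$ changes sign with the parity of $k$ explains only the most visible obstruction, not the full Schur positivity. So your proposal is an accurate summary of the known cases and a plausible but unexecuted plan; the paper itself offers nothing further.
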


When $k$ is even and not equal to 2, this fails.  For instance, if $n=k=4m,$ it is easy to see that $Lie_{4m}+p_{4m}$ contains the sign representation with coefficient $(-1).$ However 
we have $H[F^{\{1,k\}}]=(1-p_1)^{-1}(1-p_k)^{-1},$ which we know to be Schur positive from \cite[Proposition 4.23]{Su1}.  This example shows that it is not always possible to write a Schur positive sum of power sums as a symmetrised module over a sequence of true $S_n$-modules, since $Lie_k+p_k$ fails to be Schur positive when $k$ is even.

\begin{prop}\label{prop6.8} Let $k\geq 2.$ Then $\omega (E[F^{\{1,k\}}])=(1-p_1)^{-1}(1-(-1)^{k-1}p_k)^{-1} (1+p_2)(1+p_{2k}).$  If $k$ is prime, this is Schur positive.
\end{prop}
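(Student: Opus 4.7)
The plan is to apply equation~\eqref{metaExt} of Theorem~\ref{metathm} with $F=F^T$, specialised to $v=1$, which gives
$$E[F^T]=\prod_{m\geq 1}(1-p_m)^{\tilde f_m^T(-1)},$$
and then transform by $\omega$ termwise, using $\omega(p_m)=(-1)^{m-1}p_m$. So the entire computation reduces to evaluating $\tilde f_m^T(-1)$ for $T=\{1,k\}$.

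For this evaluation, Lemma~\ref{lem6.3} gives $\tilde f_m^T(1)=\delta(m\in T)$, and Proposition~\ref{metaf} then determines
$$\tilde f_m^T(-1)=\begin{cases}-\delta(m\in T),& m\text{ odd,}\\ \delta(m/2\in T)-\delta(m\in T),& m\text{ even.}\end{cases}$$
With $T=\{1,k\}$ the support is confined to $\{1,2,k,2k\}$, and a direct case split (by $k$ odd, $k=2$, or $k$ even $\geq 4$) yields in each instance
$$E[F^T]=(1-p_1)^{-1}(1-p_2)^{\tilde f_2^T(-1)}(1-p_k)^{\tilde f_k^T(-1)}(1-p_{2k}).$$
Applying $\omega$ sends $(1-p_2)\mapsto(1+p_2)$, $(1-p_{2k})\mapsto(1+p_{2k})$, and $(1-p_k)\mapsto(1-(-1)^{k-1}p_k)$, producing the stated product $(1-p_1)^{-1}(1-(-1)^{k-1}p_k)^{-1}(1+p_2)(1+p_{2k})$ after collecting exponents. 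The boundary $k=2$ case needs a small check: there both $\tilde f_2^T(-1)=0$ and $\tilde f_k^T(-1)=0$, so the formula collapses by cancellation of $(1+p_2)$ against $(1+p_2)^{-1}$ to the correct expression $(1-p_1)^{-1}(1+p_4)$.

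For the Schur positivity claim when $k$ is prime, Proposition~\ref{prop6.7} identifies $f_n^{\{1,k\}}$ with the Foulkes characteristic $\ell_n^{(k)}$, which is Schur positive. Hence $F^T$ is the Frobenius characteristic of a genuine graded $S_\bullet$-module, every exterior power $e_r[F^T]$ is likewise a true characteristic, so $E[F^T]$ is Schur positive; applying the Schur-basis-permuting involution $\omega$ preserves this. The main obstacle is purely bookkeeping: the unified formula hides a three-way case analysis, and one must verify that the cancellations at the degenerate $k=2$ value exactly reproduce the special answer $(1-p_1)^{-1}(1+p_4)$. No deeper plethystic identity is required beyond Theorem~\ref{metathm}, the parity formulas of Proposition~\ref{metaf}, and the identification in Proposition~\ref{prop6.7}.
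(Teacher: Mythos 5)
Your proof is correct, and it reaches the stated formula by a genuinely different computational route than the paper. The paper's (very terse) proof goes through the plethystic identity $E=H[p_1-p_2]$ of Proposition~\ref{Pleth1}(3)--(4) together with the decomposition $F^{\{1,k\}}=(p_1+p_k)[Lie]$ from Theorem~\ref{thm6.4}: one gets
\begin{equation*}
E[F^{\{1,k\}}]=H\bigl[F^{\{1,k\}}-F^{\{1,k\}}[p_2]\bigr]=H\bigl[(p_1+p_k-p_2-p_{2k})[Lie]\bigr]=(1-p_1)^{-1}(1-p_k)^{-1}(1-p_2)(1-p_{2k}),
\end{equation*}
and then applies $\omega$. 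You instead feed the values $\tilde f_m^{T}(1)=\delta(m\in T)$ from Lemma~\ref{lem6.3} through the parity relations of Proposition~\ref{metaf} into equation~\eqref{metaExt} of Theorem~\ref{metathm}; this is exactly the machinery the paper uses in Section~3 for the $Lie_n^S$ computations, and by Proposition~\ref{metaequiv} the two routes are formally equivalent. Your case analysis of $\tilde f_m^{T}(-1)$ (support in $\{1,2,k,2k\}$, with the degenerate cancellation at $k=2$ collapsing to $(1-p_1)^{-1}(1+p_4)$) checks out, and your Schur positivity argument --- $f_n^{\{1,k\}}=\ell_n^{(k)}$ for $k$ prime by Proposition~\ref{prop6.7}, hence $E[F^{\{1,k\}}]$ is a characteristic of a true module and $\omega$ preserves Schur positivity --- is precisely the paper's second sentence. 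What your approach buys is uniformity and explicitness (it makes the exponents visible and forces the $k=2$ check); what the paper's buys is brevity, since the identity $E[G]=H[G-G[p_2]]$ together with $F^{T}=p^{T}[Lie]$ gives the product in one line without evaluating any $\tilde f_m(-1)$.
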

\begin{proof}  The plethysm $E[F^{\{1,k\}}]$ is most easily calculated by using Part (3) of Proposition \ref{Pleth1} in the next section.  Since the series $F^{\{1,k\}}$ is Schur positive when $k$ is prime, the claim follows.\end{proof}

The three propositions that follow are also clear from Theorem \ref{thm6.4}.

\begin{prop}\label{prop6.9}  Let $k\geq 2$ and $T=\{n:n\leq k\}.$ Then 
$\prod_{n=1}^k (1-p_n)^{-1}=H[\sum_{n} f_n^T],$
where
\begin{equation}\label{eqn6.12} f_n^T=\sum_{\stackrel{m=1}{m|n}}^k Lie_{\frac{n}{m}}[p_m].\end{equation}
\end{prop}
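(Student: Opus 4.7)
The plan is that this proposition is essentially a direct specialisation of Theorem \ref{thm6.4} to the particular set $T=\{1,2,\ldots,k\}$, so no new ideas are required — the work was already done by the general machinery of Section~\ref{SecfnT}.

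First, I would apply equation~\eqref{thm6.4eqn1} of Theorem \ref{thm6.4} to this choice of $T$: since $T=\{1,2,\ldots,k\}$, the product $\prod_{n\in T}(1-p_n)^{-1}$ is precisely $\prod_{n=1}^k(1-p_n)^{-1}$, and the left-hand side $H[F^T]=H[\sum_{n\geq 1} f_n^T]$ is exactly the expression appearing in the statement. This immediately yields the displayed identity for the generating function.

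Next, I would invoke the second half of Theorem~\ref{thm6.4}, namely equation~\eqref{thm6.4eqn2}, which expresses $f_n^T$ as $\sum_{m\in T,\, m\mid n} Lie_{n/m}[p_m]$. Substituting $T=\{1,\ldots,k\}$, the condition $m\in T$ becomes simply $1\leq m\leq k$, giving
\[
f_n^T=\sum_{\substack{1\leq m\leq k\\ m\mid n}} Lie_{n/m}[p_m],
\]
which is the formula~\eqref{eqn6.12}.

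There is no real obstacle here: both pieces are immediate corollaries of Theorem~\ref{thm6.4}, and the only thing worth checking is the bookkeeping — that the divisibility condition $m\mid n$ from~\eqref{thm6.4eqn2} combined with $m\in T=\{1,\ldots,k\}$ indeed agrees with the summation convention $\stackrel{m=1}{m\mid n}$ up to $k$ used in~\eqref{eqn6.12}. Since Theorem~\ref{thm6.4} itself was proved via Proposition~\ref{Su1Prop3.1} and Lemma~\ref{lem6.3} (which shows $\tilde f_n^T(1)=\delta(n\in T)$, so Theorem~\ref{metathm} applies), no further plethystic calculation is needed at this stage.
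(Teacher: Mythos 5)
Your proposal is correct and is exactly the paper's argument: the paper states that this proposition is "clear from Theorem \ref{thm6.4}," and you have simply written out the specialisation of \eqref{thm6.4eqn1} and \eqref{thm6.4eqn2} to $T=\{1,\ldots,k\}$, which is all that is needed.
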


\begin{cor}\label{cor6.10}   Let $T=\{n:n\leq k\}, k\geq 2.$  If $n$ is prime, or $n\le k,$ or $n>k$ and $n$ is such that the greatest proper divisor of $n$ is at most $k,$ then $f_n^T$ is Schur positive.
\end{cor}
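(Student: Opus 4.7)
The plan is to use the explicit formula
$f_n^T = \sum_{m \mid n,\, m \leq k} Lie_{n/m}[p_m]$
from Proposition \ref{prop6.9} and to analyze which divisors of $n$ appear in the sum under each hypothesis. The three cases in the statement overlap, but they reduce to two situations: $n \leq k$, and $n > k$ with greatest proper divisor at most $k$. The case of prime $n$ is subsumed by these, since a prime $n \leq k$ falls into the first and a prime $n > k$ has greatest proper divisor $1 \leq k$, so it falls into the second.

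When $n \leq k$, every divisor of $n$ satisfies $m \leq k$, and extracting the degree-$n$ part of \eqref{prop6.6eqn1} yields $f_n^T = \sum_{m \mid n} Lie_{n/m}[p_m] = C\!onj_n$, which is Schur positive as a genuine permutation representation. When $n > k$ and the greatest proper divisor $n/p$ (with $p$ the smallest prime factor of $n$) is at most $k$, every proper divisor of $n$ belongs to the sum while $m = n$ does not, giving
\begin{equation*}
f_n^T \;=\; \sum_{m \mid n,\, m < n} Lie_{n/m}[p_m] \;=\; C\!onj_n - Lie_1[p_n] \;=\; C\!onj_n - p_n.
\end{equation*}
Hence the crux is to establish that $C\!onj_n - p_n$ is Schur positive for every $n \geq 2$.

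For this I would combine two classical facts. First, by Theorem \ref{Foulkes}(2), the multiplicity of $s_\lambda$ in $C\!onj_n = \ell_n^{(n)}$ equals the number of standard Young tableaux of shape $\lambda$ whose major index is divisible by $n$. Second, Murnaghan--Nakayama gives $p_n = \sum_{r=0}^{n-1} (-1)^r s_{(n-r,1^r)}$, supported on hook shapes. Hence for non-hook $\lambda$ the coefficient of $s_\lambda$ in $C\!onj_n - p_n$ is nonnegative automatically, and for hooks $\lambda = (n-r,1^r)$ with $r$ odd the required bound is merely $\geq -1$. It remains to produce, for each even $r \in \{0,2,\ldots,n-1\}$, at least one SYT of shape $(n-r,1^r)$ with major index divisible by $n$.

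This last step is the main technical obstacle, but is settled by an explicit construction. A SYT of hook shape $(n-r,1^r)$ is determined by its leg entries $\{i_1 < \cdots < i_r\} \subseteq \{2,\ldots,n\}$, and a short descent-set calculation shows $\mathrm{maj}(T) = \bigl(\sum_m i_m\bigr) - r$. I would take as leg entries the union of the pairs $\{a+1,\,n+1-a\}$ for $a = 1,\ldots,r/2$: each pair sums to $n+2 \equiv 2 \pmod n$, so the total sum is $\equiv r \pmod n$, as required. These pairs are disjoint and contained in $\{2,\ldots,n\}$ whenever $r < n$, which is guaranteed by $r \leq n-1$. This yields the needed SYT, completing the Schur positivity of $C\!onj_n - p_n$, and hence of $f_n^T$ in every case covered by the corollary.
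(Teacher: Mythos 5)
Your proposal is correct, and it follows the paper's reduction exactly up to the final step: like the paper, you use Proposition \ref{prop6.9} together with $\sum_m p_m[Lie]=\sum_n C\!onj_n$ to get $f_n^T=C\!onj_n$ when $n\le k$ and $f_n^T=C\!onj_n-p_n$ when $n>k$ with greatest proper divisor at most $k$ (your observation that the prime case is subsumed is a harmless streamlining; the paper instead treats it separately via $Lie_n+p_n=\frac1n(p_1^n+\phi(n)p_n)$). Where you genuinely diverge is in proving Schur positivity of $C\!onj_n-p_n$. The paper quotes Swanson's classification of the hooks missing from $C\!onj_n$ (namely $(n-1,1)$, $(2,1^{n-2})$ for odd $n$, and $(1^n)$ for even $n$) and checks these are exactly hooks where $p_n$ contributes $-1$. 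You instead argue directly from Theorem \ref{Foulkes}(2): for each hook $(n-r,1^r)$ with $r$ even you exhibit a standard Young tableau with major index divisible by $n$, using $\mathrm{maj}(T)=\bigl(\sum_m i_m\bigr)-r$ for leg set $\{i_1<\cdots<i_r\}$ and the pairs $\{a+1,\,n+1-a\}$, $a=1,\dots,r/2$, which are disjoint and lie in $\{2,\dots,n\}$ precisely because $r\le n-1$. Your route is self-contained modulo results already stated in the paper, and it even reproves the "only if" direction of what is needed from Swanson's theorem (that all even-leg hooks do occur in $C\!onj_n$); the paper's route is shorter but leans on the external reference. Both are valid.
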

\begin{proof} If $n$ is prime it is easy to see that 
$f_n^T=\begin{cases} Lie_n, & n>k\\
                                 Lie_n+p_n, & \mathrm{otherwise.}
\end{cases}$
Since in this case, $\mu(n)=-1$ and $\phi(n)=n-1,$ and thus $Lie_n=\frac{1}{n}(p_1^n-p_n),$ we conclude that 
$Lie_n+p_n=\frac{1}{n}(p_1^n +\phi(n)p_n) =Lie_n^{\mathcal{P}}.$

If $n\le k$ then the sum in equation (\ref{eqn6.12}) ranges over all divisors of $n$ and hence by (\ref{prop6.6eqn1}) we have $f_n^T=C\!onj_n.$ 

If $n>k$ and  the largest proper divisor of $n$ is at most $k,$ we have $f_n^T=C\!onj_n-p_n,$ again by (\ref{prop6.6eqn1}).  But it is well known that 
$p_n=\sum_{r\geq 0} (-1)^r s_{(n-r, 1^r)}.$  Also by a  result of 
\cite{Sw}, $C\!onj_n$ contains all hooks except for the following: $(n-1,1)$ for all $n\geq 2,$ 
$(2, 1^{n-2})$ for odd $n\geq 3$, and $(1^n)$ for even $n.$ In all three cases, the corresponding Schur function appears with coefficient $-1$ in $p_n,$ hence with coefficient  
$+1$ in $C\!onj_n-p_n.$ This finishes the proof. \end{proof}

\begin{conj} (See also \cite[Conjecture 1]{Su1}.) $f_n^{\{1,\ldots,k\}}$ is Schur positive for all $n$ and $k,$ and hence so is $\prod_{n=1}^k (1-p_n)^{-1}.$
\end{conj}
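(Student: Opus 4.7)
\medskip

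\noindent\emph{Plan for the conjecture.} The conjecture is open; what follows is a plan rather than a proof.

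The natural first step is to invoke Corollary~\ref{cor6.10}, which already secures Schur positivity of $f_n^{\{1,\ldots,k\}}$ when $n\le k$, when $n$ is prime, or when the largest proper divisor of $n$ is at most $k$. We may therefore assume $n$ is composite and admits a proper divisor strictly greater than $k$, and attack the plethystic decomposition \eqref{eqn6.12},
\[
  f_n^{\{1,\ldots,k\}} \;=\; \sum_{\substack{m\mid n\\ m\le k}} Lie_{n/m}[p_m],
\]
whose non-$Lie_n$ summands are not individually Schur positive.

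The most direct attempt is to mimic the treatment of $T=P(S)$ in Section~\ref{SecPrimes} and produce non-negative integers $a_r$ with $f_n^{\{1,\ldots,k\}} = \sum_{r=1}^n a_r\,\ell_n^{(r)}$; Theorem~\ref{Foulkes} would then simultaneously yield Schur positivity and a combinatorial (major-index) interpretation of each Schur multiplicity. By Foulkes' formula this amounts to solving the Ramanujan-sum system $\sum_{r} a_r\,\psi_r(d) = \psi^T(d)$ for every $d\mid n$. For $T=\{1,\ldots,k\}$ the system, restricted to $r\mid n$, has a unique integer solution, but small computations---already $n=12$, $k=3$, where one is forced to $a_1=-1$, $a_2=a_3=1$---show that this solution can have negative entries. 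Thus $f_n^{\{1,\ldots,k\}}$ is in general only a \emph{virtual}, not a genuine, sum of Foulkes characters, and the uniform cyclic-induction route is blocked.

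The main obstacle is therefore to establish positivity of
$\langle f_n^{\{1,\ldots,k\}}, s_\lambda\rangle = \tfrac{1}{n}\sum_{d\mid n}\psi^T(d)\,\chi^\lambda\bigl((d^{n/d})\bigr)$
without such a single induced realisation. Two complementary strategies seem worth pursuing. First, induction on $k$: the increment $f_n^{\{1,\ldots,k\}} - f_n^{\{1,\ldots,k-1\}} = [k\mid n]\,Lie_{n/k}[p_k]$ is manifestly not Schur positive, yet could conceivably be ``absorbed'' by the pre-existing terms, in the spirit of the recurrence \eqref{eqn6.10}. Second, one could try to construct $f_n^{\{1,\ldots,k\}}$ as the character of an explicit $S_n$-submodule of $\bigoplus_{r} \ell_n^{(r)}$ cut out by major-index conditions attached to $T$, generalising the $Lie_n^S$ construction of Section~\ref{SecPrimes}. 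In either case the crux will be a combinatorial identity on major-index counts (via Theorem~\ref{Foulkes}(2)) that enforces the required cancellations, and I expect this to be the hardest step.
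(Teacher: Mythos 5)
This statement is a conjecture; the paper offers no proof of it (it only records the partial results of Proposition~\ref{prop6.9} and Corollary~\ref{cor6.10} and refers to \cite[Conjecture 1]{Su1}), and you are right to present a plan rather than claim a proof. Your plan is consistent with everything the paper establishes: Corollary~\ref{cor6.10} does cover exactly the cases you list ($n\le k$, $n$ prime, or greatest proper divisor of $n$ at most $k$), and the decomposition you quote is equation~\eqref{eqn6.12}. Your obstruction is also correct and worth recording: I checked the $n=12$, $k=3$ system $\sum_{r\mid 12} a_r\,c_d(r)=\psi^{\{1,2,3\}}(d)$ for all $d\mid 12$, and its unique solution is indeed $a_1=-1$, $a_2=a_3=1$, so $f_{12}^{\{1,2,3\}}=-\ell_{12}^{(1)}+\ell_{12}^{(2)}+\ell_{12}^{(3)}$ is only a virtual combination of Foulkes characters; this genuinely rules out the route that works for $T=P(S)$ in Section~\ref{SecPrimes}, where $f_n^T$ equals a single $\ell_n^{(r)}$. (It is the same phenomenon the paper observes after Proposition~\ref{prop6.7}: a Schur positive sum of power sums need not be a symmetrised module over genuine $S_n$-modules.) Neither of your two proposed strategies is carried out, so the conjecture remains open, as it does in the paper.
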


\begin{prop}\label{prop6.11} Let $k\geq 2$ and $T=\{n: n|k\}.$ Then 
\begin{equation}\label{eqn6.13} \prod_{n|k} (1-p_n)^{-1} = H[\sum_n f_n^T],
\quad \text{where}\quad f_n^T=\sum_{m|(k,n)} Lie_{\frac{n}{m}}[p_m]
\end{equation}
\end{prop}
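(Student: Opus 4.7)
The plan is to obtain both assertions as direct specialisations of Theorem \ref{thm6.4}, with the subset $T := \{n : n \mid k\}$. Since this set is a nonempty subset of the positive integers (containing at least $1$ and $k$), the hypotheses of Theorem \ref{thm6.4} are met, and the proposition will follow once the divisibility conditions in the two parts of equation (\ref{eqn6.13}) are put into the right form.

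First, for the generating function identity, I apply equation (\ref{thm6.4eqn1}) of Theorem \ref{thm6.4} to this $T$, which immediately yields
$$H[F^T] = \prod_{n \in T}(1-p_n)^{-1} = \prod_{n \mid k}(1-p_n)^{-1}.$$
This is the first assertion of (\ref{eqn6.13}).

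Second, for the explicit expression for each $f_n^T$, I invoke equation (\ref{thm6.4eqn2}) of the same theorem, which states
$$f_n^T = \sum_{\substack{m \in T \\ m \mid n}} Lie_{n/m}[p_m].$$
With $T = \{m : m \mid k\}$, the summation condition reads ``$m \mid k$ and $m \mid n$'', which is equivalent to $m \mid (k,n)$ by the universal property of the greatest common divisor. Substituting gives exactly the formula claimed for $f_n^T$ in (\ref{eqn6.13}).

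There is no genuine obstacle here: the proposition is merely a specialisation of Theorem \ref{thm6.4} to a convenient choice of $T$, the only elementary fact needed being the characterisation of common divisors of $k$ and $n$ via their gcd $(k,n)$. The interest of the statement lies not in its proof but in that it packages the family of products $\prod_{n \mid k}(1-p_n)^{-1}$ as symmetrised modules over the sequence $(f_n^T)_{n \geq 1}$, paralleling the analyses carried out earlier in the section for other natural choices of $T$ and setting up the corresponding Schur positivity questions via Corollary \ref{cor6.5}.
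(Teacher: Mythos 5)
Your proposal is correct and matches the paper's approach exactly: the paper simply notes that this proposition (along with Propositions \ref{prop6.9} and \ref{prop6.14}) is "clear from Theorem \ref{thm6.4}," and your write-up supplies precisely the intended specialisation, including the observation that the conditions $m\mid k$ and $m\mid n$ together amount to $m\mid (k,n)$.
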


 In particular from Theorem \ref{Su1Thm5.6} we immediately obtain 
 (since $H-1$ is invertible with respect to plethysm) two corollaries:
\begin{cor}\label{cor6.12} $$\ell_n^{(k)} ={\rm ch } \exp(2i\pi\cdot k/n)\uparrow_{C_n}^{S_n}= \sum_{m|(k,n)} Lie_{\frac{n}{m}}[p_m],$$
and hence $f_n^T$ is Schur positive when $T$ is the set of all divisors of $k.$
\end{cor}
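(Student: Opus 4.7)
The plan is to combine Theorem \ref{Su1Thm5.6} with Proposition \ref{prop6.11} and invoke the injectivity of the plethystic operator $H[\,\cdot\,]$ on the maximal ideal of the completed symmetric function ring.

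First, I would rewrite Theorem \ref{Su1Thm5.6} in ungraded form. Summing the identity
$\sum_{\lambda\vdash n:\lambda_i\mid k} p_\lambda=H[\sum_{m\geq 1}\ell_m^{(k)}]\vert_{\deg n}$
over all $n\geq 0$ and using the standard power-sum product expansion gives
\[
\prod_{n\mid k}(1-p_n)^{-1}=\sum_{\lambda:\lambda_i\mid k} p_\lambda=H\!\left[\sum_{m\geq 1}\ell_m^{(k)}\right].
\]
On the other hand, Proposition \ref{prop6.11} applied to the set $T=\{n:n\mid k\}$ yields
\[
\prod_{n\mid k}(1-p_n)^{-1}=H\!\left[\sum_{n\geq 1} f_n^T\right], \qquad f_n^T=\sum_{m\mid(k,n)} Lie_{n/m}[p_m].
\]

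Next I would equate the two plethystic expressions and invert. Since $H-1=\sum_{i\geq 1} h_i$ has $h_1=p_1$ as its degree-one term, it admits a plethystic inverse on the ideal of positive-degree symmetric functions; equivalently, the operator $F\mapsto H[F]$ is injective on series with no constant term. From $H[\sum_m \ell_m^{(k)}]=H[\sum_n f_n^T]$ we therefore deduce
\[
\sum_{m\geq 1}\ell_m^{(k)}=\sum_{n\geq 1} f_n^T.
\]
Both series are a sum of homogeneous components indexed by degree, so matching degree $n$ on either side gives the desired identity
\[
\ell_n^{(k)}=f_n^T=\sum_{m\mid(k,n)} Lie_{n/m}[p_m].
\]

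Finally, Schur positivity of $f_n^T$ is immediate: $\ell_n^{(k)}$ is the Frobenius characteristic of a genuine induced representation $\exp(2\pi ik/n)\!\uparrow_{C_n}^{S_n}$, hence a bona fide character, hence Schur positive. The main (and only real) obstacle is justifying the plethystic cancellation step; once one has the invertibility of $H-1$ under plethysm, the rest is a matching of degrees. All other ingredients are already assembled in Theorem \ref{Su1Thm5.6}, Proposition \ref{prop6.11}, and the remarks that precede the corollary.
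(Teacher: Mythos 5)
Your proposal is correct and is precisely the argument the paper intends: it derives the identity by equating the two expressions for $\prod_{n\mid k}(1-p_n)^{-1}$ from Theorem \ref{Su1Thm5.6} and Proposition \ref{prop6.11} and cancelling via the plethystic invertibility of $H-1$, exactly as the paper's introductory sentence to the corollary indicates. No gaps; the Schur positivity conclusion via the genuine induced character $\ell_n^{(k)}$ is also as intended.
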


\begin{cor}\label{cor6.13} We have the following decomposition of the regular representation into virtual representations:
\begin{equation}\label{eqn6.15} p_1^n=\sum_{k=1}^n \sum_{m|(k,n)} Lie_{\frac{n}{m}}[p_m]=\sum_{d|n}d\, Lie_d[p_{\frac{n}{d}}].
\end{equation}
\end{cor}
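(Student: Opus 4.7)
The plan is to derive the identity as a direct combinatorial consequence of Corollary \ref{cor6.12}, the key observation being that the sum $\sum_{k=1}^n \ell_n^{(k)}$ is the Frobenius characteristic of the regular representation of $S_n$ and therefore equals $p_1^n$.

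First, I would argue as follows. The one-dimensional characters $\exp(2\pi i k/n)$ for $k = 1, \ldots, n$ exhaust the irreducibles of the cyclic group $C_n$, so their sum is the character of the regular representation of $C_n$. Induction in stages gives
\[\sum_{k=1}^n \exp\!\bigl(\tfrac{2\pi i k}{n}\bigr)\uparrow_{C_n}^{S_n} \;=\; \mathbf{1}\uparrow_{\{e\}}^{C_n}\uparrow_{C_n}^{S_n} \;=\; \mathbf{1}\uparrow_{\{e\}}^{S_n},\]
which is the regular representation of $S_n$, whose Frobenius characteristic is $p_1^n$. Hence $\sum_{k=1}^n \ell_n^{(k)} = p_1^n$.

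Substituting the formula $\ell_n^{(k)} = \sum_{m\mid(k,n)} Lie_{n/m}[p_m]$ from Corollary \ref{cor6.12} yields the first equality
\[p_1^n \;=\; \sum_{k=1}^n \ell_n^{(k)} \;=\; \sum_{k=1}^n \sum_{m\mid(k,n)} Lie_{\tfrac{n}{m}}[p_m].\]
For the second equality, I would simply exchange the order of summation. For a fixed divisor $m$ of $n$, the condition $m\mid(k,n)$ with $1 \le k \le n$ reduces (since $m\mid n$) to $m\mid k$, which holds for exactly $n/m$ values of $k$. Therefore
\[\sum_{k=1}^n \sum_{m\mid(k,n)} Lie_{\tfrac{n}{m}}[p_m] \;=\; \sum_{m\mid n} \tfrac{n}{m}\, Lie_{\tfrac{n}{m}}[p_m] \;=\; \sum_{d\mid n} d\, Lie_d[p_{\tfrac{n}{d}}],\]
where the last step is the substitution $d = n/m$, completing the proof.

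There is essentially no hard step here: once Corollary \ref{cor6.12} is in hand, the only real content is the recognition that summing the Foulkes characteristics $\ell_n^{(k)}$ over all $k$ amounts to inducing the regular representation of $C_n$ up to $S_n$, which is the regular representation of $S_n$. The reindexing argument that gives the second equality is purely formal.
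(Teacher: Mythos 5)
Your proof is correct and follows essentially the same route as the paper: the author also starts from $p_1^n=\sum_{k=1}^n \ell_n^{(k)}$ (citing Reutenauer's Theorem 8.8 rather than re-deriving it via induction in stages, as you do), substitutes Corollary \ref{cor6.12}, and interchanges the order of summation exactly as in your reindexing step.
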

\begin{proof} This follows from the decomposition (see   \cite[Theorem 8.8]{R}) 
$$ p_1^n=\sum_{k=1}^n \ell_n^{(k)}$$
and the preceding corollary, because the first sum can be rewritten as $$\sum_{m|n} \sum_{\stackrel{r=1}{k=rm\leq n}}^{ \frac{m}{n}} Lie_{\frac{m}{n}}[p_m]=\sum_{m|n}\frac{m}{n}\ Lie_{\frac{m}{n}}[p_m].$$
One can also derive equation (\ref{eqn6.15}) directly by using the expansion of $Lie_n$ into power sums.
\end{proof}
\begin{prop}\label{prop6.14} Let $T=\{n:n\equiv 1\,\mathrm{mod}\, k\}.$
Then 
$$\prod_{n\equiv 1\,\mathrm{mod}\, k} (1-p_n)^{-1}=H[\sum_{n} f_n^T]\quad \text{for} \quad
f_n^T=\sum_{\stackrel{m\equiv 1\,\mathrm{mod}\, k}{ m|n}} Lie_{\frac{n}{m}}[p_m].$$
\end{prop}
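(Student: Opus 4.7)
The plan is to observe that Proposition \ref{prop6.14} is a direct specialization of Theorem \ref{thm6.4} to the particular set $T = \{n : n \equiv 1 \bmod k\}$, so no new machinery is required; one just needs to verify the hypotheses and substitute.

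First I would note that $T = \{n : n \equiv 1 \bmod k\}$ is a nonempty subset of the positive integers (it contains $1$), so Definitions \ref{def6.1} and \ref{def6.2} produce a well-defined function $\psi^T$, symmetric functions $f_n^T$, and series $F^T$. Then the first equation of Proposition \ref{prop6.14}, namely
\begin{equation*}
\prod_{n \equiv 1 \bmod k} (1 - p_n)^{-1} = H\bigl[\textstyle\sum_n f_n^T\bigr],
\end{equation*}
is immediate from equation (\ref{thm6.4eqn1}) of Theorem \ref{thm6.4} once the product on the left is rewritten as $\prod_{n \in T}(1-p_n)^{-1}$.

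Next, I would invoke the second conclusion of Theorem \ref{thm6.4}, equation (\ref{thm6.4eqn2}), which yields
\begin{equation*}
f_n^T = \sum_{\substack{m \in T \\ m \mid n}} Lie_{n/m}[p_m] = \sum_{\substack{m \equiv 1 \bmod k \\ m \mid n}} Lie_{n/m}[p_m],
\end{equation*}
which is exactly the formula claimed. This completes the argument. There is no substantive obstacle here: the entire content is in Theorem \ref{thm6.4}, and Proposition \ref{prop6.14} is simply the instance of that theorem corresponding to the arithmetic progression $1 \bmod k$. (In particular, one does not need to unpack Definition \ref{def6.1}: both conclusions are stated for arbitrary nonempty $T$, and one simply chooses this $T$.) If desired, one could alternatively give a self-contained derivation starting from Proposition \ref{Su1Prop3.1} applied to the sequence $f_n^T$, but this would merely reproduce the proof of Theorem \ref{thm6.4} in the special case at hand, so the economical route is to cite Theorem \ref{thm6.4} directly.
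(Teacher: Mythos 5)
Your proposal is correct and matches the paper exactly: the paper offers no separate argument for Proposition \ref{prop6.14}, merely noting that it (along with Propositions \ref{prop6.9} and \ref{prop6.11}) is immediate from Theorem \ref{thm6.4}, which is precisely the specialization you carry out. Nothing further is needed.
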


After seeing a preprint  of \cite{Su1}, Richard Stanley made the following conjecture, and verified it for $n\leq 24$ and $k\leq 6$.
\begin{conj}\label{RPS2015} (R. Stanley, 2015)  $\prod_{n\equiv 1\,\mathrm{mod}\, k} (1-p_n)^{-1}$ is Schur positive for all $k.$ 
\end{conj}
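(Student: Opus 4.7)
The plan is to prove Schur positivity of the product $H[F^T]$, where $T=\{n:n\equiv 1\,\mathrm{mod}\, k\}$. By Proposition~\ref{prop6.14} and Corollary~\ref{cor6.5}, a sufficient condition is Schur positivity of the series $F^T=\sum_{n\geq 1} f_n^T$, with
\[
f_n^T=\sum_{\substack{m\mid n\\ m\equiv 1\,\mathrm{mod}\, k}} Lie_{n/m}[p_m].
\]
However, the phenomenon discussed after Proposition~\ref{prop6.8} (where $F^{\{1,k\}}$ fails Schur positivity for even $k\ne 2$, while $H[F^{\{1,k\}}]$ does not) cautions that Schur positivity of $F^T$ may be strictly stronger than what is needed, so the plan must accommodate working directly with the symmetric power $H[F^T]$.

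First I would attempt a reduction to Foulkes characters. The summand $Lie_{n/m}[p_m]$ in $f_n^T$ is precisely the type of term appearing in the decomposition $\ell_n^{(k)}=\sum_{m\mid(k,n)}Lie_{n/m}[p_m]$ of Corollary~\ref{cor6.12}, the only difference being that here $m$ runs over divisors of $n$ in an arithmetic progression rather than over common divisors with $k$. Using M\"obius inversion on the subposet of divisors of $n$ lying in the residue class $1\,\mathrm{mod}\, k$, one may attempt to write $f_n^T$ as a signed $\mathbb{Z}$-combination of the $\ell_n^{(r)}$ for appropriate $r$. Because each $\ell_n^{(r)}$ has the manifestly positive Schur expansion counting standard Young tableaux with prescribed major-index residue (Theorem~\ref{Foulkes}(2)), Schur positivity would then follow from a nonnegativity statement about refined counts of SYT by major index modulo $n$, a statement amenable in principle to a sign-reversing involution on tableaux.

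A complementary route is to realize $H[F^T]$ in degree $n$ directly as a genuine $S_n$-module. A routine calculation shows that this putative module has dimension $n!$ for every $k$, since only $p_{(1^n)}$ contributes to the value at the identity; for $k=1$ it \emph{is} the regular representation, and for $k=2$ it coincides with the degree $n$ part of $E[C\!onj]$ from Proposition~\ref{ExtLieConj}(2). A natural candidate construction uses the roots-of-unity filter: with $\omega=e^{2\pi i/k}$,
\[
\sum_{n\equiv 1\,\mathrm{mod}\, k} p_n=\frac{1}{k}\sum_{j=0}^{k-1}\omega^{-j}\sum_{n\geq 1}\omega^{jn}p_n,
\]
which, when inserted into the logarithmic identities of Proposition~\ref{Su1Prop3.1} applied to $F^T$, decomposes $\log H[F^T]$ into an average of $k$ twisted copies of $\log H[C\!onj]$. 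Interpreting each twisted factor as coming from a character of the wreath product $C_k\wr S_n$ and then restricting to $S_n$ may produce the module directly.

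The principal obstacle is that, for $k\geq 3$, the set $T$ is not multiplicatively closed, not closed under taking divisors, and, crucially, is \emph{not} of the form $P(S)$ for any set of primes $S$; hence neither the formulas of Section~\ref{SecPrimes} nor the prime-power framework of Theorem~\ref{SymAltSymLS} applies. Correspondingly, $\tilde f_n^T(-1)$ admits no clean formula analogous to Lemma~\ref{LSnegvalues}, and the exterior-power identities that established Schur positivity for prime-indexed sets do not yield a manifestly positive expression here. The hard part will be to locate a new positivity mechanism---whether a combinatorial sign-reversing involution refining the major-index statistic modulo $n$, or an explicit uniform construction of the conjectured $n!$-dimensional module---that treats all residue classes $k$ simultaneously.
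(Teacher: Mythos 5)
This statement is an open conjecture, not a theorem of the paper: the paper records Stanley's conjecture, notes his verification for $n\leq 24$, $k\leq 6$, and proves only the case $k=2$ (via $\prod_{n \text{ odd}}(1-p_n)^{-1}=E[C\!onj]=H[Lie^{\overline{(2)}}]$, i.e.\ Theorem~\ref{Spositivity1}(2) with $S$ the set of odd primes). Your proposal is therefore correctly framed as a plan rather than a proof, and to your credit it ends by naming the missing ingredient. But that missing ingredient \emph{is} the entire content of the conjecture: neither of your two routes closes the gap. The reduction to Foulkes characters via M\"obius inversion on the divisor poset does express $f_n^T$ as a signed integer combination of the $\ell_n^{(r)}$ (this is essentially inverting Corollary~\ref{cor6.12}), but the resulting alternating sum of SYT counts by major-index residue has no known sign-reversing involution, and in any case positivity of $F^T$ is, as you yourself observe, potentially stronger than positivity of $H[F^T]$ --- so even a successful involution argument might be attacking a false intermediate statement. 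The roots-of-unity filter likewise produces complex-coefficient twists of $\log H[C\!onj]$ whose interpretation as a genuine $C_k\wr S_n$-character is asserted hopefully rather than established. So the proposal correctly assembles the paper's own reductions (Proposition~\ref{prop6.14}, Corollary~\ref{cor6.5}, Corollary~\ref{cor6.12}) and correctly diagnoses why the prime-indexed machinery of Section~\ref{SecPrimes} does not apply, but supplies no new positivity mechanism.

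One factual correction: you claim that for $k\geq 3$ the set $T=\{n: n\equiv 1 \bmod k\}$ is not multiplicatively closed. It is multiplicatively closed ($m\equiv 1$ and $n\equiv 1$ give $mn\equiv 1 \bmod k$). What fails is closure under divisors (e.g.\ $2\mid 4$ but $2\not\equiv 1 \bmod 3$), which is why $T$ is not of the form $P(S)$ and why Theorem~\ref{SymAltSymLS} does not apply; your conclusion stands but the stated reason should be fixed. Your dimension count ($n!$ in each degree, since only $\lambda=(1^n)$ contributes at the identity) and the identifications at $k=1,2$ are correct.
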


As noted in equation (1) of Corollary \ref{Spositivity2} and other places, Conjecture~\ref{RPS2015} holds  for $k=2.$  We  have 
$$ \prod_{n\equiv 1\,\mathrm{mod}\, 2} (1-p_n)^{-1} =E[Conj]=H[Lie^{\overline{{(2)}}}].$$ In this case, writing $p^{\mathrm{ odd}}$ for 
$\sum_{n\, \mathrm{odd}} p_n, $ we have the identity
$$ p^{\mathrm {odd}}[Lie]=Lie^{\overline{{(2)}}},$$ and hence:

\begin{prop}\label{prop6.15}  $\sum_{\stackrel{m\,\mathrm{odd}}{ m|n}} Lie_{\frac{n}{m}}[p_m]=p^{\mathrm {odd}}[Lie]\vert_{{\rm deg\ } n}$ is  Schur positive; it is the representation   $Lie_n^{\overline{{(2)}}}=\exp(2i\pi\ell/n)\uparrow_{C_n}^{S_n},$ where 
$n=2^k\cdot\ell$ and $\ell$ is odd.
\end{prop}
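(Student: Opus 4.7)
The plan is to identify $f_n^T$, for $T$ the set of odd positive integers, with the symmetric function $Lie_n^{\overline{(2)}}$ of Definition~\ref{SetofPrimes}, and then read off Schur positivity from the fact that the latter is by construction the Frobenius characteristic of a genuine induced representation of $S_n$.

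First I would specialise Theorem~\ref{SymAltSymLS} to the set of primes $S=\{2\}$. By Definition~\ref{SetofPrimes}, the complementary set $\bar S$ consists of all odd primes, and $P(\bar S)$ is precisely the set of positive integers all of whose prime factors are odd, i.e.\ the odd positive integers. Thus equation~\eqref{SymLS} applied to $\bar S$ reads
\begin{equation*}
H[L^{\overline{(2)}}]=\prod_{n\text{ odd}}(1-p_n)^{-1}.
\end{equation*}
Second, I would apply Theorem~\ref{thm6.4} to $T=\{n:n\text{ odd}\}$. Equation~\eqref{thm6.4eqn1} gives $H[F^T]=\prod_{n\in T}(1-p_n)^{-1}=\prod_{n\text{ odd}}(1-p_n)^{-1}$, and equation~\eqref{thm6.4eqn2} gives the explicit degree-$n$ identity $f_n^T=\sum_{m\text{ odd},\,m\mid n}Lie_{n/m}[p_m]=p^{\mathrm{odd}}[Lie]\bigl|_{\deg n}$.

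Combining these, $H[L^{\overline{(2)}}]=H[F^T]$, and both $L^{\overline{(2)}}$ and $F^T$ are symmetric function series with no constant term. Since $H-1$ is invertible with respect to plethysm, as already invoked in the proof of Proposition~\ref{prop6.7}, this forces $L^{\overline{(2)}}=F^T$; matching degree-$n$ components yields
\begin{equation*}
Lie_n^{\overline{(2)}}=\sum_{\substack{m\text{ odd}\\ m\mid n}}Lie_{n/m}[p_m]=p^{\mathrm{odd}}[Lie]\bigl|_{\deg n},
\end{equation*}
which is the claimed equality. Schur positivity follows at once, because by Definition~\ref{SetofPrimes} the left-hand side $Lie_n^{\overline{(2)}}$ is the characteristic of the induced module $\exp(2\pi i\ell/n)\uparrow_{C_n}^{S_n}$ with $n=2^k\ell$, $\ell$ odd.

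There is no real obstacle here beyond the verification that $P(\overline{\{2\}})$ coincides with the odd positive integers, which is immediate from the definitions. The argument is essentially a translation between the two parallel constructions of Section~\ref{SecPrimes} and Section~\ref{SecfnT}, tied together by the plethystic invertibility of $H-1$; everything substantive has already been done in Theorems~\ref{SymAltSymLS} and~\ref{thm6.4}.
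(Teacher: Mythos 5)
Your proposal is correct and takes essentially the same route as the paper: the paper's one-line proof likewise observes that the two modules have coinciding symmetric powers, both equal to $\prod_{n\ \mathrm{odd}}(1-p_n)^{-1}$ (from Theorem~\ref{SymAltSymLS} applied to the odd primes and Theorem~\ref{thm6.4} applied to $T$ the odd integers), and concludes they are equal. Your write-up merely makes explicit the appeal to the plethystic invertibility of $H-1$, which the paper leaves implicit.
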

\begin{proof} This is clear since the symmetric powers of the two modules coincide, both being equal to  $\prod_{n\, \mathrm{ odd}}(1-p_n)^{-1}.$
\end{proof}

Proposition \ref{prop6.6} gives us several different ways to decompose $C\!onj_n$, which we collect in the following:

\begin{thm}\label{thm6.16} 
 For any prime $q,$ we have 
\begin{equation}\label{eqn6.17} \sum_n C\!onj_n=\sum_{\stackrel{n}{ q \text{ does not divide } n}} p_n[Lie^{(q)}],
\end{equation}
and hence the sum on the right is Schur positive.
In fact for any positive integer $q$ we have 
\begin{equation}\label{eqn6.18} \sum_n C\!onj_n=\sum_{\stackrel{n}{q \text{ does not divide } n}} p_n[\sum_{k\geq 0} Lie[p_{q^k}]].
\end{equation}

\end{thm}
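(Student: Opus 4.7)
The plan is to derive both identities directly from the decomposition $\sum_{m\ge 1} p_m[Lie]=\sum_n C\!onj_n$ of equation \eqref{prop6.6eqn1}, by partitioning the index set of $m$ according to the $q$-adic valuation. Fix an integer $q\ge 2$. Every positive integer $m$ factors uniquely as $m=nq^k$ where $k\ge 0$ is the largest integer with $q^k\mid m$ and $n$ is the cofactor, so $q\nmid n$. Hence
\[
\sum_{m\ge 1} p_m[Lie]\;=\;\sum_{\substack{n\ge 1\\ q\nmid n}}\ \sum_{k\ge 0} p_{nq^k}[Lie].
\]

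Next I would push the inner sum through $p_n$ using two standard plethystic facts. First, $p_{nq^k}=p_n[p_{q^k}]$, so associativity of plethysm gives $p_{nq^k}[Lie]=p_n\!\left[p_{q^k}[Lie]\right]$. Second, for any symmetric function $f$ one has $p_r[f]=f[p_r]$ (both sides are $\mathbb Q$-linear in $f$ and agree on the power-sum basis since $p_r[p_\mu]=p_{r\mu}=p_\mu[p_r]$). Applying this with $r=q^k$ and $f=Lie$ yields $p_{q^k}[Lie]=Lie[p_{q^k}]$. Finally, plethysm with the power sum $p_n$ on the outside is a ring homomorphism (it is the substitution $x_i\mapsto x_i^n$), so it is additive in its argument, and the inner sum can be pulled out:
\[
\sum_{k\ge 0} p_n\!\left[Lie[p_{q^k}]\right]\;=\;p_n\!\left[\,\sum_{k\ge 0} Lie[p_{q^k}]\,\right].
\]
Combining these steps proves \eqref{eqn6.18}. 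When $q$ is prime, equation \eqref{prop6.6eqn4a} identifies $\sum_{k\ge 0} Lie[p_{q^k}]=Lie^{(q)}$, which immediately specialises \eqref{eqn6.18} to \eqref{eqn6.17}. Schur positivity of the right-hand sum is then automatic, since by construction it equals $\sum_n C\!onj_n$, the Frobenius characteristic of the conjugacy permutation representations.

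The manipulations are all routine applications of known plethystic identities, so there is no real obstacle; the only point where care is required is ensuring the unique factorization $m=nq^k$ (with $k$ maximal) holds for all $q\ge 2$, not just prime $q$, which is what allows \eqref{eqn6.18} to be stated in the stronger generality than \eqref{eqn6.17}. Note that $q=1$ must be excluded, as then the index set $\{n:q\nmid n\}$ is empty.
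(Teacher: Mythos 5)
Your proof is correct and takes essentially the same route as the paper's: both start from \eqref{prop6.6eqn1}, establish \eqref{eqn6.18} first for arbitrary $q\ge 2$ by reorganising the sum over $m$ according to $q$-adic valuation, and then specialise to prime $q$ via \eqref{prop6.6eqn4a} to get \eqref{eqn6.17}. The only cosmetic difference is that the paper packages this regrouping as an application of the plethystic-inverse pair $p_1-p_q$ and $\sum_{k\ge 0}p_{q^k}$ from Proposition~\ref{Pleth1} together with associativity, whereas you unwind the same telescoping directly from the unique factorisation $m=nq^k$; your closing remark that $q=1$ must be excluded is a fair caveat about the theorem's phrasing rather than a gap.
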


\begin{proof} We start with equation (\ref{prop6.6eqn1}) of Proposition \ref{prop6.6}, and use the fact that 
$(p_1-p_q)$ and $\sum_{k\ge 0} p_{q^k}$ are plethystic inverses (see Proposition~\ref{Pleth1} in the next section).
We have, by associativity of plethysm,
\begin{align*} \sum_{n\geq 1} C\!onj_n&= \sum_{n\geq 1} p_n[Lie]=\sum_{n\geq 1} (p_n[p_1-p_q] )[\sum_{k\geq 0} p_{q^k}[Lie]]\\
&=\sum_{n\geq 1} (p_n-p_{nq} )[\sum_{k\geq 0} p_{q^k}[Lie]]=(\sum_{n\geq 1} p_n -\sum_{n\geq 1} p_{nq})[\sum_{k\geq 0} Lie[p_{q^k}]];
\end{align*}
invoking Theorem \ref{thm6.4}, this establishes equation (\ref{eqn6.18}).\end{proof}

\begin{rk}  In   { \cite{SuLieSup2}}, it was conjectured that $Lie_n^{(2)}\uparrow_{S_n}^{S_{n+1}}-Lie_{n+1}^{(2)}$ is a true $S_{n+1}$ module which lifts $Lie_n^{(2)},$ if $n$ is not a power of 2.
One can ask if this holds for the odd primes $q.$

If $n$ is a power of an odd prime $q,$ it follows that 
$Lie_n^{(q)}={\rm ch\,} \textbf{ 1}\uparrow_{C_n}^{S_n}= C\!onj_n,$ while $Lie_{n-1}^{(q)}=Lie_n$ since $n-1\equiv q-1 \mod q,$ so that $n-1$ is relatively prime to $q.$ Now $Lie_n$ does not contain the sign representation (for $n\neq 2$) and never contains the trivial representation; however both appear once in the conjugacy action on the $n$-cycles, for odd $n,$ and hence both  appear with multiplicity $(-1)$ in $p_1 Lie_{n-1}^{(q)}- Lie_n^{(q)}.$ 

For $q=3$ we have verified, up to $n=32,$ that $p_1 Lie_{n-1}^{(3)}-Lie_n^{(3)}$ is Schur positive except for $n=3,6,9,10, 18, 27.$

For $q=5, n\leq 32,$ $p_1 Lie_{n-1}^{(5)}-Lie_n^{(5)}$ is Schur positive except for $n=5,6,10, 25, 26.$
\end{rk}


\section{A compendium of plethystic inverses}\label{SecPleth}
This section contains some useful plethystic identities and manipulations.

\begin{prop}\label{Pleth1} We have 
\begin{enumerate}
\item For $q\geq 2,$ $p_1-p_q$ and $\sum_{k\geq 0}p_{q^k}$ are plethystic inverses. 
\item For $q\geq 2,$  $p_1+p_q$ and $\sum_{k\geq 0}(-1)^k p_{q^k}$ are plethystic inverses. 
\item For $q\ge 2, H[p_1-p_q]=\dfrac{H}{H[p_q]}.$
\item $\dfrac{H}{H[p_2]}=H[p_1-p_2]=E,$  and hence $(H-1)[p_1-p_2]=E-1.$
\item Suppose $F=\sum_{n\geq 1} f_n$ and $G=1+\sum_{n\geq 1} g_n$ are formal series of symmetric functions,  with $f_n, g_n$ being of homogeneous degree $n.$ Then $H[F]=G\iff G
=(\tfrac{H}{H[p_q]})[\sum_{k\ge 0} F[p_{q^k}]].$  In particular 
\[H[F]=G\iff  G=E[\sum_{k\ge 0} F[p_{2^k}]].\]
\end{enumerate}
\end{prop}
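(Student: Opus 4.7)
The plan is to prove the five parts in order using three basic facts about plethysm: inner plethysm is a ring homomorphism and outer plethysm is additive; $p_n[G] = G[p_n]$ for any symmetric function $G$ (in particular $p_n[p_m] = p_{nm}$); and associativity $(F[G])[K] = F[G[K]]$. The exponential formula $H = \exp\sum_{i\geq 1} p_i/i$ from~\eqref{defHE} takes care of Parts~(3) and~(4).

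For Part~(1), I would just compute the composition in both orders. Using outer additivity and $p_q[p_{q^k}] = p_{q^{k+1}}$, the sums
$$(p_1 - p_q)\Bigl[\sum_{k\geq 0} p_{q^k}\Bigr] = \sum_{k\geq 0} p_{q^k} - \sum_{k\geq 0} p_{q^{k+1}}, \qquad \Bigl[\sum_{k\geq 0} p_{q^k}\Bigr][p_1 - p_q] = \sum_{k\geq 0}\bigl(p_{q^k} - p_{q^{k+1}}\bigr)$$
both telescope to $p_1$. Part~(2) is the same computation with alternating signs attached. For Part~(3), applying the exponential formula together with $p_i[p_1 - p_q] = p_i - p_{iq}$ gives
$$H[p_1 - p_q] = \exp \sum_{i \geq 1} \frac{p_i - p_{iq}}{i} = \frac{\exp \sum_i p_i/i}{\exp \sum_i p_{iq}/i} = \frac{H}{H[p_q]}.$$
Part~(4) specializes to $q = 2$; after reindexing $\sum_{i\geq 1} p_{2i}/i = \sum_{j\text{ even}} 2 p_j/j$, the resulting exponent rearranges to $\sum_{i\text{ odd}} p_i/i - \sum_{i\text{ even}} p_i/i$, matching the exponent of $E$ from~\eqref{defHE}; the consequence $(H-1)[p_1-p_2]=E-1$ then follows since constants are fixed by plethysm.

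For Part~(5), the plan is to combine Parts~(1) and~(3) via associativity. Using Part~(3), the target expression becomes $(H[p_1 - p_q])\bigl[\sum_{k\geq 0} F[p_{q^k}]\bigr]$, and associativity pulls the outer $H$ through to yield $H\bigl[(p_1 - p_q)\bigl[\sum_{k\geq 0} F[p_{q^k}]\bigr]\bigr]$. Expanding the inner plethysm and using $p_q[F[p_{q^k}]] = (F[p_{q^k}])[p_q] = F[p_{q^{k+1}}]$ (the first equality from the commutation rule $p_n[G] = G[p_n]$, the second from associativity together with $p_{q^k}[p_q] = p_{q^{k+1}}$), the inner expression telescopes to $F[p_1] = F$, and the whole thing equals $H[F]$. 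This establishes the unconditional identity $(H/H[p_q])[\sum_{k\geq 0} F[p_{q^k}]] = H[F]$, from which the iff in Part~(5) is immediate; the $q = 2$ case follows by invoking Part~(4) in place of Part~(3).

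The main point requiring care is the telescoping inside the plethysm in Part~(5), which rests on the identity $p_q[F[p_{q^k}]] = F[p_{q^{k+1}}]$. Once this is pinned down from the commutation and associativity rules, the rest of the argument is bookkeeping.
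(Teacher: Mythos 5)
Your proposal is correct and follows essentially the same route as the paper: direct telescoping for Parts (1)--(2), the exponential formula for $H$ in Parts (3)--(4), and a combination of Parts (1) and (3) with associativity of plethysm and the commutation $p_n[G]=G[p_n]$ for Part (5). The only cosmetic difference is that in Part (5) you telescope inside the plethysm starting from the right-hand side, whereas the paper substitutes the inverse pair into $H$ first and then applies the result to $F$; the underlying identities are identical.
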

\begin{proof} Part (1) follows by calculating 
$$(p_1-p_q)[\sum_{k\geq 0}p_{q^k}]=\sum_{k\geq 0}p_{q^k}-
\sum_{k\geq 0}p_q[p_{q^k}]
=\sum_{k\geq 0}p_{q^k}-\sum_{k\geq 0}p_{q^{k+1}}=p_1.$$
Part (2) follows in the same manner.
For Part (3), use the exponential generating function ~\eqref{defHE} for $H.$ 
Then 
$$H[p_1-p_q]=\exp\sum_{i\geq 0} \frac{p_i}{i}[p_1-p_q]
=\exp\sum_{i\geq 0} \tfrac{1}{i}(p_1-p_q)[{p_i}]
=\frac{\exp\sum_{i\geq 0} \frac{p_i}{i}}{\exp\sum_{i\geq 0}\frac{p_{qi}}{i}}=\frac{H}{H[p_q]}.$$
Now Part (4) follows by observing that 
$\sum_{i\geq 0} \frac{p_i}{i}-\sum_{i\geq 0}\frac{p_{2i}}{i}
=\sum_{i\geq 0} (-1)^{i-1}\frac{p_i}{i}, $ and hence $H[p_1-p_2]=E, $ using ~\eqref{defHE} again.
 
 For Part (5), use Part (1) to rewrite 
 $H[F]=(H[p_1-p_q][\sum_{k\geq 0}p_{q^k}])[F],$ and then use  Part (3) and the fact that plethysm is associative.
 \end{proof}

\begin{prop}\label{Pleth3} Let $F$ and $G$ be two series of symmetric functions with no constant term.  Then  the following are equivalent:
\[H[F]=E[G]\iff E^{\pm}[F]=H^{\pm}[G]\iff F=G-G[p_2]\iff G=\sum_{k\geq 0} F[p_{2^k}].\]
In particular, any  symmetric power of modules is also expressible as an exterior power of modules.  Also, if $F$ is Schur positive, then so is $G-G[p_2].$
\end{prop}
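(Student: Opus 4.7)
The plan is to deduce all four equivalences from Proposition \ref{Pleth1}, using only associativity of plethysm and the fact that $H-1$ (equivalently, any series of symmetric functions with constant term $0$ and nonzero degree-one part) is plethystically invertible.

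For the equivalence $H[F]=E[G] \iff F = G - G[p_2]$, I would start from Proposition \ref{Pleth1}(4), which says $H[p_1-p_2]=E$. Composing on the right with $G$ and using associativity of plethysm yields
\[
E[G] \;=\; \bigl(H[p_1-p_2]\bigr)[G] \;=\; H\bigl[(p_1-p_2)[G]\bigr] \;=\; H\bigl[G - G[p_2]\bigr].
\]
So $H[F] = E[G]$ is the same as $H[F] = H[G-G[p_2]]$; since both $F$ and $G-G[p_2]$ have no constant term and $H-1$ is plethystically invertible, we may cancel $H$ to conclude $F = G - G[p_2]$. Conversely, applying $H[\cdot]$ to $F=G-G[p_2]$ returns $H[F]=E[G]$.

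The equivalence $F=G-G[p_2] \iff G = \sum_{k\geq 0} F[p_{2^k}]$ is a direct application of Proposition \ref{Pleth1}(1) with $q=2$: since $p_1-p_2$ and $\sum_{k\geq 0}p_{2^k}$ are plethystic inverses, writing $F=(p_1-p_2)[G]$ is equivalent to $G = \bigl(\sum_{k\geq 0} p_{2^k}\bigr)[F] = \sum_{k\geq 0} F[p_{2^k}]$, the last equality coming from the fact that $p_n[F]=F[p_n]$.

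For $H[F]=E[G] \iff E^\pm[F]=H^\pm[G]$, I would use the classical identities $H\cdot E^\pm = 1$ and $E\cdot H^\pm=1$ (obtained by specializing the generating function identity $H(t)E(-t)=1$ at $t=\pm 1$). Plethysm with a fixed inner argument is a ring homomorphism, hence $H[F]\cdot E^\pm[F] = 1$ and $E[G]\cdot H^\pm[G]=1$. Taking reciprocals of the identity $H[F]=E[G]$ then yields $E^\pm[F]=H^\pm[G]$, and conversely. The two ``in particular'' consequences are immediate: given any $F$ we may define $G := \sum_{k\geq 0} F[p_{2^k}]$ and then $H[F]=E[G]$, so every symmetric power is also an exterior power; and the identity $F = G - G[p_2]$ shows directly that Schur positivity of $F$ is equivalent to Schur positivity of $G - G[p_2]$. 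I do not expect any real obstacle: every step is a syntactic consequence of facts already recorded in Proposition \ref{Pleth1}, and the only point to be careful about is checking the hypotheses for cancelling $H$, namely that the two series being compared have zero constant term.
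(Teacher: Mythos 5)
Your proposal is correct and follows essentially the same route as the paper: both arguments rest on Proposition \ref{Pleth1} (the identity $H[p_1-p_2]=E$ and the inverse pair $p_1-p_2$, $\sum_{k\ge 0}p_{2^k}$), associativity of plethysm, and the plethystic invertibility of $H-1$ and $E-1$. The only cosmetic difference is that you rederive the first equivalence from $H(t)E(-t)=1$ where the paper cites Lemma \ref{pm}, and you obtain $G=\sum_{k\ge0}F[p_{2^k}]$ by applying the inverse pair directly to $F=(p_1-p_2)[G]$ rather than passing through $H$ again.
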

\begin{proof}  The equivalence of the first two statements follows  from Lemma \ref{pm}.  We have $(H-1)[F]=(E-1)[G],$ and hence, using associativity of plethysm, 
$F=((H-1)^{\langle-1\rangle}[E-1])[G],$
$G=((E-1)^{\langle-1\rangle}[H-1])[F].$  

Using Proposition~\ref{Pleth1}, we have 
$H[F]=E[G]=(H[p_1-p_2])[G]$ and hence $F= (p_1-p_2)[G]=G-G[p_2].$

Similarly, using the plethystic inverse of $p_1-p_2,$ we have 
$H[F]=H[p_1[F]]=(H[p_1-p_2])([\sum_{k\geq 0} p_{2^k}[F]])
=E[\sum_{k\geq 0} p_{2^k}[F]]=E[\sum_{k\geq 0} F[p_{2^k}]],$ and hence $G=\sum_{k\geq 0} F[p_{2^k}].$  In both cases we use the fact that $H-1, E-1$ are invertible with respect to plethysm.
\end{proof}

Equations $(A)$ and $(B)$ in the following example illustrate  Proposition \ref{Pleth3} and Corollary \ref{Spositivity2}.
\begin{ex}\label{DualityEx}  Let $S=\{2\}.$ 
\begin{equation*}
(1-p_1)^{-1}=H[Lie^\emptyset]=H[Lie]=E[Lie^{S}]=E[Lie^{(2)}],\qquad (A)
\end{equation*}
 from equations  (\ref{SymLS}) and Part (1) of Theorem \ref{PlInv-E}.

 Note in particular that Proposition \ref{Pleth3} gives $Lie=Lie^{(2)}-Lie^{(2)}[p_2].$

\vskip .05in
Now let $S$ be the set of odd primes. Then
\begin{equation*}
\prod_{ n {\rm \, odd}} (1-p_n)^{-1}=H[Lie^{\overline{\{2\}}}]=E[Lie^{\bar{\emptyset}}]
=E[Conj], \qquad (B)
\end{equation*}
 from Theorem \ref{SymLS} and Proposition \ref{ExtLieConj}. 
In this case, the preceding proposition asserts that  $Lie^{\overline{\{2\}}}=C\!onj-C\!onj[p_2];$ in particular the latter is Schur positive.
\end{ex}

For arbitrary $q$ we have:
\begin{prop}\label{Pleth4} Let $q\ge 2,$ and let $g_n=\dfrac{1}{n}\sum_{d|n} \psi(d) p_d^{\tfrac{n}{d}},$  $G=\sum_{n\geq 1} g_n$ and  
$F\!=\!(p_1\pm p_q)[G]=\sum_{n\geq 1} f_n.$ 

Then
$f_n=\dfrac{1}{n}\sum_{d|n} \bar{\psi}(d) p_d^{\frac{n}{d}},$
where 
$\bar\psi(d)=
\begin{cases} \psi(d)\pm q\psi(\frac{d}{q}), &q|d,\\
                    \psi(d), & \mathrm{otherwise}.
\end{cases}$
\end{prop}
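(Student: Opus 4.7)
The plan is to expand $(p_1 \pm p_q)[G]$ using the additivity and the associativity of plethysm, and then read off the coefficient of $p_d^{n/d}$ directly. Since plethysm distributes over sums in the first argument, we have
\[
F = (p_1 \pm p_q)[G] = p_1[G] \pm p_q[G] = G \pm p_q[G].
\]
The symmetric function $p_q[g_n]$ is obtained from $g_n$ by replacing each $p_i$ with $p_{qi}$, so it is homogeneous of degree $qn$, and concretely
\[
p_q[g_n] = \frac{1}{n}\sum_{d\mid n} \psi(d)\, p_{qd}^{\,n/d}.
\]

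Next I extract the degree-$N$ component of $F$. The term $G$ contributes $g_N$, and the term $p_q[G]$ contributes $p_q[g_{N/q}]$ precisely when $q\mid N$ (otherwise its degree-$N$ component is zero). Hence
\[
f_N = \begin{cases} g_N \pm p_q[g_{N/q}], & q\mid N,\\ g_N, & q\nmid N.\end{cases}
\]
In the case $q\nmid N$, no divisor $d$ of $N$ is a multiple of $q$, so the formula $\bar\psi(d)=\psi(d)$ for every $d\mid N$ is consistent with the stated answer.

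For $q\mid N$, I reindex the sum in $p_q[g_{N/q}]$ via $d=qd'$. As $d'$ ranges over divisors of $N/q$, $d=qd'$ ranges bijectively over divisors of $N$ that are multiples of $q$, and $(N/q)/d' = N/d$. Substituting,
\[
p_q[g_{N/q}] = \frac{q}{N}\sum_{\substack{d\mid N\\ q\mid d}} \psi\!\left(\tfrac{d}{q}\right) p_d^{\,N/d}.
\]
Adding $g_N = \frac{1}{N}\sum_{d\mid N}\psi(d)\,p_d^{N/d}$ and combining coefficients of $p_d^{N/d}$ yields
\[
f_N = \frac{1}{N}\sum_{d\mid N} \bar\psi(d)\, p_d^{\,N/d},\qquad \bar\psi(d)=\begin{cases}\psi(d)\pm q\,\psi(d/q), & q\mid d,\\ \psi(d), &\text{otherwise},\end{cases}
\]
as claimed. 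There is no real obstacle: the argument is a direct reindexing after unwinding the definition of $p_q[\,\cdot\,]$; the only minor care required is to correctly identify which divisors of $N$ contribute to $p_q[g_{N/q}]$, namely those divisible by $q$.
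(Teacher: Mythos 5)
Your proof is correct. It takes a different (though closely related) route from the paper: the paper invokes Proposition \ref{Su1Prop3.1} to write $G=\log\prod_{d\geq 1}(1-p_d)^{-\psi(d)/d}$, computes $F=G\mp G[p_q]$ as a quotient or product of such infinite products, and then reads off $\bar\psi$ from the resulting exponents $-\frac{\psi(d)}{d}$ on $(1-p_d)$ for $q\nmid d$ and $-\frac{\psi(mq)}{mq}\mp\frac{\psi(m)}{m}$ on $(1-p_{mq})$. You instead work directly at the level of the power-sum expansion, using $p_q[g_n]=g_n[p_q]$ (the substitution $p_i\mapsto p_{qi}$) and the reindexing $d=qd'$ of divisors; the key bijection between divisors of $N/q$ and divisors of $N$ divisible by $q$ is the same combinatorial content that the paper hides in the exponent bookkeeping. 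Your argument is self-contained and handles both signs uniformly, at the cost of being slightly longer; the paper's is shorter because Proposition \ref{Su1Prop3.1} is already on the table and is reused throughout Section \ref{SecfnT}. Both are complete proofs.
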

\begin{proof} We will do the case $F=(p_1-p_q)[G],$ since the other case is identical. Note that $F=(p_1-p_q)[G]= G-p_q[G]=G-G[p_q].$ 
It is easiest to use Proposition \ref{Su1Prop3.1}:
 \begin{align*}
F &=\log \prod_{d\geq 1} (1- p_d)^{-\frac{\psi(d)}{d}}
-\log \prod_{d\geq 1} (1- p_{dq})^{-\frac{\psi(d)}{d}}\\
&=\log \dfrac{\prod_{d\geq 1} (1- p_d)^{-\frac{\psi(d)}{d}}}
{\prod_{d\geq 1} (1- p_{dq})^{-\frac{\psi(d)}{d}}}
=\log \left(\prod_{d\geq 1,\, q \not{|} d} (1- p_d)^{-\frac{\psi(d)}{d}} \prod_{m\geq 1} (1- p_{mq})^{-\frac{\psi(mq)}{mq}+\frac{\psi(m)}{m}}\right)
\end{align*}
\end{proof}

We collect the plethystic inverses from this paper and from the literature that are of  representation-theoretic  and homological significance, and indeed, were first derived in those contexts. If $g$ is a symmetric function without constant term, and with nonzero term of degree 1, we denote by $g^{\langle-1\rangle} $ the plethystic inverse of $g. $ Equations (\ref{Pleth8a}) and (\ref{Pleth8b}) first appeared in \cite{CHR} and \cite{CHS} respectively.   
\begin{prop}\label{Pleth8}   The following pairs are plethystic inverses:
\begin{equation}\label{Pleth8a}
 \dfrac{p_1}{1+p_1} {\rm\ and\ } \dfrac{p_1}{1-p_1} ;\end{equation}
\begin{equation}\label{Pleth8b}\sum_{n\geq 1} g(n) p_n {\rm\ and\ }\sum_{n \geq 1}g(n) \mu(n) p_n, \end{equation}
for any function $g(n)$ defined on the positive integers, such that $g(mn)=g(m)g(n);$ 
\begin{equation}\label{Pleth8c}\sum_{n\text{ odd}} g(n) p_n {\rm\ and\ }\sum_{n\text{ odd}} g(n) \mu(n) p_n, \end{equation}
for any function $g(n)$ defined on the positive integers, such that $g(mn)=g(m)g(n);$ 
 \begin{equation}\label{Cadogan} \sum_{i\geq 1} (-1)^{i-1} \omega(Lie_i) \text{ and } H-1; \end{equation}  
 \begin{equation}\label{Sheila} \sum_{i\geq 1} (-1)^{i-1} \omega(Lie_i^{(2)})
 \text{ and } E-1; \end{equation}
\begin{equation} \label{Pleth8d}
\sum_{n\geq 1} Lie_n  {\rm\ and\ }\dfrac{H-1}{H}=\sum_{n\geq 1} (-1)^{n-1} e_n;
\end{equation}
\begin{equation} \label{Pleth8e}
\sum_{n\geq 1} Lie_n^{(2)} {\rm\ and\ } \dfrac{E-1}{E}=\sum_{n\geq 1} (-1)^{n-1} h_n,
\end{equation}
and hence $\omega(Lie_n^{\langle-1\rangle})
=(Lie_n^{(2)})^{\langle-1\rangle}.$

\begin{equation}\label{Pleth8f}
\sum_{n\equiv 1 {\rm\, mod\, }k} h_n {\rm\ and\ }\sum_{n\geq 0} (-1)^n \beta_{nk+1}; 
\end{equation}
here $\beta_{nk+1} $ is the Frobenius characteristic of the homology representation of $S_{nk+1},$   for the pure Cohen-Macaulay set partition poset of $nk+1$ with block sizes congruent to 1 modulo $k$   \cite[Theorem 4.7]{CHR}, from which one deduces that
\begin{equation}\label{Pleth8f2} 
\sum_{n\equiv 1 {\rm\, mod\, }k} e_n {\rm\ and\ }\sum_{n\geq 0} (-1)^n \omega(\beta_{nk+1}) ; 
\end{equation}
when $k$ is \textbf{even}, and in particular when $\mathbf{k=2},$ and  $\beta_{nk+1} $ is as above.
\begin{equation}  \label{Pleth8g}
\sum_{n\geq 0} \eta_{2n+1} {\rm\ and\ }  \sum_{m\geq 1} (-1)^{m-1} \dfrac{\partial}{\partial p_1} \beta_{2m};
\end{equation}
here $\eta_{n}$ is the Frobenius characteristic of the $S_n$-action on the multilinear component of the free Jordan algebra on $n$ generators \cite[Proposition 3.5]{CHS}, while $\beta_{2m}$ is the homology representation of $S_{2m}$ for the set partition poset of ${2m}$ with even block sizes \cite[Theorem 4.7]{CHR}, from which one deduces that

\begin{equation} \label{Pleth8g2}
\sum_{n\geq 0} \omega(\eta_{2n+1}) {\rm\ and\ }  \sum_{m\geq 1} (-1)^{m-1} \dfrac{\partial}{\partial p_1} \omega(\beta_{2m}).
\end{equation}

\end{prop}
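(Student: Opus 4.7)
The plan is to dispatch each inverse pair using identities already established in the paper, together with the elementary relation $p_m[p_n]=p_{mn}$.

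Items (\ref{Pleth8a})--(\ref{Pleth8c}) are direct.  For (\ref{Pleth8a}), both sides are formal power series in $p_1$; since $p_1$ is the plethystic identity, the composition reduces to the classical formal-power-series inversion of $t/(1\pm t)$.  For (\ref{Pleth8b}), one computes
\[
\Bigl(\sum_{m\ge 1} g(m)p_m\Bigr)\Bigl[\sum_{n\ge 1} g(n)\mu(n)p_n\Bigr]
\;=\; \sum_{d\ge 1} g(d)\,p_d \sum_{n\mid d}\mu(n) \;=\; g(1)p_1 \;=\; p_1,
\]
using $p_m[p_n]=p_{mn}$, multiplicativity of $g$, and Möbius inversion; the reverse composition is identical.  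Equation (\ref{Pleth8c}) repeats this calculation verbatim, because the odd positive integers are closed under multiplication and contain $1$, so the inner Möbius sum is still supported on divisors of $d$.

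Items (\ref{Cadogan}) and (\ref{Sheila}) are direct reformulations of results already proved: Cadogan's formula in Theorem~\ref{ThrallPBWCadoganSolomon} says $H[\sum_{i\ge 1}(-1)^{i-1}\omega(Lie_i)]=1+p_1$, which is precisely the assertion that this alternating sum is the plethystic inverse of $H-1$.  Part~(\ref{PlInv-ELieSup2}) of Theorem~\ref{PlInv-E} gives (\ref{Sheila}) in the same way, with $E$ in place of $H$ and $Lie_n^{(2)}$ in place of $Lie_n$.

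Items (\ref{Pleth8d}) and (\ref{Pleth8e}) follow from Thrall's formula $H[Lie]=(1-p_1)^{-1}$ and its $Lie^{(2)}$-analogue $E[Lie^{(2)}]=(1-p_1)^{-1}$ from Theorem~\ref{PlInv-E}(\ref{ExtLieSup2Reg}).  Taking multiplicative reciprocals in the ring of symmetric functions (using $H\cdot E^{\pm}=1=E\cdot H^{\pm}$) gives $E^{\pm}[Lie]=1-p_1$ and $H^{\pm}[Lie^{(2)}]=1-p_1$, hence
\[
\Bigl(\tfrac{H-1}{H}\Bigr)[Lie]=(1-E^{\pm})[Lie]=p_1,\qquad
\Bigl(\tfrac{E-1}{E}\Bigr)[Lie^{(2)}]=(1-H^{\pm})[Lie^{(2)}]=p_1.
\]
This establishes one side of each plethystic inverse relationship; the reverse side is automatic because, for formal series with vanishing constant term and linear part $p_1$, a one-sided plethystic inverse is two-sided.

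Finally, equations (\ref{Pleth8f}) and (\ref{Pleth8g}) are the theorems of \cite{CHR} and \cite{CHS} respectively, reproduced here for the reader's convenience and requiring no new argument.  To deduce (\ref{Pleth8f2}) from (\ref{Pleth8f}), and (\ref{Pleth8g2}) from (\ref{Pleth8g}), one applies the involution $\omega$ to both sides of the inverse relationship, taking care that $\omega$ does not in general commute with plethysm: the signs from $\omega(p_j)=(-1)^{j-1}p_j$ must cancel across the composition.  The parity restriction \emph{$k$ even} in (\ref{Pleth8f2}) is exactly what ensures that $nk+1$ is always odd, so that these signs do cancel; in (\ref{Pleth8g2}) the step is automatic because $2n+1$ is odd by construction.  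This parity bookkeeping for the $\omega$-transfer is the only delicate point; every other item in the proposition is an immediate consequence of identities already established earlier in the paper.
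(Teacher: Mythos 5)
Your proposal is correct and follows essentially the same route as the paper: direct computation with $p_m[p_n]=p_{mn}$ and Möbius inversion for the first three pairs, deducing (\ref{Pleth8d}) and (\ref{Pleth8e}) from $H[Lie]=(1-p_1)^{-1}$ and $E[Lie^{(2)}]=(1-p_1)^{-1}$ via the reciprocal relation $H\cdot E^{\pm}=1$ (which is exactly the content of Lemma~\ref{pm} that the paper invokes), citing \cite{CHR} and \cite{CHS} for (\ref{Pleth8f}) and (\ref{Pleth8g}), and transferring by $\omega$ using the fact that $\omega(f[g])=\omega(f)[\omega(g)]$ when every homogeneous component of $g$ has odd degree, which is precisely what the hypothesis that $k$ is even guarantees. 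The only cosmetic differences are that you treat (\ref{Cadogan}) and (\ref{Sheila}) explicitly (the paper leaves them as immediate restatements of Theorem~\ref{ThrallPBWCadoganSolomon} and Theorem~\ref{PlInv-E}) and that you rederive the needed half of Lemma~\ref{pm} inline rather than citing it.
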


\begin{proof}  Using the property of plethysm that  $p_n[f]=f[p_n]$ for all symmetric functions $f,$ the first three pairs are easily checked to be plethystic inverses  by direct computation and the defining property of the M\"obius function in the divisor lattice.

Equation (\ref{Pleth8d}) is well known  as a consequence of the acyclicity of the (sign-twisted) Whitney homology for the partition lattice e.g.\cite[Theorem 1.8 and Remark 1.8.1]{Su0}.  The derivation we give here shows that it is  in fact equivalent to  Thrall's decomposition  of the regular representation, namely  $H[Lie]=(1-p_1)^{-1}.$ Applying (1) of Lemma \ref{pm} to the latter identity immediately gives (\ref{Pleth8d}).  Similarly, (\ref{Pleth8e}) is equivalent to our decomposition $E[Lie^{(2)}]=(1-p_1)^{-1}$ of the regular representation, see (the first equation in) Theorem \ref{PlInv-E},  as can be seen by invoking (2) of Lemma \ref{pm}.

The inverse pair (\ref{Pleth8f}) is a result of \cite[Theorem 4.7 (a), p. 297]{CHR}.   It  also reflects the acyclicity of Whitney homology for the poset of partitions of a set of size $nk+1$ into blocks of size congruent to 1 modulo $k$  (see \cite[Example 3.7]{SuJer}). 
The pair (\ref{Pleth8f2}) follows by applying the involution $\omega$ to (\ref{Pleth8f}), since, when $k$ is even, all the symmetric functions involved have odd degree, and we have $\omega(f[g])
=\omega(f)[\omega[g]$ when $g$ is homogeneous of odd degree.  

Finally, the inverse pair (\ref{Pleth8g}) is a result of \cite[Proposition 3.5]{CHS}, with ~\eqref{Pleth8g2} following as in the preceding case.
\end{proof}

There are also interesting expressions for the plethystic inverse of the sum of the odd Lie representations $Lie_{2m+1}$, and of the alternating sum.  See \cite{SuOddLie}. 

We conclude with two questions.

\begin{qn} Is there a formula describing, for each partition  $\lambda,$ the irreducible decomposition of $H_\lambda[Lie_n^{S}],$ the higher $Lie_n^{S}$-module for arbitrary subsets $S$ of primes?   This is open for all $S,$   including  the classical $Lie$ case (Thrall's problem) when $S=\emptyset.$   Note that Theorem~\ref{Foulkes} gives a nice combinatorial formula, in terms of standard Young tableaux and the major index, for the irreducible decomposition of $Lie_n^S$ itself.
\end{qn}

\begin{qn}  A beautiful theorem of Gessel and Reutenauer states that the higher Lie module $H_\lambda[Lie_n]$ is the fundamental quasi-symmetric function corresponding to the conjugacy class indexed by $\lambda$ \cite[Theorem 3.6]{GR}. Is there an analogous theory  corresponding to $H_\lambda[Lie_n^{(q)}]$ or $E_\lambda[Lie_n^{(q)}]$  when $q$ is prime? The case $q=2$ is of particular interest in view of the curious properties of $Lie_n^{(2)}$ derived in \cite{SuLieSup2}.
\end{qn}



\bibliographystyle{amsplain.bst}

\end{document}